\titleformat{\subsection}[runin]{\normalfont\bfseries}{\thesubsection.}{.5em}{}[.~ ]
\titlespacing{\subsection}{0pt}{1.5ex plus .1ex minus .2ex}{0pt}
\theoremstyle{plain}
\newtheorem{thm}[equation]{Theorem}
\newtheorem{lemma}[equation]{Lemma}
\newtheorem{prop}[equation]{Proposition}
\theoremstyle{definition}
\newtheorem{defi}[equation]{Definition}
\newtheorem{rmk}[equation]{Remark}
\newcommand{\dd}{\mathrm{d}}
\newcommand{\B}{\mathrm{B}}
\newcommand{\G}{\mathrm{\mathrm{G}}}
\newcommand{\tr}{\mathrm{\mathrm{tr}}}
\newcommand{\T}{\mathrm{\mathrm{T}}}
\newcommand{\ta}{\mathrm{\mathrm{ta}}}
\newcommand{\Arg}{\mathrm{\mathrm{Arg}}}
\newcommand{\SU}{\mathrm{\mathrm{SU}}(2,1)}
\newcommand{\PU}{\mathrm{\mathrm{PU}}(2,1)}
\newcommand{\BV}{{\mathrm B}(V)}
\newcommand{\SV}{{\mathrm S}(V)}
\newcommand{\EV}{{\mathrm E}(V)}
\newcommand{\real}{\mathrm{\mathrm{Re}}}
\newcommand{\imag}{\mathrm{\mathrm{Im}}}
\newcommand{\PP}{\mathbb{P}}
\newcommand{\HH}{\mathbb{H}}
\newcommand{\CC}{\mathbb{C}}
\newcommand{\ZZ}{\mathbb{Z}}
\newcommand{\SP}{\mathbb{S}}
\newcommand{\RR}{\mathbb{R}}
\newcommand{\QQ}{\mathbb{Q}}
\newcommand{\DD}{\mathbb{D}}
\newcommand{\SL}{\mathrm{\mathrm{SL}}}
\newcommand{\Lin}{\mathrm{Lin}}
\title{Quotients of the holomorphic\/ $2$-ball and the turnover}
\author[$\dagger$]{Hugo C.~Bot\'os (corresponding author)\footnote{Supported by S\~ao Paulo Research Foundation (FAPESP).}} 
\affil[$\dagger$]{\small{Departamento de Matem\'atica Aplicada, IME, Universidade de S\~ao Paulo, S\~ao Paulo, Brasil\authorcr hugocbotos@usp.br}}
\author[$\dagger\dagger$]{Carlos H.~Grossi}
\affil[$\dagger\dagger$]{\small{Departamento de Matem\'atica, ICMC, Universidade de S\~ao Paulo, S\~ao Carlos, Brasil\authorcr grossi@icmc.usp.br}}
\date{}
\begin{document}

\maketitle

\begin{abstract}
We construct two-dimensional families of complex hyperbolic structures on disc orbibundles over the sphere with three cone points. This contrasts with the previously known examples of the same type, which are locally rigid. In particular, we obtain examples of complex hyperbolic structures on trivial and cotangent disc bundles over closed Riemann surfaces.
\end{abstract}

\smallskip

\noindent {\bf Keywords:} Complex hyperbolic geometry, Orbifolds, Orbibundles, Euler number, Toledo invariant, $\mathrm{PU}(2,1)$-representations of surface groups, Character varieties.

\smallskip

\noindent {\bf Mathematics Subject Classification:} 57S30 (Primary), 51M10, 57M50, 57R18 (Secondary).

\section{Introduction}

In this paper, we deal with complex hyperbolic Kleinian groups in complex dimension $2$, that is, discrete holomorphic isometry groups of the complex hyperbolic plane $\HH^2_\CC$. There are few known examples of such groups and a comprehensive survey can be found in \cite{kap2}. 

The complex hyperbolic plane is the holomorphic ball $$\HH_\CC^2\coloneq \{[z_0:z_1:z_2] \in \PP_{\CC}^2: -|z_0|^2+|z_1|^2+|z_2|^2<0\}$$ in the complex projective plane $\PP_\CC^2$. For the Hermitian form $\langle z,w \rangle\coloneq -z_0\overline{w_0}+z_1\overline{w_1}+z_2\overline{w_2}$ on $\CC^3$ we obtain the group $$\SU\coloneq \{A \in \SL(3,\CC): \langle Az,Aw \rangle=\langle z,w \rangle\}$$ of unitary transformations in $\SL(3,\CC)$ and its projectivization $$\PU = \SU/\{1,\exp(2\pi i/3),\exp(-2\pi i/3)\}.$$ The group $\PU$ is the group of biholomorphisms of the holomorphic ball $\HH_\CC^2$. The complex hyperbolic plane is a Kähler manifold, its symplectic form is denoted by $\omega$, and the discrete subgroups of $\PU$ are called complex hyperbolic Kleinian groups. For details, see Section \ref{section preliminaries}.

We are interested in constructing disc orbibundles over closed hyperbolic $2$-orbifolds with the total space being complex hyperbolic. 
\begin{rmk} \label{remark disc orbibundle} Consider a co-compact Fuchsian group $\Gamma$ and an open disc $\DD$. If $\Gamma$ acts on $\HH_\CC^1 \times \DD$, where $\HH_\CC^1$ is the Poincaré disc and for each $g \in \Gamma$ we have $g(z,f) = (gz, a(z,g)f)$ and $(z,f) \mapsto a(z,g)f$ smooth, then the map $\zeta: (\HH_\CC^1 \times \DD)/\Gamma \mapsto \HH_\CC^1/\Gamma$ is a good disc orbibundle, which we call orbigoodle (see Definition \ref{definition orbibundle}). We say that such disc orbibundle is complex hyperbolic if there exists a complex hyperbolic Kleinian group $\Gamma'$ such that $(\HH_\CC^1 \times \DD)/\Gamma$ is diffeomorphic to $\HH_\CC^2/\Gamma'$, implying that $\Gamma$ and $\Gamma'$ are isomorphic. Thus, for each complex hyperbolic disc orbibundle constructed, we obtain a discrete faithful representation in $\hom(\Gamma, \PU)$. Considering complex hyperbolic disc orbibundles up to holomorphic isomorphisms, we obtain points on the $\PU$-character variety $\hom(\Gamma, \PU)/\PU$ for $\Gamma$, the space of representations modulo $\PU$-conjugations. 

A disc orbibundle $(\HH_\CC^1 \times \DD)/\Gamma \to \HH_\CC^1/\Gamma$ has two natural topological invariants. If we think of $\HH_\CC^1/\Gamma$ as a section $S$ embedded in $(\HH_\CC^1 \times \DD)/\Gamma$, then we have the tangent orbibundle $TS \to S$ and the normal orbibundle $NS \to S$. The Euler number of the vector orbibundle is the integral of its Euler class. We obtain from $TS$ the Euler characteristic $\chi$ of $S\simeq \HH_\CC^1/\Gamma$ and from $NS$ the Euler number $e$ of the disc orbibundle. The computation of the Euler number in this work is done via an adaptation of the Poincaré-Hopf theorem (see Definition \ref{Def euler number}) developed in work \cite{bot}. Whenever the total space admits a complex hyperbolic structure, the Toledo invariant $\tau\coloneq  \frac{4}{2\pi i} \int_S \omega$ is a third discrete invariant (see Definition \ref{def toledo}).
\end{rmk}

To the authors' knowledge, previously known examples of complex hyperbolic disc orbibundles over closed hyperbolic $2$-orbifolds are found in \cite{GKL}, \cite{discbundles}, and \cite{agu}.

The complex hyperbolic Kleinian groups we construct here resemble those in \cite{discbundles} as they arise from discrete faithful representations of the turnover group
$$G(n_1,n_2,n_3)\coloneq \langle g_1,g_2,g_3\mid g_1^{n_1}=g_2^{n_2}=g_3^{n_3}=1{\text{ \rm and }}g_3g_2g_1=1\rangle$$
in the group $\PU$ of biholomorphisms of $\HH_\CC^2$. 
The turnover group $G(n_1,n_2,n_3)$ is the fundamental group for $\SP^2(n_1,n_2,n_3)$, the $2$-sphere with three conic points with angles $\frac{2\pi}{n_1}$, $\frac{2\pi}{n_2}$, $\frac{2\pi}{n_3}$, and it is a Fuchsian group when the Euler characteristic $\chi(\SP^2(n_1,n_2,n_3))=-1+\frac{1}{n_1}+\frac{1}{n_2}+\frac{1}{n_3}$ is negative (see Section \ref{turnover definition}).
These discrete faithful representations lead to orbibundles over hyperbolic spheres with three cone points or, up to finite cover, to disc bundles over closed Riemann surfaces, because every co-compact Fuchsian group admits a finite index torsion-free subgroup due to Selberg's Lemma.

As mentioned, to construct orbibundles we study representations $G(n_1,n_2,n_3) \to \PU$. In particular, the isometries $\rho(g_1),\rho(g_2),\rho(g_3)$ are elliptic. 
\begin{rmk}An isometry $A \in \PU \setminus \{\mathrm{id}\}$ can be seen as an element of $\SU$ up to choice of representative. We say that $A$ is elliptic if it has a negative eigenvector $c$, i.e., $\langle c,c \rangle<0$, which corresponds to a fixed point in $\HH_\CC^2$. If an elliptic isometry has three distinct eigenvalues, then we say it is regular elliptic, otherwise, we say it is special elliptic. We discuss the geometry of such isometries in Section \ref{subsection: holomorphic isometries}. A finite order element of $\PU$ is always elliptic. Thus, $\rho(g_1),\rho(g_2),\rho(g_3)$ are elliptic. 
\end{rmk}

The examples in \cite{discbundles} come from representations with $n_1=n_3=n$ and $n_2=2$ such that $\rho(g_1),\rho(g_3)$ are regular elliptic isometries and $\rho(g_2)$ is a reflection in a complex geodesic (see Section \ref{subsection: holomorphic isometries} for the corresponding definitions). In this work, we drop this restriction over $n_1,n_2,n_3$ and analyze examples where $\rho(g_1),\rho(g_2),\rho(g_3)$ are regular elliptic and examples where two of them are regular and the third one is special. 
We exclude those where at least two of the $\rho(g_j)$'s are not regular, since discrete faithful representations of this type are $\CC$-Fuschsian, meaning that they have a stable complex geodesic, see Proposition \ref{prop case 2 isometries are special}.

\smallskip

The generic representations where the $\rho(g_j)$'s are all regular elliptic are the most interesting ones because they correspond to two-dimensional regions on the $\PU$-character variety $\mathcal R(n_1,n_2,n_3)\coloneq\hom(G(n_1,n_2,n_3),\PU)/\PU$. More precisely, accordingly to Definition \ref{generic_representation} a representation $\rho:G(n_1,n_2,n_3) \to \PU$ is generic if there exists $j\ne k$ such that no eigenvector of $\rho(g_j)$ is orthogonal to an eigenvector of $\rho(g_k)$. In Proposition \ref{prop main result} we show that the open subset of $\mathcal R(n_1,n_2,n_3)$ of all generic representations $\rho$ such that $\rho(g_1), \rho(g_2), \rho(g_3)$ are regular elliptic isometries of order $n_1,n_2,n_3$ is two-dimensional if it is non-empty. We also provide in Remark \ref{remark alg} explicit parametrization for this open region.

This allows us to find two-dimensional families of pairwise non-isometric complex hyperbolic structures over the same disc orbibundle. In contrast, when two of the isometries are regular elliptic and the third one is special, the representation is an isolated point in the character variety, meaning that the example is locally rigid, not allowing small deformations to its complex hyperbolic structure. 
The representations $G(n,2,n)\to\PU$, found in \cite{discbundles}, are included in this second class of representations. The examples we have found are outlined in Section \ref{section computational results}.

We highlight two interesting families of examples of complex hyperbolic disc orbibundles where $\rho$ is generic and  all $\rho(g_i)'s$ are regular elliptic.

The first satisfies $e=0$, where $e$ stands for the Euler number of the disc orbibundle. Therefore, up to pullback, it gives rise to trivial disc bundles over closed Riemann surfaces. Determining whether or not a trivial bundle over a Riemann surface admits a complex hyperbolic structure was a long-standing problem; see, for instance, \cite[Open Question 8.1]{eli}, \cite[p. 583]{gol2}, and \cite[p. 14]{sch}. It has been first solved in \cite{agu} using a discrete faithful representation in the isometry group of $\HH_\CC^2$ of a group generated by two reflections in points and a reflection in an $\mathbb R$-plane. We provide explicit computations for a non-rigid example satisfying $e=0$ in Section \ref{section:Explicit example with trivial Euler number}.

The second family satisfies $e/\chi=-1$; here, $\chi$ denotes the Euler characteristic of the sphere with three cone points. Up to pullback, we obtain complex hyperbolic structures on cotangent bundles of Riemann surfaces. To the best of our knowledge, the fact that the cotangent bundle of a Riemann surface has a complex hyperbolic structure was previously unknown.

As we mentioned above, besides the Euler number $e$ of the disc orbibundle and the Euler characteristic $\chi$ of the $2$-orbifold, there is a third discrete invariant attached to each of our constructed complex hyperbolic disc orbibundles, the Toledo invariant (see, for instance, \cite[Definition 35]{bot}, \cite{krebs}, \cite{tol}). As in \cite{discbundles}, the formula $2(e+\chi)=3\tau$ holds in all examples we have constructed. 

\begin{rmk} This formula expresses a necessary condition for the existence of a holomorphic section of an orbibundle complex hyperbolic orbibundle $M \to S$. In fact, up to finite cover, we may assume that $S$ is a Riemann surface holomorphically embedded in $M$ as a section and from the exact sequence of complex vector bundles $0\to TS \to TM|_{S} \to NS \to 0$
we obtain the following relation between the first Chern numbers $$e+\chi =c_1(NS)+c_1(TS)= c_1(TM|_S).$$ 
From \cite[Section 2.2]{GKL} we have $3\tau = 2c_1(TM|_S)$. Thus, $2(e+\chi) = 3\tau$ in the presence of holomorphic sections. \end{rmk}

For the complex hyperbolic disc orbibundles in \cite{discbundles}, such a section does indeed exist \cite{kap2}; however, the proof relies on the local rigidity of representations $\rho: G(n,2,n)\to\PU$ and, therefore, does not extend to the non-rigid examples constructed here. Moreover, all the disc orbibundles we found endorse the complex hyperbolic variant of the Gromov-Lawson-Thurston conjecture (see \cite{discbundles}, \cite{GLT}) which states that an oriented disc bundle over a closed Riemann surface admits a complex hyperbolic structure if and only if $|e/\chi|\leq1$. Indeed, $-1 \leq e/\chi\leq1/2$ in all the examples we constructed. 

As in \cite{discbundles}, the fundamental domains we deal with
are bounded by a quadrangle of bisectors, i.e., of segments of hypersurfaces that are equidistant from a pair of points. Nevertheless, we found it necessary to develop some new tools to calculate the Euler number because, in the general case, there is no explicit way to obtain a surface group (whose existence is guaranteed by the Selberg Lemma) as a finite index subgroup of the turnover. Among these tools, we have the deformation Lemma
\ref{deformationlemma}, a central piece in calculating the Euler number.

At some point, we believed that all representations of the turnover in $\PU$ with regular $\rho(g_j)$'s were discrete. This naive point of view turned out to be false (see the reasoning above Figure \ref{334goldman}) but it seemed to be supported by the following observation. To prove discreteness, we essentially need to verify a list of inequalities involving some geometric invariants related to the fundamental domain. However, even when these inequalities are invalid (and, furthermore, even when we know the corresponding representation is not discrete) we can still apply the formulas that calculate the invariants $\chi,e,\tau$. Surprisingly, $2(e+\chi)=3\tau$ still holds. All this favors the study of complex hyperbolic geometry underlying quotients of $\HH_\CC^2$ which are more singular than orbifolds and has been a central motivation for the diffeological approach started in~\cite{bot}.

\section{Preliminaries}\label{section preliminaries}

\subsection{Complex hyperbolic generalities}
Let $V$ be a three-dimensional complex vector space endowed with a Hermitian form $\langle-,-\rangle:V\times V\to\CC$ of signature $-++$. Let
$$\BV\coloneq \big\{p\in\PP_\CC(V)\mid\langle p,p\rangle<0\big\},\quad
\SV\coloneq \big\{p\in\PP_\CC(V)\mid\langle p,p\rangle=0\big\},$$
$$\EV\coloneq \big\{p\in\PP_\CC(V)\mid\langle p,p\rangle>0\big\}$$
stand respectively for the subspaces of the complex projective plane $\PP_\CC(V)$ consisting of {\it negative,} {\it isotropic,} and {\it positive\/} points. We use the same letter to denote both a point $p\in\PP_\CC(V)$ and a representative of it in $V\setminus\{0\}$. This is harmless as long as we are referring to formulas that are independent of the choice of representatives.

The tangent space $\T_p\PP_\CC(V)$ to a nonisotropic point $p\in\PP_\CC(V)$ can be naturally identified with the space $\Lin(\CC p,p^\perp)$ of $\CC$-linear maps from the complex line $\CC p$ to its orthogonal complement with respect to the Hermitian form. The {\it complex hyperbolic plane\/} $\HH_\CC^2$ is the holomorphic $2$-ball $\BV$ of negative points equipped with the positive-definite {\it Hermitian metric}
\begin{equation}\label{metric}\langle t_1,t_2\rangle\coloneq -\frac{\big\langle t_1(p),t_2(p)\big\rangle}{\langle p,p\rangle},\quad t_1,t_2\in\T_p\BV.
\end{equation}
The ideal boundary of the complex hyperbolic plane in $\PP_\CC(V)$ is the $3$-sphere $\SV$ called the {\it absolute\/} and denoted by $\partial\HH_\CC^2$. We write $\overline\HH_\CC^2\coloneq \HH_\CC^2\cup\partial\HH_\CC^2$.

The real part of the Hermitian metric \eqref{metric} is a Riemannian metric in $\HH_\CC^2$ whose distance function is given by $d(p,q)=\mathrm{arccosh}\sqrt{\ta(p,q)}$, where $$\ta(p,q)\coloneq \frac{\langle p,q\rangle\langle q,p\rangle}{\langle p,p\rangle\langle q,q\rangle}$$
is the {\it tance\/} between $p,q\in\HH_\CC^2$. The imaginary part of \eqref{metric} is the {\it K\"ahler\/ 2-form\/} $\omega$. For each $c\in\HH_\CC^2$,
\begin{equation}\label{potential}P_c(t)\coloneq -\frac12\imag\frac{\big\langle t(p),c\big\rangle}{\langle p,c\rangle},\quad t\in\T_p\HH_\CC^2,
\end{equation}
is a potential for $\omega$, that is, $dP_c=\omega$. Potentials $P_{c_1},P_{c_2}$ based at possibly distinct points $c_1,c_2\in\HH_\CC^2$ are related by
\begin{equation}\label{basepointchange}
P_{c_1}=P_{c_2}+df_{c_1,c_2},\;\;\textrm{where}\;\;
f_{c_1,c_2}(p)\coloneq \frac12\Arg\frac{\langle c_1,p\rangle\langle p,c_2\rangle}{\langle c_1,c_2\rangle}\;\;\textrm{for every}\;\;p \in \HH_\CC^2
\end{equation}
(due to the signature of the Hermitian form, $\langle c_1,c_2\rangle\ne0$ for all
$c_1,c_2\in\HH_\CC^2$). The above explicit relation between potentials with distinct basepoints lies at the core of the calculation of the Toledo invariant of the discrete faithful $\PU$-representations that we construct (see Proposition \ref{toledomod2}).

\subsection{Totally geodesic subspaces}
The geodesics of the Riemannian metric are given by the nonempty intersections with $\HH_\CC^2$ of projectivizations $\PP_\CC(W)=\PP_\RR(W)$ of two-dimensional real subspaces $W$ of $V$ such that the Hermitian form restricted to $W$ is real and does not vanish. A geodesic $\PP_\CC(W)\cap\HH_\CC^2$ has two distinct {\it vertices\/} $\PP_\CC(W)\cap\partial\HH_\CC^2=\{v_1,v_2\}$, $v_1\ne v_2$. The unique geodesic determined by a pair of distinct points $c_1,c_2\in\overline\HH_\CC^2$ will be denoted by $G(c_1,c_2)$ and the segment of geodesic connecting $c_1,c_2$, by $G[c_1,c_2]$. Note that, explicitly, $G(c_1,c_2)=\PP_\CC\big(\RR c_1+\RR\langle c_1,c_2\rangle c_2\big)$.

There are two types of totally geodesic (real) surfaces in $\HH_\CC^2$: the complex geodesics and the $\RR$-planes. The complex geodesics are the nonempty intersections of projective lines with $\HH_\CC^2$; they are nothing but copies of a Poincar\'e disc (of constant curvature $-4$) inside $\HH_\CC^2$. The $\RR$-planes are the nonempty intersections of $\HH_\CC^2$ with projectivizations $\PP_\CC(W)=\PP_\RR(W)$ of three-dimensional real subspaces $W$ of $V$ such that the Hermitian form restricted to $W$ is real of signature $-++$. They correspond to copies of a Beltrami-Klein disc (of constant curvature $-1$) inside $\HH_\CC^2$.

We will sometimes consider that geodesics, complex geodesics, and $\RR$-planes are extended to the absolute $\partial\HH_\CC^2$.

Let $U$ be a two-dimensional complex subspace of $V$ such that the signature of the Hermitian form restricted to $U$ is $-+$. The positive point $\PP_\CC(U^\perp)\in\EV$ is the {\it polar\/} point of the complex geodesic $\PP_\CC(U)\cap\HH_\CC^2$. So, $\EV$ is the space of all complex geodesics in $\HH_\CC^2$. Note that the geodesic $\PP_\CC(W)\cap\HH_\CC^2$ is contained in a unique complex geodesic given by $\PP_\CC(W+iW)\cap\HH_\CC^2$.

A pair of complex geodesics is called {\it ultraparallel,} {\it asymptotic,} or {\it concurrent\/} when the complex geodesics do not intersect in $\overline\HH_\CC^2$, have a single common point in $\partial\HH_\CC^2$, or have a single common point in $\HH_\CC^2$. We write $C_1||C_2$ for ultraparallel complex geodesics $C_1,C_2$.

\begin{rmk}\label{simplecgproperties}
{\bf1.}~Let $L_1,L_2$ be complex geodesics with polar points $p_1,p_2$. Then $L_1,L_2$ are respectively ultraparallel, asymptotic, concurrent iff $\ta(p_1,p_2)>1$, $\ta(p_1,p_2)=1$, $\ta(p_1,p_2)<1$.

\smallskip

\noindent
{\bf2.}~Let $L=\PP_\CC(U)$ be a projective line such that the Hermitian form on $U$ is nondegenrate. Given $p\in L$, there exists a unique $q\in L$ such that $\langle p,q\rangle=0$.

\smallskip

\noindent
{\bf3.}~The tance between a complex geodesic $L$ and a point $p\in\HH_\CC^2$ is given by
$$\ta(L,p)\coloneq \min\big\{\ta(x,p)\mid x\in L\big\}=1-\ta(p,q),$$
where $q$ is the polar point of $L$.
\end{rmk}

\subsection{Bisectors}\label{subsection bisectors}
There are no totally geodesic hypersurfaces in $\HH_\CC^2$. In our construction of fundamental polyhedra, we use hypersurfaces known as {\it bisectors.} A bisector can be characterized as the equidistant locus from two distinct points in $\HH_\CC^2$. Alternatively, it is also determined by a (real) geodesic in $\HH_\CC^2$ and this is the viewpoint that we adopt and briefly describe in what follows.

Let $G=\PP_\CC(W)$ be a geodesic in $\HH_\CC^2$, let $L$ be its complex geodesic, that is, $L=\PP_\CC(W+iW)$, and let $p$ be the polar point of $L$. The bisector $B$ with {\it real spine\/} $G$ and {\it complex spine\/} $L$ is given by
$$B\coloneq \PP_\CC(W+\CC p)\cap\HH_\CC^2.$$
As before, we will sometimes consider bisectors as being extended to $\overline\HH_\CC^2$. The point $p$ is called {\it focus} of the bisector $B$.

The bisector $B$ with real spine $G$ is foliated by complex geodesics (see Figure \ref{bisector1}),
$$B=\bigsqcup_{x\in G}L_x,\;\;\textrm{where}\;\; L_x\coloneq \PP_\CC(\CC x+\CC p)\cap\HH_\CC^2.$$
For each $x\in G$, the complex geodesic $L_x$ is the unique complex geodesic through $x$ orthogonal to the complex spine $L$ in the sense of the Hermitian metric \eqref{metric}.  The complex geodesic $L_x$ is called the {\it slice\/} of $B$ through $x$. Each point in $B$ belongs to a unique slice of $B$.

\begin{figure}[H]
		\centering
		\includegraphics[scale = .7]{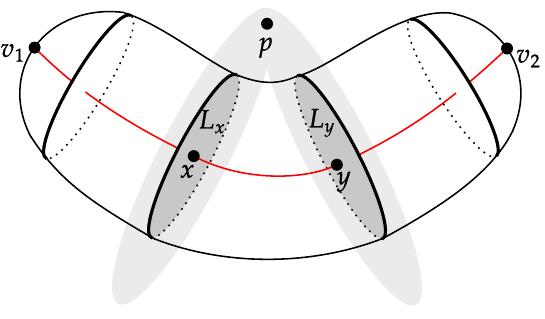}
		\captionof{figure}{Bisector foliated by complex geodesics.}
		\label{bisector1}
\end{figure}

The bisector $B$ with real spine $G=\PP_\CC(W)$ also admits the {\it meridional decomposition\/} 
$$B=\bigcup_{\varepsilon\in\SP^1}\PP_\CC\big(W+\RR\varepsilon p\big)\cap\HH_\CC^2,$$
where $p\in V\setminus\{0\}$ is a fixed representative of the polar point $p$ of the complex spine $L$ and $\varepsilon\in\SP^1$ is a unit complex number (see Figure \ref{bisector2}). Given $\varepsilon\in\SP^1$, the $\RR$-plane $\PP_\CC(W+\RR\varepsilon p)\cap\HH_\CC^2$ is called a {\it meridian\/} of the bisector. Every meridian of $B$ contains the real spine $G$. Each point $p\in B\setminus G$ is contained in a unique meridian $M$ of $B$ and determines a {\it meridional curve\/} which is the curve in $M$ through $p$ equidistant from $G$ (in other words, a {\it hypercycle\/} in the Beltrami-Klein disc $M$). We also define a meridional curve when $p\in B$ is isotropic. In this case, the intersection $M\cap\partial\HH_\CC^2$ is a circle divided by the vertices of $G$ into two semicircles; we take the one containing $p$.

\begin{figure}[H]
	\centering
	\includegraphics[scale = 1]{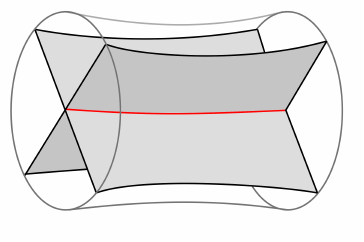}
	\captionof{figure}{Meridional decomposition.}
	\label{bisector2}
\end{figure}

A pair of ultraparallel complex geodesics $L_1,L_2$ determines a unique bisector $B(L_1,L_2)$ whose real spine is the unique geodesic $G$ that is simultaneously orthogonal to $L_1$ and $L_2$. Explicitly, this geodesic can be constructed as follows. The projective lines containing $L_1,L_2$ intersect at a positive point $p\in\EV$. The complex geodesic $\PP_\CC(p^\perp)$ intersects $L_i$ at $c_i$, $i=1,2$, and $G=G(c_1,c_2)$. The {\it segment of bisector\/} $B[L_1,L_2]$ is defined by
$$B[L_1,L_2]\coloneq \bigsqcup_{x\in G[c_1,c_2]}L_x,$$
where $L_x$ stands for the slice of $B[L_1,L_2]$
through $x$. The slice of $B[L_1,L_2]$ through the middle point of $G[c_1,c_2]$ is called the {\it middle slice\/} of $B[L_1,L_2]$.

\subsection{Holomorphic isometries}\label{subsection: holomorphic isometries}

The group of holomorphic isometries of $\HH_\CC^2$ is the projective unitary group $\PU$. The special unitary group $\SU$ is a triple cover of $\PU$ (lifts differ by a cube root of unity) and we refer to elements in $\SU$ also as isometries.

In our construction of discrete groups, we essentially use elliptic isometries. An isometry $I\in\SU$ is said to be {\it elliptic\/} when it has a negative fixed point $c\in\HH_\CC^2$. In this case, the projective line $\PP_\CC(c^\perp)$ is $I$-stable. So, the isometry has a fixed point $p\in\PP_\CC(c^\perp)$. The point $q\in\PP_\CC(c^\perp)$ which is orthogonal to $p$ (see Remark \ref{simplecgproperties}) must also be fixed by $I$. In other words, there is an orthogonal basis in $V$ formed by eigenvectors of $I$. Let $\varepsilon_1,\varepsilon_2,\varepsilon_3\in\mathbb C$ with $\varepsilon_1\varepsilon_2\varepsilon_3=1$ be the eigenvalues corresponding respectively to $c,p,q$. Since none of $c,p,q$ is isotropic, we have $|\varepsilon_i|=1$ for $i=1,2,3$. It is straightforward to see that $I$ is given by the rule
\begin{equation}\label{general formula for elliptic isometry}I:x\mapsto(\varepsilon_1-\varepsilon_3)\frac{\langle x,c\rangle}{\langle c,c\rangle}c+(\varepsilon_2-\varepsilon_3)\frac{\langle x,p\rangle}{\langle p,p\rangle}p+\varepsilon_3x.
\end{equation}

The isometry $I$ is called {\it regular\/} elliptic if its eigenvalues are pairwise distinct and {\it special\/} elliptic otherwise. We may describe the geometry of regular and special elliptic isometries as follows.

\smallskip

\noindent
{\bf The regular elliptic case.} The points $c,p,q$ are the only fixed points of $I$. We call $c$ the {\it center\/} of the isometry. The
complex geodesics $\PP_\CC(p^\perp)$ and $\PP_\CC(q^\perp)$ intersect orthogonally at $c$ and both are $I$-stable. There are no other
$I$-stable complex geodesics. Moreover, $I$ acts on $\PP_\CC(p^\perp)\cap \HH_\CC^2$ as the rotation about $c$ by the angle
$\Arg(\varepsilon_1^{-1}\varepsilon_3)$ and on $\PP_\CC(q^\perp)\cap \HH_\CC^2$ as the
rotation about $c$ by the angle
$\Arg(\varepsilon_1^{-1}\varepsilon_2)$.

\smallskip

\noindent
{\bf The special elliptic case.} We can assume that not all eigenvalues
of $I$ are equal (for otherwise, $I$ acts identically on $\PP_\CC(V)$).
Hence, exactly one of the projective lines $\PP_\CC(c^\perp)$,
$\PP_\CC(p^\perp)$, or $\PP_\CC(q^\perp)$ is pointwise fixed by $I$. If the pointwise fixed line is $\PP_\CC(c^\perp)$, that is, if $\varepsilon_2=\varepsilon_3$, then every complex geodesic that passes through $c$ is $I$-stable, the isometry acts on such complex geodesic as the rotation about $c$ by the angle $\Arg(\varepsilon_1^{-1}\varepsilon_2)$, and there are no other $I$-stable complex geodesics. In this case, we call $c$ the {\it center\/} of $I$ as well. When $\PP_\CC(p^\perp)$ is pointwise fixed ($\varepsilon_1=\varepsilon_3$), every complex geodesic intersecting $\PP_\CC(p^\perp)$ orthogonally in a negative point is stable under $I$, the isometry acts on such complex geodesics as the rotation about the intersection point by the angle $\Arg(\varepsilon_1^{-1}\varepsilon_2)$, and there are no other $I$-stable complex geodesics. In other words, $I$ is a rotation with the {\it axis\/} $\PP_\CC(p^\perp)\cap \HH_\CC^2$. The same is true for a rotation with the axis $\PP_\CC(q^\perp) \cap \HH_\CC^2$. An important particular case of rotation about an axis is the {\it reflection\/} in a complex geodesic $L=\PP_\CC(p^\perp)\cap\HH_\CC^2$ given by the involution $x\mapsto-x+2\frac{\langle x, p\rangle}{\langle p,p\rangle}p$ (taking $\varepsilon_1=\varepsilon_3=-\varepsilon_2=-1$ in expression \eqref{general formula for elliptic isometry}). See Figure \ref{ellipticisometries}.

\begin{figure}[H]
	\centering
	\begin{minipage}{.3\textwidth}
		\centering
	\includegraphics[scale = .8]{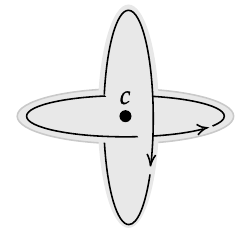}
	\caption*{(a)}
	\end{minipage}%
	\begin{minipage}{.3\textwidth}
		\centering
		\includegraphics[scale = .8]{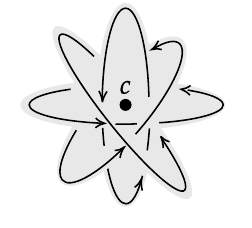}
		\caption*{(b)}
	\end{minipage}%
	\begin{minipage}{.3\textwidth}
		\centering
		\includegraphics[scale = .8]{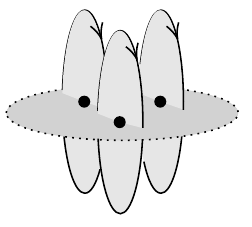}
		\caption*{(c)}
	\end{minipage}
\caption{\textbf{(a)} Rotations about $c$ on two orthogonal complex geodesics by distinct angles, \textbf{(b)} Rotation about point, and \textbf{(c)} Rotation about complex line.}
\label{ellipticisometries}
\end{figure}

\subsection{The turnover and its \texorpdfstring{$\PU$}{PU(2,1)}-character variety}

\label{section turnover and character variety}
\subsubsection{The turnover}\label{turnover definition}
The group
$$G(n_1,n_2,n_3)\coloneq \langle g_1,g_2,g_3\mid
g_1^{n_1}=g_2^{n_2}=g_3^{n_3}=1{\text{ \rm and }}g_3g_2g_1=1\rangle$$
is called the (hyperbolic) {\it turnover\/}, where $n_1$, $n_2$, $n_3$ are positive integers satisfying $\sum\limits_{j=1}^3\frac1{n_j}<1$.

We typically write simply $G$ in place of $G(n_1,n_2,n_3)$. It is well-known that $G$ has a discrete cocompact action on the real hyperbolic plane $\mathbb H^2_\mathbb R$ (see Figure \ref{fundamental domain and orbifold}). Indeed, take a geodesic triangle $\Delta\subset\mathbb H^2_\mathbb R$ with interior angles $\pi/n_1,\pi/n_2,\pi/n_3$ and let $H(n_1,n_2,n_3)$ denote the {\it triangle\/} group generated by the reflections $r_1,r_2,r_3$ in the sides of $\Delta$. The turnover $G$ appears as the index $2$ subgroup in $H$ generated by the rotations $g_1\coloneq r_1r_2$, $g_2\coloneq r_3r_1$, $g_3\coloneq r_2r_3$. By the Poincar\'e Polyhedron Theorem, the quadrilateral $P\coloneq \Delta\cup r_2\Delta$ with the vertices, sides, and side-pairings indicated in Picture \ref{fundamental domain and orbifold}, is a fundamental domain for the action of $G$ on $\HH_\RR^2$.

The orbifold $\HH_\RR^2/G$ is the $2$-sphere $\SP^2(n_1,n_2,n_3)$ with $3$ cone points of angles $2\pi/n_1,2\pi/n_2,2\pi/n_3$ and orbifold Euler characteristic (see \cite{sco})
\[\chi=-1+\frac{1}{n_1}+\frac{1}{n_2}+\frac{1}{n_3}.\]

\begin{figure}[H]
	\centering
	\begin{minipage}{.5\textwidth}
		\centering
		\includegraphics[scale = .8]{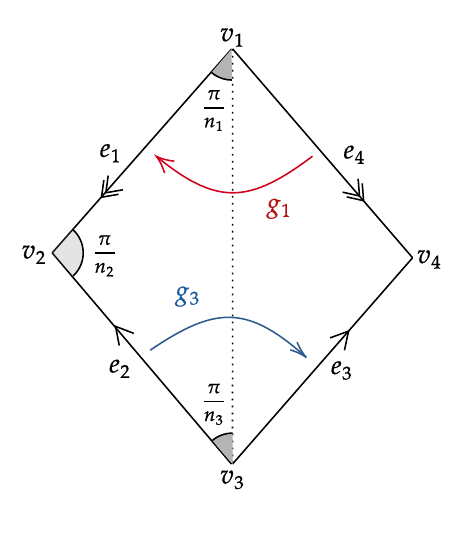}
		\caption*{(a)}
	\end{minipage}%
	\begin{minipage}{.5\textwidth}
		\vspace{1.53cm}
		\centering
		\includegraphics[scale =0.9]{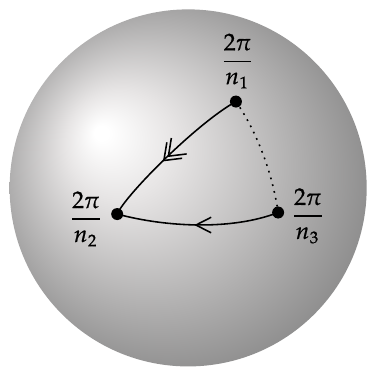}
		\caption*{(b)}
	\end{minipage}
	\caption{\textbf{(a)} Fundamental domain for $G$ and \textbf{(b)} Orbifold $\SP^2(n_1,n_2,n_3)$}
	\label{fundamental domain and orbifold}
\end{figure}

\subsubsection{Character variety}\label{character_variety} The  $\PU$-character variety for a co-compact Fuchsian group $\Gamma$ is the space of representations $\Gamma \to \PU$ modulo $\PU$-conjugation
$$\hom(\Gamma,\PU)/\PU.$$
Here, the action of $\PU$ on $\hom(\Gamma,\PU)$ given by $(g,\rho) \mapsto g\rho(\cdot)g^{-1}$, where $\rho:\Gamma \to \PU$ is a representation and $g \in \PU$.

We denote by $\mathcal R(n_1,n_2,n_3)$ the $\PU$-character variety for the turnover group $G(n_1,n_2,n_3)$. Sometimes we write $\mathcal R$ instead of $\mathcal R(n_1,n_2,n_3)$ if the parameters are clear from the context.

\newpage 

\section{Generic representations}
\label{section generic representations}
As commented in Remark \ref{remark disc orbibundle}, we are interested in constructing disc orbibundles with complex hyperbolic total spaces. Especially non-rigid ones, meaning that the holomorphic structure of the disc orbibundle can be deformed without altering the topology of the orbibundles. We can view a complex hyperbolic disc orbibundle modulo holomorphic isomorphism as a point in the $\PU$-character variety. Thus, to find a non-rigid complex hyperbolic disc orbibundle we must investigate representations of the $\PU$-character variety that are not isolated points.  

From here onwards we only consider representations $\rho:G(n_1,n_2,n_3) \to \PU$ such that $\rho(g_j)$ have order $n_j$ for $j=1,2,3$. The isometries $\rho(g_j)$ are elliptic because a non-identical isometry of finite order is elliptic. We will denote $\rho(g_j)$ by $I_j$.



Consider a representation $\rho: G\to\PU$ that stabilizes a projective line in $\mathbb P_\CC(V)$, that is, it possesses a fixed point in $\mathbb P_\CC(V)$. If $\rho$ is discrete, then the quotient of the stable projective line by $G$ is $\SP^2(n_1,n_2,n_3)$, which has negative Euler characteristic by definition. Thus, the representation $\rho$ is $\CC$-Fuchsian, meaning the stable projective line is a complex geodesic (a Poincaré disc). It is easy to construct complex hyperbolic disc orbibundles from $\CC$-Fuchsian examples, and their Toledo invariant and Euler number are known to satisfy $|\tau/\chi| = 1$ and $e/\chi = 1/2$ (see \cite{discbundles}).

Therefore, we exclude representations for which two of the isometries $I_1, I_2, I_3$ are special elliptic, because representations of this type always have stable projective lines by the Proposition~\ref{prop case 2 isometries are special}.

\medskip
\begin{prop} \label{prop case 2 isometries are special} If at least two of the\/ $I_j$'s are special elliptic isometries, then\/ $\rho$ stabilizes a projective line.
\end{prop}

\medskip

{\bf Proof.} A special elliptic isometry has a pointwise fixed projective line (see Section \ref{section preliminaries}). Hence, we can find a point simultaneously fixed by two special elliptic isometries among $I_j$, $j=1,2,3$. This point must also be fixed by the remaining isometry due to the relation $I_3I_2I_1=1$.
\hfill$_\blacksquare$

\medskip

\begin{rmk}\label{PUSU}
Whenever we deal with elliptic isometries
$I_1, I_2, I_3$ we will assume that either:

\begin{itemize}
	\item $I_1,I_2,I_3\in\PU$ with $I_j^{n_j}=I_3I_2I_1=1$ and
	$\sum\limits_{j=1}^3\frac1{n_j}<1$;
	\item $I_1,I_2,I_3\in\SU$ with $I_j^{n_j}=\delta_j$ and
	$I_3I_2I_1=1$, where $\delta_j\in\mathbb C$ is a cubic root of unity and
	$\sum\limits_{j=1}^3\frac1{n_j}<1$.
\end{itemize}
Whether we take the isometries in $\PU$ or in $\SU$ will be explicitly
indicated or should be clear from the context.
\end{rmk}

From now on, we assume that at least two of the isometries $I_1, I_2, I_3$ are regular.
So, let $I_1$ and $I_3$ be regular elliptic.  If that is not the case, we use that $I_3I_2I_1= I_2I_1I_3^{-1}=I_1^{-1}I_3I_2=1$. If $I_1$ is special elliptic we can define $I_3'\coloneq I_2$, $I_2'\coloneq I_1$ and $I_1'\coloneq I_3^{-1}$ and if $I_3$ is special elliptic we can define $I_3'\coloneq I_1^{-1}$, $I_2'\coloneq I_3$ and $I_1'\coloneq I_2$.

We can choose representatives for the isometries $I_1, I_2, I_3$ in $\SU$ with respective eigenvalues $\alpha_j,\beta_j,\gamma_j^{-1}$, $j=1,2,3$, satisfying $I_3I_2I_1=1$. The eigenvalues denoted with index $1$ correspond to a negative eigenvector. Since $I_1,I_3$ are regular elliptic, we have $\alpha_i \neq \alpha_j$ and $\gamma_i \neq \gamma_j$ for $i\neq j$. For $I_2$, there are three possibilities. It can be regular elliptic ($\beta_i\ne\beta_j$ for $i\ne j$), a rotation about a point in $\HH_\CC^2$ ($\beta_2 = \beta_3$), or a rotation around a complex geodesic ($\beta_1 = \beta_2$ or $\beta_1 =\beta_3$).

So, in order to find representations of the turnover $G(n_1,n_2,n_3)$ in $\PU$ we fix \[(\alpha_1,\alpha_2,\alpha_3),(\beta_1,\beta_2,\beta_3),(\gamma_1,\gamma_2,\gamma_3) \in \SP^1\times \SP^1 \times \SP^1\]  of order $3n_1,3n_2,3n_3$, respectively,  satisfying
\begin{gather*}
\alpha_1\alpha_2\alpha_3 = \beta_1\beta_2\beta_3=\gamma_1\gamma_2\gamma_3=1,\\
\alpha_i  \neq \alpha_j,\quad \gamma_i  \neq \gamma_j,
\quad\alpha_i^{n_1}  = \alpha_j^{n_1},\quad \beta_i^{n_2}  = \beta_j^{n_2},\quad \gamma_i^{n_3} = \gamma_j^{n_3} \quad for \quad  i\neq j,
\end{gather*}
and  a regular elliptic isometry $I_1$ with eigenvalues $\alpha_j$. We look for all $I_2$ such that \begin{equation}\label{trace equation}
\tr(I_2I_1)=\sum\limits_{i=1}^3\gamma_i.
\end{equation}
It follows from \cite[p.~204, Theorem 6.2.4]{goldmanbook} that this {\it trace equation\/} holds iff $I_3\coloneq (I_2I_1)^{-1}$ is a regular elliptic isometry with eigenvalues $\gamma_j^{-1}$. This strategy allows us to prove the Proposition~\ref{prop main result}.

\begin{rmk}This result by Goldman is analogous to the one in the Poincaré half-plane, where the trace of an isometry determines if an isometry is elliptic. More precisely, Goldman's result states that an isometry $I$ is regular elliptic if, and only if, it has the trace of a regular elliptic~isometry.
\end{rmk}
\smallskip

\begin{defi}\label{generic_representation}
Let $\rho$ be a representation where $\rho (g_1), \rho (g_2), \rho (g_3)$ are elliptic isometries. We call the representation {\it generic\/} if there exists $i\ne j$ such that the fixed points of $\rho(g_i)$ and $\rho(g_j)$ are pairwise non-orthogonal.
\end{defi}

\begin{prop} \label{prop main result} Let $G\coloneq G(n_1,n_2,n_3)$ be a turnover group and $\mathcal R$ its corresponding $\PU$-character variety.
\begin{itemize}
    \item The subset of $\mathcal R$ formed by representations $\rho: G \to \PU$ such that two of the isometries $\rho(g_1),\rho(g_2), \rho(g_3)$ are regular elliptic and the third one is special elliptic is discrete, meaning that its representations are rigid.
    
    \item The open subset of $\mathcal R$ of all
    generic representations $\rho$ such that $\rho(g_1), \rho(g_2), \rho(g_3)$ are regular elliptic isometries of order $n_1,n_2,n_3$ is two-dimensional if it is non-empty.

    More precisely, we prove the following: consider $$\Lambda: = \{\rho \in \mathcal R(n_1,n_2,n_3): 
 \rho(g_1) \text{ and }\rho(g_2) \text{ have pairwise non-orthogonal eigenvectors}\}.$$ 
 Let $L_1, L_2$ be the complex geodesics stable under the action of $\rho(g_1)$ and $u$ be the negative fixed point of $\rho(g_2)$. We have the two positive parameters $s\coloneq \ta(u,L_1)-1$ and $t\coloneq\ta(u,L_2)-1$.

 The map $\pi:\Lambda \mapsto \RR_{>0} \times \RR_{>0}$, $\rho \mapsto (s,t)$, is a two-to-one local homeomorphism. Given $\rho$, the other representation having the same $(s,t)$ is $\rho'$ defined by $$g_1 \mapsto \rho(g_1)^{-1}, \quad g_2 \mapsto \rho(g_2)^{-1}, \quad g_3 \mapsto \rho(g_2)^{-1}\rho(g_3)^{-1}\rho(g_2).$$ 

 \end{itemize}
\end{prop}
Section \ref{section proof of main result} is devoted to proving the Proposition \ref{prop main result}.
 \begin{rmk}\label{remark alg} We can write down $\rho, \rho' \in \Lambda $ explicitly as functions of $(s,t)$.
 
 Define the constants 
    $\alpha_{ij}\coloneq  \alpha_i - \alpha_j$, $\beta_{ij}\coloneq  \beta_i - \beta_j$,
    $$k\coloneq \frac1{\beta_{23}}\left(\sum\limits_{i=1}^3\gamma_i-\alpha_1(\beta_1+
    \beta_2-\beta_3)-\beta_3(\alpha_2+\alpha_3)\right), \quad M=\left[\smallmatrix\real\alpha_{21}&\real\alpha_{31}\\
\imag\alpha_{21}&\imag\alpha_{31}\endsmallmatrix\right],$$
    where $(\alpha_1,\alpha_2,\alpha_3)$, $(\beta_1,\beta_2,\beta_3)$, $(\gamma_1,\gamma_2,\gamma_3)$ are the eigenvalues of $\rho(g_1)$, $\rho(g_2)$, $\rho(g_3)$, respectively. 
    
     Consider the following functions of $(s,t)$:
     \begin{gather*}
     u_1 \coloneq  \sqrt{1+s+t}, \quad u_2\coloneq  \sqrt{s}, \quad u_3\coloneq \sqrt{t},\\
     |v_2|^2 \coloneq \frac{s\imag\frac{\alpha_{31}\overline\alpha_{21}\overline
	\beta_{13}}{\overline\beta_{23}}+t|\alpha_{31}|^2\imag\frac{\overline
	\beta_{13}}{\overline\beta_{23}}+\imag(\alpha_{31}\overline k)}{\det M},\\
    |v_3|^2 \coloneq  \frac{s|\alpha_{21}|^2\imag\frac{\beta_{13}}{\beta_{23}}+
t\imag\frac{\overline\alpha_{21}\alpha_{31}\beta_{13}}{\beta_{23}}+
\imag(\overline\alpha_{21}k)}{\det(M)},\\
v_1^2\coloneq |v_2|^2+|v_3|^2-1
\end{gather*}
All these functions are positive-valued. From, these functions we obtain the parameters
$v_1,v_2,v_3$ by the formulas
$$v_1 = \sqrt{|v_2|^2+|v_3|^2-1}$$
$$v_2=\frac1{2v_1\sqrt{s(1+s+t)}}\big(-t|v_3|^2+(1+s+t)v_1^2+
s|v_2|^2\pm
i\sqrt\Delta\big),$$
$$v_3=\frac1{2v_1\sqrt{t(1+s+t)}}\big(-s|v_2|^2+(1+s+t)v_1^2+
t|v_3|^2\mp
i\sqrt\Delta\big),$$
where
$$
\Delta\coloneq 4v_1^2|v_2|^2s(1+s+t)-\big(-t|v_3|^2+(1+s+t)v_1^2+
s|v_2|^2\big)^2\ge0.
$$

The representation $\rho$ modulo conjugation is given by $I_j=\rho(g_j)$, where
\begin{equation*}
I_1\coloneq \left[\smallmatrix\alpha_1&0&0\\0&\alpha_2&0\\0&0&\alpha_3
\endsmallmatrix\right],\quad
I_2\coloneq \left[\smallmatrix-v_1^2\beta_{23}+u_1^2\beta_{13}+\beta_3&v_1
\overline v_2\beta_{23}-u_2u_1\beta_{13}&v_1\overline
v_3\beta_{23}-u_3u_1\beta_{13}\\-v_1v_2\beta_{23}+u_1u_2\beta_{13}&
|v_2|^2\beta_{23}-u_2^2\beta_{13}+\beta_3&v_2\overline
v_3\beta_{23}-u_3u_2\beta_{13}\\-v_1v_3\beta_{23}+u_1u_3\beta_{13}&
\overline v_2v_3\beta_{23}-u_2u_3\beta_{13}&|v_3|^2\beta_{23}-u_3^2
\beta_{13}+\beta_3\endsmallmatrix\right], \quad I_3\coloneq (I_2I_1)^{-1}.
\end{equation*}

The vectors $u=(u_1,u_2,u_3)$ and $v=(v_1,v_2,v_3)$ are the eigenvectors of $I_2$ with respect to the eigenvalues $\beta_1$ and $\beta_2$. 

From these explicit formulas, it is straightforward the construction of representations with prescribed eigenvalues. We discuss our computational results in Section \ref{section computational results}.
\end{rmk}
\noindent

\subsection{Proof of Proposition \ref{prop main result}} \label{section proof of main result}

We investigate isometries $I_1,I_2,I_3 \in \PU$ such that $I_3I_2I_1 = 1$ and $I_j$ has order $n_j$.  As stated in proposition \ref{prop main result}, we are interested in representations where at least two of the three isometries are regular elliptic isometries. We may assume that $I_1, I_3$ are regular elliptic isometries.
\smallskip

Let $I_1,I_2,I_3\in\SU$ denote elliptic isometries in given conjugacy
classes: $\alpha_i$, $\beta_i$, and $\gamma_i^{-1}$, $i=1,2,3$, stand
respectively for the eigenvalues of $I_1$, $I_2$, and $I_3$. The first eigenvalue of each $I_i$ has a negative eigenvector.

Since we are considering representations up to conjugation, we can take $I_1$ to be diagonal. 
$$I_1=\left[\smallmatrix\alpha_1&0&0\\0&\alpha_2&0\\0&0&\alpha_3
\endsmallmatrix\right]$$

To determine an elliptic isometry $I_2$ with eigenvalues $\beta_1,\beta_2, \beta_3$ such that $I_3\coloneq (I_2I_1)^{-1}$ is a regular elliptic isometry with eigenvalues $\gamma_1^{-1},\gamma_2^{-1},\gamma_3^{-1}$ we have to solve the trace equation \eqref{trace equation}.
As previously said, by \cite[p.~204, Theorem 6.2.4]{goldmanbook}, the trace equation holds if and only if $I_2I_1$ is a
regular elliptic isometry with eigenvalues
$\gamma_1,\gamma_2,\gamma_3$.

We consider first the case where $I_2$ is regular elliptic and later the case where $I_2$ special elliptic, which is separated into two subcases: $I_2$ is rotation about a point in $\HH_\CC^2$ or $I_2$ is rotation about a complex geodesic in $\HH_\CC^2$.

\medskip

\subsubsection{Regular case: $I_2$ is regular elliptic.} 
Let
$u,v \in V$ denote eigenvectors  of $I_2$ corresponding to the eigenvalues $\beta_1$ and $\beta_2$, with $\langle u,u \rangle = -1$ and $\langle v,v \rangle = 1$. 
We fix an orthonormal basis $\mathcal B$ for $V$ of signature $-++$ consisting of eigenvectors of $I_1$. The corresponding eigenvalues are
$\alpha_1,\alpha_2,\alpha_3$. In this basis, we write
$$u=\left[\smallmatrix u_1\\u_2\\u_3\endsmallmatrix\right],\quad
v=\left[\smallmatrix v_1\\v_2\\v_3\endsmallmatrix\right].$$
Since we deal with generic representations we can assume that the eigenvalues of $I_1, I_2$ are pairwise non-orthogonal. In particular, the coordinates of $u,v$ in the basis $\mathcal B$ are non-zero.

Up to multiplying each vector of the basis $\mathcal B$ by unit complex numbers, we can assume that
$$u_1,u_2,u_3>0,\quad -u_1^2+u_2^2+u_3^2=-1$$ and up to multiplying $v$ by a unit complex number, we can assume, $v_1>0$.

Note that
\begin{equation}\label{condition u e v} -u_1^2+u_2^2+u_3^2=-1,\quad-v_1^2+|v_2|^2+|v_3|^2=1,
\quad-u_1v_1+u_2v_2+u_3v_3=0.
\end{equation}

Let us write down the trace equation
\eqref{trace equation} in the basis $\mathcal B$. We
define
\begin{equation}\label{definition s e t}\sqrt s\coloneq u_2,\quad\sqrt
t\coloneq u_3,\quad\beta_{ij}\coloneq \beta_i-\beta_j,\quad\alpha_{ij}\coloneq 
\alpha_i-\alpha_j,
\end{equation}
for $i,j=1,2,3$. Hence, $u_1=\sqrt{1+s+t}$ and $\alpha_{ij},\beta_{ij}\ne0$
if $i\ne j$. Using \eqref{general formula for elliptic isometry}, we write $I_1,I_2$ in the basis
$\mathcal B$:

\begin{equation}\label{matrices I1, I2}
I_1=\left[\smallmatrix\alpha_1&0&0\\0&\alpha_2&0\\0&0&\alpha_3
\endsmallmatrix\right],\qquad
I_2=\left[\smallmatrix-v_1^2\beta_{23}+u_1^2\beta_{13}+\beta_3&v_1
\overline v_2\beta_{23}-u_2u_1\beta_{13}&v_1\overline
v_3\beta_{23}-u_3u_1\beta_{13}\\-v_1v_2\beta_{23}+u_1u_2\beta_{13}&
|v_2|^2\beta_{23}-u_2^2\beta_{13}+\beta_3&v_2\overline
v_3\beta_{23}-u_3u_2\beta_{13}\\-v_1v_3\beta_{23}+u_1u_3\beta_{13}&
\overline v_2v_3\beta_{23}-u_2u_3\beta_{13}&|v_3|^2\beta_{23}-u_3^2
\beta_{13}+\beta_3\endsmallmatrix\right].
\end{equation}
The trace equation \eqref{trace equation} takes the form
\[\alpha_1(-v_1^2\beta_{23}+u_1^2\beta_{13}+\beta_3)+
\alpha_2\big(|v_2|^2\beta_{23}-u_2^2\beta_{13}+\beta_3\big)+
\alpha_3\big(|v_3|^2\beta_{23}-u_3^2\beta_{13}+\beta_3\big)=
\sum_{i=1}^3\gamma_i\]
which is equivalent to
\begin{equation} \label{dirty trace identity}|v_2|^2\alpha_{21}+|v_3|^2\alpha_{31}=\frac{\beta_{13}}{\beta_{23}}
\big(\alpha_{21}s+\alpha_{31}t\big)+k
\end{equation}
in view of the first two equalities in \eqref{condition u e v} and in \eqref{definition s e t}, where
\[k\coloneq \frac1{\beta_{23}}\left(\sum\limits_{i=1}^3\gamma_i-\alpha_1(\beta_1+
\beta_2-\beta_3)-\beta_3(\alpha_2+\alpha_3)\right).\]

\medskip

We rewrite equation \eqref{dirty trace identity} so that $|v_2|^2$ and $|v_3|^2$ are
explicitly given in terms of $s$ and $t$.

\medskip
\begin{lemma}\label{lemma 1 for I2 regular}
The determinant of
$M\coloneq \left[\smallmatrix\real\alpha_{21}&\real\alpha_{31}\\
\imag\alpha_{21}&\imag\alpha_{31}\endsmallmatrix\right]$
does not vanish. The trace equation is equivalent to the equations
\begin{gather*}
|v_2|^2\det M=s\imag\frac{\alpha_{31}\overline\alpha_{21}\overline
	\beta_{13}}{\overline\beta_{23}}+t|\alpha_{31}|^2\imag\frac{\overline
	\beta_{13}}{\overline\beta_{23}}+\imag(\alpha_{31}\overline k),\\
|v_3|^2\det M=s|\alpha_{21}|^2\imag\frac{\beta_{13}}{\beta_{23}}+
t\imag\frac{\overline\alpha_{21}\alpha_{31}\beta_{13}}{\beta_{23}}+
\imag(\overline\alpha_{21}k).
\end{gather*}
The coefficient of\/ $t$ in the first equation and that of\/ $s$ in the
the second equation does not vanish.
\end{lemma}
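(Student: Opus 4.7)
The plan is to recognize equation \eqref{dirty trace identity} as a single $\CC$-linear equation in the two real unknowns $|v_2|^2$ and $|v_3|^2$; splitting it into real and imaginary parts yields a real $2\times 2$ system whose coefficient matrix is exactly $M$, and Cramer's rule then produces the displayed expressions. The central observation is that all three non-vanishing claims of the lemma reduce to one elementary geometric fact: three pairwise distinct points on the unit circle $\SP^1\subset\CC$ cannot be collinear in $\CC$.

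First I would compute $\det M=\imag(\overline{\alpha_{21}}\alpha_{31})$ by direct expansion. This vanishes precisely when $\alpha_{31}/\alpha_{21}\in\RR$, i.e.\ when $\alpha_1,\alpha_2,\alpha_3$ lie on a real line in $\CC$. Since $I_1$ is regular elliptic, the three eigenvalues $\alpha_j$ are distinct points of $\SP^1$, and a straight line meets $\SP^1$ in at most two points, so $\det M\ne 0$.

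Next, setting $C:=\tfrac{\beta_{13}}{\beta_{23}}(\alpha_{21}s+\alpha_{31}t)+k$, equation \eqref{dirty trace identity} reads $|v_2|^2\alpha_{21}+|v_3|^2\alpha_{31}=C$; separating real and imaginary parts yields the real $2\times 2$ system with coefficient matrix $M$ and right-hand side $(\real C,\imag C)$. Cramer's rule delivers $|v_2|^2\det M=\imag(\alpha_{31}\overline C)$ and $|v_3|^2\det M=\imag(\overline{\alpha_{21}}C)$. Substituting the expressions for $C$ and $\overline C$ and collecting the $s$- and $t$-coefficients is straightforward bookkeeping that reproduces the two displayed identities. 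Equivalence with the trace equation is automatic because $\det M\ne 0$.

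Finally, the coefficient of $t$ in the first equation equals $|\alpha_{31}|^2\imag(\overline\beta_{13}/\overline\beta_{23})$ and that of $s$ in the second equals $|\alpha_{21}|^2\imag(\beta_{13}/\beta_{23})$. Both factors $|\alpha_{ij}|^2$ are nonzero since $I_1$ is regular; and the remaining imaginary parts vanish only when $\beta_1,\beta_2,\beta_3$ are collinear in $\CC$, which the same circle-versus-line argument forbids because $I_2$ is regular elliptic in the case at hand, so $\beta_1,\beta_2,\beta_3$ are three distinct points of $\SP^1$. The only mild obstacle is tracking complex conjugations through Cramer's rule so that the final formulas match the stated ones exactly.
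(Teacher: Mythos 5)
Your proposal is correct and follows essentially the same route as the paper: split the complex-linear equation \eqref{dirty trace identity} into real and imaginary parts, apply Cramer's rule with coefficient matrix $M$, and derive all three non-vanishing claims from the fact that three distinct points of $\SP^1$ are never collinear (the paper phrases this as the triangle with vertices $\alpha_1,\alpha_2,\alpha_3$ having nonzero area and $\beta_{13}/\beta_{23}$ determining an internal angle of the $\beta$-triangle, which is the same observation). The bookkeeping you defer checks out, e.g.\ $-\imag(\alpha_{21}\overline z)=\imag(\overline{\alpha_{21}}z)$ reconciles your second Cramer formula with the paper's.
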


\medskip

{\bf Proof.} Note that $|\det M|$ is twice the area of the triangle with vertices $\alpha_1,\alpha_2,\alpha_3$ in the unit circle. Since $\alpha_1,\alpha_2,\alpha_3$ are pairwise distinct, this triangle has a non-vanishing area. Similarly, $\imag\displaystyle\frac{\beta_{13}}{\beta_{23}}\ne0$ because $\displaystyle\frac{\beta_{13}}{\beta_{23}}$ determines an internal angle of the triangle with vertices $\beta_1,\beta_2,\beta_3$.

The trace equation \eqref{trace equation} is equivalent to
$$|v_2|^2\real\alpha_{21}+|v_3|^2\real\alpha_{31}=\real z,\qquad
|v_2|^2\imag\alpha_{21}+|v_3|^2\imag\alpha_{31}=\imag z,$$
where
$z\coloneq \displaystyle\frac{\beta_{13}}{\beta_{23}}\big(\alpha_{21}s+
\alpha_{31}t\big)+k$.
Hence,
$$|v_2|^2\det M=\imag\alpha_{31}\real z-\real\alpha_{31}\imag
z=\imag(\alpha_{31}\overline z),$$
$$|v_3|^2\det M=\real\alpha_{21}\imag z-\imag\alpha_{21}\real
z=-\imag(\alpha_{21}\overline z).\eqno{_\blacksquare}$$

\medskip

Recall that the coordinates of $u,v$ are all non-zero and, consequently, $|v_2|^2>0$, $|v_3|^2>0$, and $|v_2|^2+|v_3|^2-1>0$, where this last allow us to compute $v_1 = \sqrt{|v_2|^2+|v_3|^2-1}$. Thus, following Lemma \ref{lemma 1 for I2 regular}, we have the following inequalities:

\begin{equation}\tag{\bf C1} \label{conditions C1}\begin{matrix}\displaystyle|v_2|^2=\frac1{\det
M}\big(s\imag\displaystyle\frac{\alpha_{31}\overline\alpha_{21}\overline
\beta_{13}}{\overline\beta_{23}}+t|\alpha_{31}|^2\imag\frac{\overline
\beta_{13}}{\overline\beta_{23}}+\imag(\alpha_{31}\overline k)\big)>0,

\\\\\displaystyle |v_3|^2=\frac1{\det M}\big(s|\alpha_{21}|^2\imag\displaystyle\frac{\beta_{13}}{\beta_{23}}+t\imag\frac{\overline\alpha_{21}\alpha_{31}\beta_{13}}{\beta_{23}}+\imag(\overline\alpha_{21}k)\big)>0,

\\\\\displaystyle v_1^2=-1+|v_2|^2+|v_3|^2>0.
\end{matrix}
\end{equation}

Conversely, if Condition \ref{conditions C1} holds for a pair
$(s,t)\in\mathbb R_{>0}\times\mathbb R_{>0}$ of positive real numbers, then
the equations in Lemma \ref{lemma 1 for I2 regular} and the second equation in \eqref{condition u e v} provide
the positive real numbers $v_1,|v_2|,|v_3|$.

In the lemma below, we state a condition, referred to as Condition \ref{condition C2},
that characterizes the possibility of expressing $v_2$ and $v_3$ in
terms of $s,t,v_1,|v_2|,|v_3|$.

\medskip
\begin{lemma}\label{lemma 2 for I2 regular}
 We have
$$v_2=\frac1{2v_1\sqrt{s(1+s+t)}}\big(-t|v_3|^2+(1+s+t)v_1^2+
s|v_2|^2\pm
i\sqrt\Delta\big),$$
$$v_3=\frac1{2v_1\sqrt{t(1+s+t)}}\big(-s|v_2|^2+(1+s+t)v_1^2+
t|v_3|^2\mp
i\sqrt\Delta\big),$$
where
\begin{equation}\label{condition C2} \tag{\bf C2}
\Delta\coloneq 4v_1^2|v_2|^2s(1+s+t)-\big(-t|v_3|^2+(1+s+t)v_1^2+
s|v_2|^2\big)^2\ge0.
\end{equation}

Reciprocally, let $s,t,v_1,|v_2|,|v_3|$ be given positive real
numbers such that $-v_1^2+|v_2|^2+|v_3|^2=1$ and $\Delta\ge0$.
Then\/ $v_2,v_3$ are well defined in terms of $s,t,v_1,|v_2|,|v_3|$
as above and satisfy $-u_1v_1+u_2v_2+u_3v_3=0$.
\end{lemma}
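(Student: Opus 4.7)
The plan is to view the problem as solving the system of two equations from \eqref{condition u e v} that involve $v_2, v_3$ as a complex/real system in the real and imaginary parts of $v_2$, using the moduli $v_1, |v_2|, |v_3|$ and the parameters $s, t$ as data. Since $v_1 > 0$ and $u_1 = \sqrt{1+s+t}$, $u_2 = \sqrt s$, $u_3 = \sqrt t$, the third equation in \eqref{condition u e v} reads
\begin{equation*}
\sqrt{s}\, v_2 + \sqrt{t}\, v_3 \;=\; \sqrt{1+s+t}\, v_1,
\end{equation*}
which explicitly solves $v_3$ as an affine function of $v_2$. I would substitute this expression into $t|v_3|^2 = |\sqrt{1+s+t}\, v_1 - \sqrt{s}\, v_2|^2$ to obtain, after expanding,
\begin{equation*}
t|v_3|^2 \;=\; (1+s+t)v_1^2 - 2\sqrt{s(1+s+t)}\, v_1\,\real(v_2) + s|v_2|^2,
\end{equation*}
which is linear in $\real(v_2)$. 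Solving gives $\real(v_2) = \bigl((1+s+t)v_1^2 + s|v_2|^2 - t|v_3|^2\bigr)/(2v_1\sqrt{s(1+s+t)})$.

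Next, I would recover $\imag(v_2)$ from $|v_2|^2 = \real(v_2)^2 + \imag(v_2)^2$. A short computation shows that $4v_1^2 s(1+s+t)\bigl(|v_2|^2-\real(v_2)^2\bigr)$ equals exactly the quantity $\Delta$ defined in \eqref{condition C2}, whence $\imag(v_2) = \pm \sqrt\Delta/(2v_1\sqrt{s(1+s+t)})$; the existence of a real solution therefore requires, and is equivalent to, $\Delta\ge0$. Plugging $v_2$ back into $v_3 = (\sqrt{1+s+t}\,v_1 - \sqrt{s}\,v_2)/\sqrt{t}$ and simplifying yields the stated formula for $v_3$ with the opposite sign in front of $i\sqrt\Delta$.

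For the converse, assume $v_1,|v_2|,|v_3|>0$ satisfy $-v_1^2+|v_2|^2+|v_3|^2=1$ and $\Delta\ge0$, and define $v_2, v_3$ by the two displayed formulas. Adding $\sqrt s\,v_2$ and $\sqrt t\,v_3$ the $\pm i\sqrt\Delta$ terms cancel and the real parts collapse to $2(1+s+t)v_1^2/(2v_1\sqrt{1+s+t}) = \sqrt{1+s+t}\,v_1$, which is precisely the third equation in \eqref{condition u e v}. It remains to verify that the computed moduli of $v_2, v_3$ coincide with the prescribed $|v_2|$ and $|v_3|$. For $v_2$ this is immediate from the identity $|v_2|^2-\real(v_2)^2 = \Delta/(4v_1^2s(1+s+t))$; for $v_3$ I would use the algebraic identity $B^2 + \Delta = 4(1+s+t)v_1^2\,t|v_3|^2$, where $B := (1+s+t)v_1^2 + t|v_3|^2 - s|v_2|^2$, which follows from $B^2 - A^2 = 4(1+s+t)v_1^2(t|v_3|^2 - s|v_2|^2)$ with $A$ the analogous combination appearing in the $v_2$-numerator.

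No real obstacle is expected: the argument is a transparent two-variable real system with one quadratic constraint, and the only delicate point is the bookkeeping needed to recognize $\Delta$ as the discriminant of the real quadratic for $\imag(v_2)$ and to confirm that the prescribed $|v_3|$ is automatically reproduced. This last check (the identity $B^2+\Delta = 4(1+s+t)v_1^2 t|v_3|^2$) is the only non-obvious piece of algebra, and it is a direct consequence of the symmetric roles played by $(s,|v_2|)$ and $(t,|v_3|)$ in the expression for $\Delta$.
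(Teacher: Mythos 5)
Your proposal is correct and follows essentially the same route as the paper: both extract $\real v_2$ from the linear relation $u_2v_2+u_3v_3=u_1v_1$ combined with the prescribed modulus $|v_3|$, recover $\imag v_2$ up to sign with discriminant $\Delta$, and rely on the same algebraic identity (your $B^2+\Delta=4(1+s+t)v_1^2t|v_3|^2$ is exactly the paper's ``tautological equality'') to handle $v_3$. The only difference is organizational: by substituting $v_3=(\sqrt{1+s+t}\,v_1-\sqrt s\,v_2)/\sqrt t$ directly you get the opposite sign in front of $i\sqrt\Delta$ for free, whereas the paper solves for $v_2$ and $v_3$ symmetrically and then argues $\sigma_2=-\sigma_1$ from $u_2\imag v_2+u_3\imag v_3=0$.
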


\medskip

{\bf Proof.} The third equality in \eqref{condition u e v} implies that
$\real\, v_3=\displaystyle\frac{u_1v_1-u_2\real\,v_2}{u_3}$ and
$\imag\, v_3=-\displaystyle\frac{u_2\imag\, v_2}{u_3}$. So,
$$|v_3|^2=\displaystyle\frac{(u_1v_1-u_2\real\, v_2)^2+u_2^2(\imag\,
v_2)^2}{u_3^2}=\frac{u_1^2v_1^2-2v_1u_1u_2\real\,
v_2+u_2^2|v_2|^2}{u_3^2},$$
that is,
$\real\, v_2=\displaystyle\frac
{-u_3^2|v_3|^2+u_1^2v_1^2+u_2^2|v_2|^2}{2v_1u_1u_2}.$
It follows that
$$\imag\, v_2=\frac{\sigma_1}{2v_1u_1u_2}\sqrt
{4v_1^2|v_2|^2u_1^2u_2^2-\big(-u_3^2|v_3|^2+u_1^2v_1^2+
u_2^2|v_2|^2\big)^2},$$
where $\sigma_1\in\{-1,1\}$. By symmetry,
$\real\, v_3=\displaystyle\frac
{-u_2^2|v_2|^2+u_1^2v_1^2+u_3^2|v_3|^2}{2v_1u_1u_3}$
and
$$\imag\, v_3=\frac{\sigma_2}{2v_1u_1u_3}\sqrt
{4v_1^2|v_3|^2u_1^2u_3^2-\big(-u_2^2|v_2|^2+u_1^2v_1^2+
u_3^2|v_3|^2\big)^2},$$
where $\sigma_2\in\{-1,1\}$. Taking $r\coloneq u_2^2|v_2|^2-u_3^2|v_3|^2$ in
the tautological equality
$$4v_1^2u_1^2r-(u_1^2v_1^2+r)^2+(u_1^2v_1^2-r)^2=0,$$
we obtain
$$4v_1^2|v_2|^2u_1^2u_2^2-\big(-u_3^2|v_3|^2+u_1^2v_1^2+
u_2^2|v_2|^2\big)^2=4v_1^2|v_3|^2u_1^2u_3^2-\big(-u_2^2|v_2|^2+
u_1^2v_1^2+u_3^2|v_3|^2\big)^2.$$
It follows from $u_2\imag v_2+u_3\imag v_3=0$ that $\sigma_2=-\sigma_1$.

A straightforward computation implies the converse.\hfill$_\blacksquare$

\medskip

Summarizing: Lemmas \ref{lemma 1 for I2 regular} and \ref{lemma 2 for I2 regular} imply that Conditions \ref{conditions C1} and \ref{condition C2} are valid for an isometry $I_2$ satisfying the trace equation.
Reciprocally, given $(s,t)\in\mathbb R_{>0}\times\mathbb R_{>0}$ such that
\ref{conditions C1} holds, we take the point $u$ with coordinates $u_1\coloneq \sqrt{1+s+t}$,
$u_2\coloneq \sqrt s$, and $u_3\coloneq \sqrt t$. Clearly, $\langle u,u\rangle=-1$.
The equations in Lemma \ref{lemma 1 for I2 regular} as well as the second equation in \eqref{condition u e v}
provide the positive numbers $v_1,|v_2|,|v_3|$. Suppose that \ref{condition C2}
holds. Choosing a sign in the formulae for $v_2$ and $v_3$ in Lemma
\ref{lemma 2 for I2 regular}, we get the point $v$ with coordinates $v_1,v_2,v_3$ such that
$\langle v,v\rangle=1$. By Lemma \ref{lemma 2 for I2 regular}, $\langle u,v\rangle=0$. We have
just constructed an isometry $I_2$ with the fixed points $u,v$ (and the
third fixed point uniquely determined by $u,v$) satisfying the trace
equation. The coordinates $s,t$ are geometrical invariants of the
representation $\rho: G\to\PU$, $\rho:g_i\mapsto I_i$ ($G$ is the
turnover group defined in Section \ref{turnover definition}). Indeed, $\ta(u,L_1)=1+s$ and
$\ta(u,L_2)=1+t$, where $L_1,L_2$ stand for the $I_1$-stable complex
geodesics. In other words, we parameterized the generic part of the
representation space in question. Let us briefly discuss the role of
the sign in the formulae for $v_2,v_3$.

The isometries $I_2$ and $I_2'$ determined by the different choices of
sign in the formulae for $v_2,v_3$ in Lemma \ref{lemma 2 for I2 regular} are related as
follows. Let $u,v,w$ and $u,v',w'$ stand respectively for the fixed
points of $I_2$ and $I_2'$ ($w,w'$ are the points in $\mathbb P(u^\perp)$
orthogonal respectively to $v,v'$). In the basis $\mathcal B$, the
reflection $R$ in the $\mathbb R$-plane $\mathbb P(W)$ ($W\subset V$ is spanned
over $\mathbb R$ by $\mathcal B$) corresponds to the complex conjugation of
coordinates. Obviously, $u\in\mathbb P(W)$, $Ru=u$, and $Rv=v'$. This
implies that
$\langle Rw,u\rangle=\overline{\langle
w,Ru\rangle}=\overline{\langle w,u\rangle}=0$,
i.e., $Rw\in\mathbb P(u^\perp)$. Analogously, $\langle Rw,v'\rangle=0$. We
obtain $Rq=q'$. In other words, the fixed points of $I_2'$ are those of
$I_2$ reflected in $\mathbb P(W)$. Since the eigenvalues of $I^R\coloneq RIR^{-1}$ are
complex conjugate to those of $I$, we obtain $I_1^R=I_1^{-1}$ and
$I_2^R={I'_2}^{-1}$. So, the group generated by $I_1,I'_2,I'_3$
comes from the one generated by $I_1^{-1},I_2^{-1},I_2^{-1}I_3^{-1}I_2$.

\bigskip

\subsubsection{ Special case: $I_2$ is a rotation about a point in $\HH_\CC^2$.} 
Let $u\in\HH_\CC^2$ denote the center of $I_2$ with corresponding eigenvalue $\beta_1$. We fix a basis $\mathcal B$ in $V$ of signature $-++$ consisting of eigenvectors of $I_1$ with corresponding eigenvalues $\alpha_1,\alpha_2,\alpha_3$. In this basis, we write
$u=\left[\smallmatrix u_1\\u_2\\u_3\endsmallmatrix\right]$. We can
assume that $u_1,u_2,u_3\ge0$ and $\langle u,u\rangle=-1$. In other
words, $-u_1^2+u_2^2+u_3^2=-1$.

Let us write down the trace equation
\eqref{trace equation} in the basis $\mathcal B$. Define $\beta_{ij}\coloneq \beta_i-\beta_j$ and
$\alpha_{ij}\coloneq \alpha_i-\alpha_j.$ In particular, $\beta_{23}=0$. It
follows from \eqref{general formula for elliptic isometry} that
$$I_1=\left[\smallmatrix\alpha_1&0&0\\0&\alpha_2&0\\0&0&\alpha_3
\endsmallmatrix\right],\qquad
I_2=\left[\smallmatrix
u_1^2\beta_{13}+\beta_3&-u_2u_1\beta_{13}&-u_3u_1\beta_{13}\\
u_1u_2\beta_{13}&-u_2^2\beta_{13}+\beta_3&-u_3u_2\beta_{13}\\
u_1u_3\beta_{13}&-u_2u_3\beta_{13}&-u_3^2\beta_{13}+\beta_3
\endsmallmatrix\right].$$
The trace equation takes the form
$$\alpha_1(u_1^2\beta_{13}+\beta_3)+
\alpha_2(-u_2^2\beta_{13}+\beta_3)+
\alpha_3(-u_3^2\beta_{13}+\beta_3)=\sum_{i=1}^3\gamma_i$$
which is equivalent to
$$u_2^2\alpha_{12}+u_3^2\alpha_{13}=k,\quad
k\coloneq \frac1{\beta_{12}}\big(\sum\limits_{i=1}^3\gamma_i-\alpha_1\beta_1-
\beta_2(\alpha_2+\alpha_3)\big).$$

\begin{lemma}\label{lemma 1 for I2 special}
The determinant of\/
$M\coloneq \left[\smallmatrix\real\alpha_{21}&\real\alpha_{31}\\
\imag\alpha_{21}&\imag\alpha_{31}\endsmallmatrix\right]$
does not vanish. The trace equation is equivalent to the equations
$$u_2^2\det M=\imag(\alpha_{13}\overline k),\qquad
u_3^2\det M=\imag(\alpha_{21}\overline k).$$	
\end{lemma}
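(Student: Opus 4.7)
The plan is to follow the Cramer's-rule strategy used in the proof of Lemma \ref{lemma 1 for I2 regular}. Matters simplify considerably here because $\beta_{23}=0$ has already eliminated the auxiliary vector $v$ from the picture, so no analogue of Lemma \ref{lemma 2 for I2 regular} will be needed.

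First I would verify that $\det M\ne 0$. One sees immediately that $\det M=\imag(\overline\alpha_{21}\alpha_{31})$, whose modulus equals twice the area of the triangle in $\CC$ with vertices $\alpha_1,\alpha_2,\alpha_3$. Since $I_1$ is regular elliptic, these three eigenvalues are pairwise distinct points on the unit circle, so the triangle is nondegenerate and $\det M\ne 0$.

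For the second assertion, the computation just preceding the lemma has already reduced the trace equation to the single complex identity $u_2^2\alpha_{12}+u_3^2\alpha_{13}=k$. Substituting $\alpha_{12}=-\alpha_{21}$ and $\alpha_{13}=-\alpha_{31}$ and splitting into real and imaginary parts turns this into the $2\times 2$ real linear system
\[
M\begin{bmatrix}u_2^2\\u_3^2\end{bmatrix}=-\begin{bmatrix}\real k\\\imag k\end{bmatrix}.
\]
Cramer's rule, combined with the elementary identity $\imag(z\bar k)=\imag z\,\real k-\real z\,\imag k$ applied to the two resulting $2\times 2$ determinants, then yields $u_2^2\det M=\imag(\alpha_{13}\bar k)$ and $u_3^2\det M=\imag(\alpha_{21}\bar k)$.

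No real obstacle is anticipated; the content is elementary linear algebra over $\CC$. The only care required is sign bookkeeping — the minus signs arising from $\alpha_{12}=-\alpha_{21}$ and $\alpha_{13}=-\alpha_{31}$ must propagate consistently through Cramer's rule in order to match the two displayed formulas exactly as stated.
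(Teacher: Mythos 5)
Your proposal is correct and follows essentially the same route as the paper: both reduce the trace equation to the single complex identity $u_2^2\alpha_{12}+u_3^2\alpha_{13}=k$, split into real and imaginary parts, and solve the resulting $2\times2$ system by Cramer's rule, with $\det M\ne0$ justified by the nondegeneracy of the triangle with vertices $\alpha_1,\alpha_2,\alpha_3$. The sign bookkeeping you flag works out exactly as you describe, since passing from $\alpha_{12},\alpha_{13}$ to $\alpha_{21},\alpha_{31}$ negates both columns and hence leaves the $2\times2$ determinant unchanged.
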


{\bf Proof.} The fact $\det M\ne0$ is proven exactly as in the
beginning of the proof of Lemma \ref{lemma 1 for I2 regular}. The trace equation is equivalent
to $u_2^2\real\alpha_{12}+u_3^2\real\alpha_{13}=\real k$ and
$u_2^2\imag\alpha_{12}+u_3^2\imag\alpha_{13}=\imag k$. Hence,
$$u_2^2\det M=\imag\alpha_{13}\real k-\real\alpha_{13}\imag
k=\imag(\alpha_{13}\overline k)$$
$$u_3^2\det M=\real\alpha_{12}\imag k-\imag\alpha_{12}\real
k=-\imag(\alpha_{12}\overline k).\eqno{_\blacksquare}$$

\medskip

By Lemma \ref{lemma 1 for I2 special}, the trace equation implies
$$\det M\imag(\alpha_{13}\overline k)\geqslant0,\qquad\det
M\imag(\alpha_{21}\overline k)\geqslant0.$$
Conversely, if the above inequalities hold, we obtain from Lemma \ref{lemma 1 for I2 special}
and from $-u_1^2+u_2^2+u_3^2=-1$ the negative point $u$ with
coordinates $u_1,u_2,u_3$. The corresponding isometry $I_2$ satisfies
the trace equation. Hence, the component of the space $\mathcal R$ of
conjugacy classes of representations $\rho:G\to\PU$ corresponding to the given conjugacy classes of
$I_1,I_2,I_3$ is either empty or a point. 

\smallskip

We have a similar result in
the case of rotation about a complex geodesic.

\subsubsection{Special case: $I_2$ is a rotation about a complex geodesic in $\HH_\CC^2$.}
Let $I_2$ be a rotation about the complex geodesic
$\mathbb P(v^\perp)$, where $v$ an eigenvector of $I_2$ for $\beta_2$. We
fix an orthogonal basis of eigenvectors of $I_1$ (the eigenvalues are
$\alpha_1,\alpha_2,\alpha_3$). In this basis, we write
$v=\left[\smallmatrix v_1\\v_2\\v_3\endsmallmatrix\right]$
and assume that $v_1,v_2,v_3\ge0$ and that $-v_1^2+v_2^2+v_3^2=1$. The
determinant of
$M\coloneq \left[\smallmatrix\real\alpha_{21}&\real\alpha_{31}\\
\imag\alpha_{21}&\imag\alpha_{31}\endsmallmatrix\right]$
does not vanish (see Lemma \ref{lemma 1 for I2 regular}) and the trace equation \eqref{trace equation} is equivalent
to the equations
$$v_2^2\det M=\imag(\alpha_{13}\overline k),\qquad
v_3^2\det M=\imag(\alpha_{21}\overline k),$$
where
$$k\coloneq \frac1{\beta_{12}}\big(\sum\limits_{i=1}^3\gamma_i-
\alpha_1\beta_2-\beta_1(\alpha_2+\alpha_3)\big).$$
The trace equation and the equation $-v_1^2+v_2^2+v_3^2=1$ imply
$$\det M\imag(\alpha_{13}\overline k)\ge0,\qquad\det
M\imag(\alpha_{21}\overline k)\ge0,\qquad\frac1{\det M}
\big(\imag(\alpha_{13}\overline k)+\imag(\alpha_{21}\overline
k)\big)\ge1.$$
Conversely, if the above inequalities hold, we obtain from the trace
equation and from the equation $-v_1^2+v_2^2+v_3^2=1$ the positive
point $v$ with coordinates $v_1,v_2,v_3$.

\newpage

\section{Discreteness: fundamental quadrangle of bisectors}
\label{section discreteness}
\subsection{Quadrangle of bisectors}\label{subsection quadrangle of bisectors}

\begin{wrapfigure}{r}{5. cm}
	\vspace*{-0.8cm}
	\centering
	\includegraphics[scale=.7]{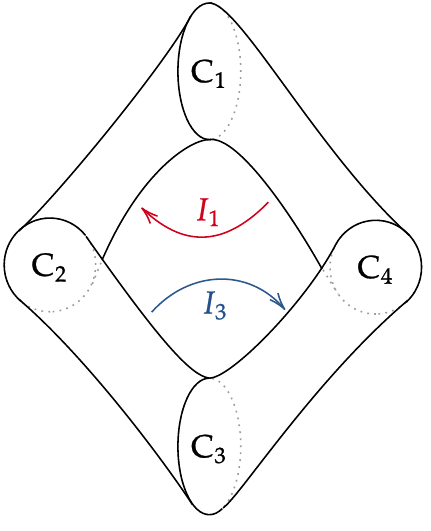}
	\vspace*{-0.8cm}
	\caption{Quadrangle $\mathcal Q$}
	\label{quadrangle of bisectors}
\end{wrapfigure}

Following \cite{discbundles}, we introduce the
{\it quadrangle\/} of bisectors associated to representations $\rho:G(n_1,n_2,n_3)\to\PU$ discussed in Section \ref{section generic representations}. We denote $G(n_1,n_2,n_3)$ by $G$. The quadrangles will depend on the parameters outlined in Section \ref{section generic representations}. In the case of generic representations where $\rho(g_1),\rho(g_2),\rho(g_3)$ are regular, these parameters are $s,t$ (see Remark \ref{remark alg}). We expect quadrangles of bisectors to bound fundamental polyhedra for discrete actions of $G$ on $\HH_\CC^2$ and the quotient $\HH_\CC^2/G$ to be a disc orbibundle over an orbifold (in our case, a sphere with three cone points). Passing to a finite index subgroup of $G$, one arrives at a complex hyperbolic disc bundle over a closed orientable surface (this comes from the fact that a finitely generated Fuchsian group always has a finite index torsion-free subgroup). The representations will provide discrete representations when the parameters have certain values, obtained computationally and outlined in Section~\ref{section computational results}.

\smallskip

We remind here a few definitions from \cite{discbundles}.

In order to orient a bisector $B$ we only need to orient its real spine (since the fibers are complex, hence, naturally oriented). An oriented bisector $B$ divides ${\overline\HH_\CC^2}$ into two {\it half-spaces\/} (closed $4$-balls) $K^+$ and $K^-$, where $K^+$ stands for the half-space lying on the side of the normal vector to $B$. See Figure \ref{hypersurface}.
\begin{figure}[H]
	\centering
        \vspace*{-0.3cm}
	\includegraphics[scale=.7]{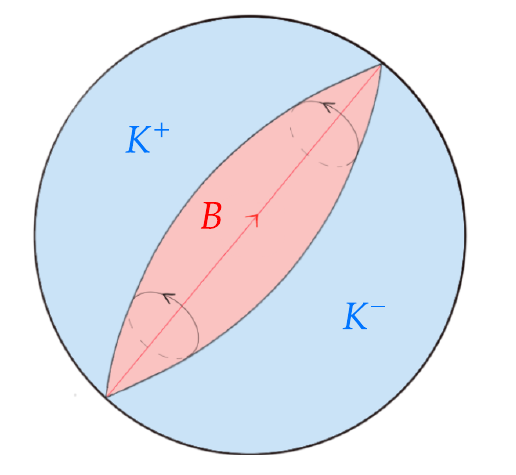}
	\caption{Orienting the real spine fixes a unique orientation of the bisector since the slices are naturally oriented. The half-space $K^+$ is the one on the side of the normal vector.}
	\label{hypersurface}
\end{figure}

Let $B_1=B_1[C_1,C_2]$ and $B_2=B_2[C_1,C_3]$ be two oriented segments of bisectors with a common slice $C_1$ such that the corresponding full bisectors are transversal along that slice. The {\it sector\/} from $B_1$ to $B_2$ is defined to be either $K_1^+\cap K_2^-$ (when the oriented angle from $B[C_1,C_2]$ to $B[C_1,C_3]$ at a point $c\in C_1$ is smaller than $\pi$) or
$K_1^+\cup K_2^-$ (when the oriented angle from $B[C_1,C_2]$ to $B[C_1,C_3]$ at a point $c\in C_1$ is greater than $\pi$).
Note that, while such oriented angle does depend on the point $c$, it cannot equal $\pi$ due to transversality. See Figure \ref{sector}.

\begin{figure}[H]
	\centering
        \vspace*{-.8cm}
	\includegraphics[scale=.6]{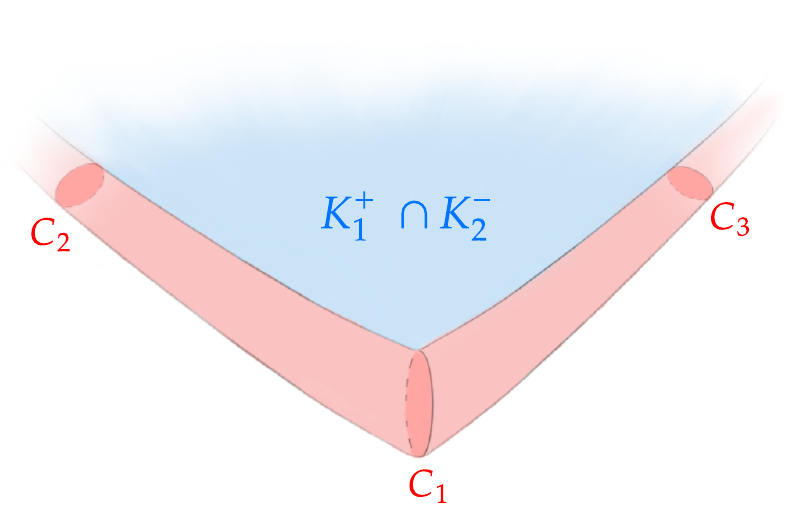}
	\caption{Sector between given by $B_1$ and $B_2$ when the angle between them is smaller than $\pi$.}
	\label{sector}
\end{figure}

Given pairwise ultraparallel complex geodesics $C_1,C_2,C_3$, the oriented triangle of bisectors $\Delta(C_1,C_2,C_3)$ is simply the union $B[C_1,C_2]\cup B[C_2,C_3]\cup B[C_3,C_1]$ of oriented segments of bisectors. Each such segment is a side of the oriented triangle and each of the complex geodesics $C_1,C_2,C_3$ is a vertex of the triangle. The triangle is {\it transversal\/} if the full bisectors containing its sides intersect transversally along the common slices.

Given three ultraparallel complex geodesics $C_1,C_2,C_3$ there are two possible orientations for a triangle of bisectors with vertices $C_1,C_2,C_3$. Assuming that such a triangle is transversal, its {\it counterclockwise orientation} is the one providing an oriented angle from $B[C_3,C_1]$ to $B[C_1,C_2]$ in~$(0,\pi)$. By \cite[Lemma 2.14]{discbundles}, this implies that oriented angles from $B[C_1,C_2]$ to $B[C_2,C_3]$ and from $B[C_2,C_3]$ to $B[C_3,C_1]$ lie in $(0,\pi)$ as well; moreover, in this case, each side of the triangle is contained in the sector determined by the other two.

Let $\Delta(C_1,C_2,C_4)$ and $\Delta(C_3,C_4,C_2)$ be counterclockwise oriented transversal triangles of bisectors sharing a common side (see Figure \ref{quadrangleadjacent}). We say these triangles are {\it transversally adjacent\/} if the sector at $C_1$ contains a point of $C_3$ and the full bisectors containing the segments $B[C_1,C_2]$ and $B[C_2,C_3]$ (respectively, $B[C_3,C_4]$ and $B[C_4,C_1]$) are transversal at $C_2$ (respectively, at $C_4$). In particular (see \cite[Lemma 2.14]{discbundles}), this implies that $\Delta(C_3,C_4,C_2)$ is contained in the sector at $C_1$; furthermore, $\Delta(C_3,C_4,C_2)$ and $\Delta(C_1,C_2,C_4)$ lie in opposite sides of the full bisector containing $B[C_2,C_4]$.
\begin{figure}[H]
	\centering
	\includegraphics[scale=.6]{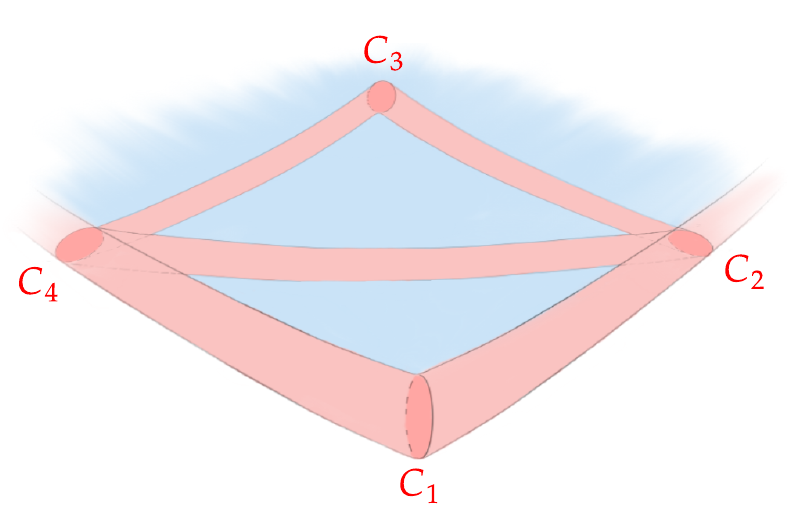}
	\caption{Transversally adjacent triangles.}
	\label{quadrangleadjacent}
\end{figure}
\medskip

Let $I_1,I_2,I_3$ be elliptic isometries as in Section \ref{section generic representations}. Let $c_j,p_j\in\mathbb P_\CC(V)$ denote pairwise orthogonal distinct fixed points of $I_j$ with $c_j\in\HH_\CC^2$. In particular, the $p_j$'s are positive. We also define the points $c_4 \coloneq  I_1^{-1}c_2$ and $p_4 \coloneq  I_1^{-1}p_2$ and the complex geodesics
\begin{equation*}
C_1\coloneq \mathbb P(p_1)^\perp\cap\overline{\HH}_\CC^2, \qquad
C_2\coloneq \mathbb P(p_2)^\perp\cap\overline{\HH}_\CC^2,
\qquad
C_3\coloneq \mathbb P(p_3)^\perp\cap\overline{\HH}_\CC^2, \qquad
C_4\coloneq \mathbb P(p_4)^\perp\cap\overline{\HH}_\CC^2.
\end{equation*}

Using the relation $I_3I_2I_1 = 1$ we obtain
$p_4=I_1^{-1}p_2=I_3p_2$. Therefore $C_4 = I_1^{-1}C_2 = I_3 C_2$.
Note that, by Remark \ref{simplecgproperties}, if $C_1$ and $C_2$ are ultraparallel, $C_1\!\!\parallel\!\!C_2$, then $C_1\!\!\parallel\!\!C_4$ since $\ta(p_1,p_2)=\ta(I_1^{-1}p_1,I_1^{-1}p_2) =\ta(p_1,p_4)>1$. Similarly, $C_3\!\!\parallel\!\!C_2$ implies $C_3\!\!\parallel\!\!C_4$. So, if $C_1\!\!\parallel\!\!C_2$, $C_3\!\!\parallel\!\!C_2$, and\/ $C_2\!\!\parallel\!\!C_4$, we get the oriented triangles of bisectors $\Delta(C_1,C_2,C_4)$ and $\Delta(C_3,C_4,C_2)$.

\subsection{Quadrangle conditions}\label{subsection quadrangle conditions} A representation $\rho:G(n_1,n_2,n_3)\to\PU$, $g_j\mapsto I_j$, satisfies the\/ {\rm quadrangle} conditions if

\begin{itemize}
	\item[(Q1)] $C_1\!\!\parallel\!\!C_2$, $C_3\!\!\parallel\!\!C_2$, and\/ $C_2\!\!\parallel\!\!C_4$.
	\item[(Q2)] The triangles $\Delta(C_1,C_2,C_4),\Delta(C_3,C_4,C_2)$ are
	transversal and counterclockwise-oriented.
	\item[(Q3)] The triangles $\Delta(C_1,C_2,C_4),\Delta(C_3,C_4,C_2)$ are
	transversally adjacent;
	\item[(Q4)]  The oriented angle from $\B[C_1,C_4]$ to $\B[C_1,C_2]$ at $c_1$
	equals $\displaystyle\frac{2\pi}{n_1}$; the oriented angle from
	$\B[C_3,C_2]$ to $\B[C_3,C_4]$ at $c_3$ equals
	$\displaystyle\frac{2\pi}{n_3}$; the sum of the oriented angle from
	$\B[C_2,C_1]$ to $\B[C_2,C_3]$ at $c_2$ with the oriented angle from
	$\B[C_4,C_3]$ to $\B[C_4,C_1]$ at $c_4$ equals
	$\displaystyle\frac{2\pi}{n_2}$.
\end{itemize}

A representation satisfying the quadrangle conditions gives rise to the {\it quadrangle of bisectors}
$$\mathcal Q: = B[C_1,C_2]\cup B[C_2,C_3]\cup B[C_3,C_4]\cup B[C_4,C_1].$$
The quadrangle $\mathcal Q$ bounds polyhedron $Q$ which is on the side of the normal vectors of the oriented segments of bisectors. Indeed, by \cite[Lemma 2.13]{discbundles}, there are no intersections between those segments of bisectors besides the common slices.
\subsection{The quadrangle conditions explicitly}\label{quadrangle revisited}
As in Section \ref{section generic representations}, let $\alpha_i$, $\beta_i$, $\gamma_i^{-1}$, $i=1,2,3$, stand respectively for the eigenvalues of $I_1$, $I_2$, $I_3$ such that
\begin{gather*}
I_1 (c_1) = \alpha_1 c_1,\, I_2 (c_2) = \beta_1 c_2, \,I_3 (c_3) = \gamma_1^{-1} c_3,\\
I_1 (p_1) = \alpha_2 p_1, \,I_2 (p_2) = \beta_2 p_2, \,I_3 (p_3) = \gamma_2^{-1} p_3.
\end{gather*}

Since $\det I_k = 1$, we must have $\alpha_1 \alpha_2 \alpha_3 = \beta_1 \beta_2 \beta_3 = \gamma_1 \gamma_2 \gamma_3 = 1$.

 Let us formulate the quadrangle conditions from Section \ref{subsection quadrangle conditions} in terms of algebraic formulas that are used for practical computations.

Quadrangle condition (Q1) asks that the complex geodesics $C_1,C_2,C_3,C_4$ are pairwise ultraparallel:

\begin{equation}\label{Q1}\tag{Q1}
\ta(p_1,p_2) = \ta(p_1,p_4)>1, \quad \ta(p_2,p_3)>1,\quad \textrm{and}\quad \ta(p_3,p_2) = \ta(p_3,p_4)>1
\end{equation}
(see Remark \ref{simplecgproperties}).

Assuming (Q1) we can define the bisectors segments $\B[C_1,C_2]$, $\B[C_2,C_3]$, $\B[C_2,C_3]$, and $\B[C_3,C_4]$.

Condition (Q2) says that the triangles of bisectors $\Delta(C_1,C_2,C_4)$ and $\Delta(C_2,C_3,C_4)$ are transversal and counterclockwise oriented. Accordingly with \cite[Criterion 2.27]{discbundles}, this is equivalent to the inequalities
\begin{equation}\label{Q2}\tag{Q2}
\begin{aligned}
&\epsilon_0^2 t^2 + s^2 + t^2 < 1+2t^2s\epsilon_0, \quad \epsilon_0^2 s^2 + 2t^2  < 1+2t^2s\epsilon_0,  \quad \epsilon_1<0,
\\ &\epsilon_0'^2 t'^2 + s^2 + t'^2 < 1+2t'^2s\epsilon_0', \quad \epsilon_0'^2 s^2 + 2t'^2  < 1+2t'^2s\epsilon_0',  \quad \epsilon_1'<0,
\end{aligned}
\end{equation}
where
\begin{gather*}
t = \sqrt{\ta(p_1,p_2)}, \quad t'=\sqrt{\ta(p_3,p_2)}, \quad s=\sqrt{\ta(p_2,p_4)},\\
\epsilon_0+\epsilon_1 i : = \frac{\sigma}{|\sigma|},\quad \textrm{where}\quad \sigma: = \langle p_1,p_2 \rangle \langle p_2,p_4 \rangle \langle p_4,p_1 \rangle,\\
\epsilon_0'+\epsilon_1' i : = \frac{\sigma'}{|\sigma'|},\quad \textrm{where}\quad \sigma': = \langle p_2,p_3 \rangle \langle p_3,p_4 \rangle \langle p_4,p_2 \rangle.
\end{gather*}

The quadrangle condition $\textrm{(Q3)}$ asserts that the triangles $\Delta(C_1,C_2,C_4)$ and $\Delta(C_3,C_4,C_2)$ are transversally adjacent. It is guaranteed by the conditions $\textrm{(Q3.1),\,(Q3.2),\,(Q3.3)}$ below. Condition (Q3.1) concerns the transversality of the bisectors $B[C_1,C_2]$ and $B[C_2,C_3]$,
\begin{equation}\label{q31}\tag{Q3.1}
\left\vert \real\left( \frac{\langle p_3,p_1 \rangle \langle p_2,p_2 \rangle }{\langle p_3,p_2 \rangle \langle p_2,p_1 \rangle}\right) - 1 \right\vert < \sqrt{1 - \frac{1}{\ta(p_2,p_3)}}\sqrt{1 - \frac{1}{\ta(p_2,p_1)}}.
\end{equation}
(see \cite[Criterion 3.3]{discbundles}). Similarly, (Q3.2) states the transversality of the bisectors $B[C_1,C_4]$ and $B[C_4,C_3]$,
\begin{equation}\label{q32}\tag{Q3.2} \left\vert \real\left( \frac{\langle p_1,p_3 \rangle \langle p_4,p_4 \rangle }{\langle p_1,p_4 \rangle \langle p_4,p_3 \rangle}\right) - 1 \right\vert < \sqrt{1 - \frac{1}{\ta(p_4,p_1)}}\sqrt{1 - \frac{1}{\ta(p_4,p_3)}}.
\end{equation}
Finally, (Q3.3) implies that $c_3$ belongs to the interior of the sector at $C_1$ of the triangle $\Delta(C_1,C_2,C_4)$:
\begin{equation}\label{q33}\tag{Q3.3}
\imag \left( \frac{\langle p_1,c_3 \rangle \langle c_3,p_2 \rangle}{\langle p_1,p_2 \rangle }\right)> 0 \quad \textrm{and} \quad \imag \left( \frac{\langle p_4,c_3 \rangle \langle c_3,p_1 \rangle}{\langle p_4,p_1 \rangle }\right) > 0
\end{equation}
(see \cite[Lemma 3.5]{discbundles}).

Consider the polyhedron $Q$ bounded in $\HH_\CC^2$ by the quadrangle
\[\mathcal Q \coloneq B[C_1,C_2]\cup B[C_2,C_3]\cup B[C_3,C_4]\cup B[C_4,C_1].\]

It follows from \cite[Lemma 3.4]{discbundles} that condition $(\textrm{Q4})$ translates, in terms of the eigenvalues $\alpha_i,\beta_i,\gamma_i$, into
\begin{equation}\label{q4}\tag{Q4}
\alpha_2/\alpha_1=\exp(-2\pi i/n_1),\quad
\beta_2/\beta_1=\exp(-2\pi i/n_2),\quad
(\gamma_2/\gamma_1)^{-1}=\exp(-2\pi i/n_3)
\end{equation}

\subsection{Discreteness}

Let $\rho:G(n_1,n_2,n_3)\to\PU$, $g_j\mapsto I_j$ be a representation satisfying the quadrangle conditions and let $\mathcal Q$ be the quadrangle of $\rho$ described in the previous Section.

Applying \cite[Theorem 3.2]{poincareth} we will show that $Q\cap\HH_\CC^2$ is a fundamental region for the action of the group $K\coloneq \langle I_1,I_3 \rangle$, the image of $\rho$. Remember that $I_1,I_3$ satisfy the relations $I_1^{n_1}=I_3^{n_3}=(I_3^{-1}I_1^{-1})^{n_2}=1$ in $\PU$. The main idea is to prove that, given a point $x$ in the polyhedron $Q$, there are corresponding copies of the polyhedron $Q$ that tessellate a (small) ball centered at $x$. When $x$ belongs to the interior of $Q$, the fact is immediate; when it lies in the interior of a side, it follows from the fact that the elliptic isometries $I_1$ and $I_3$ send the interior of $Q$ to its exterior. Finally, when $x$ is in a vertex, it is enough to understand the case $x=c_j$. Here, the tessellation follows from an infinitesimal conditional: the local tessellation of the complex geodesic normal to the vertex at $c_j$ (see Figure \ref{localtessellation}). For more details, see \cite{poincareth}. This leads to a tessellation of a neighborhood of $Q$ and, by \cite[Lemma 2.10]{discbundles}, such a tessellation provides a tessellation of a metric neighborhood of $Q$. By \cite[Theorem 3.2]{poincareth}, $K$ is discrete.

\begin{figure}[H]
	\centering
	\begin{minipage}[b]{.5\textwidth}
		\centering
		\includegraphics[scale = .6]{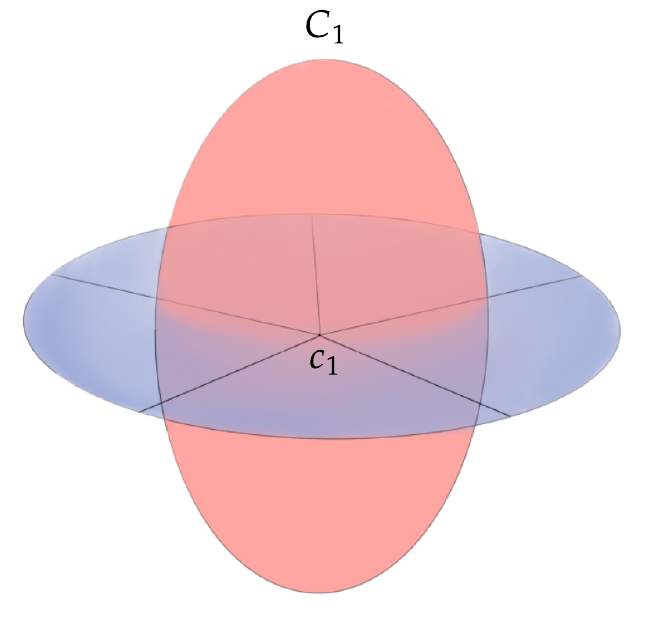}
		\caption*{(a)}
	\end{minipage}%
	\begin{minipage}[b]{.5\textwidth}
		\centering
		\includegraphics[scale = .74]{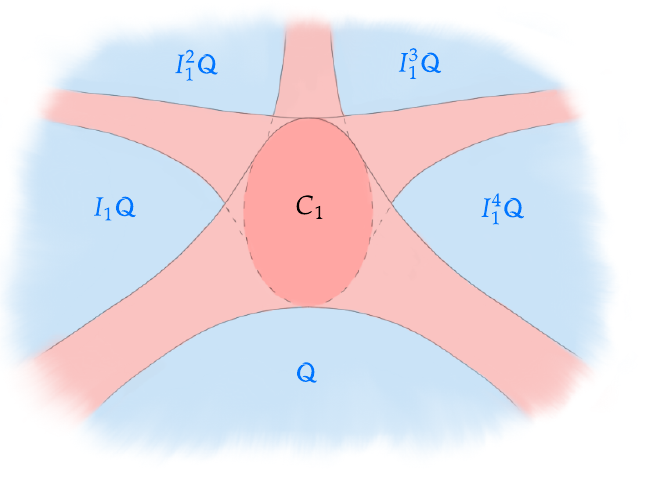}
		\caption*{(b)}
	\end{minipage}%

	\caption{\textbf{(a)} Tessellation of the complex geodesic normal to $C_1$ at $c_1$, and \textbf{(b)} tessellation around the vertex $C_1$. In both cases, $I_1$ has order $5$.}
 \label{localtessellation}
\end{figure}

\begin{thm} \label{teo: tessalation}
The group\/ $K$ is discrete and\/ $Q$ is a fundamental domain for its action on\/ $\HH_\CC^2$.
\end{thm}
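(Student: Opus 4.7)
The plan is to apply the Poincar\'e Polyhedron Theorem \cite[Theorem 3.2]{poincareth} to the polyhedron $Q$ with side-pairings induced by $I_1$ and $I_3$. First I would identify the side-pairings explicitly: since $I_1$ is elliptic of order $n_1$ fixing $c_1$ and the complex geodesic $C_1$, and the oriented angle from $B[C_1,C_4]$ to $B[C_1,C_2]$ at $c_1$ is $2\pi/n_1$ by condition (Q4), the isometry $I_1$ pairs the segment $B[C_1,C_4]$ with $B[C_1,C_2]$; similarly $I_3$ pairs $B[C_3,C_2]$ with $B[C_3,C_4]$. Consistency with $I_1^{-1}C_2=C_4$ and $I_3C_2=C_4$ ensures these pairings are well defined on the bisector segments (and not only on their complex spines) because each segment is uniquely determined by its pair of slice complex geodesics.

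Next I would verify the local tessellation of $\overline{Q}$ about every point, which is the content of the hypotheses of \cite[Theorem 3.2]{poincareth}. Interior points are immediate; points on the interior of a side are handled because the side-pairing isometry sends the interior of $Q$ to its exterior (normal-vector orientation of $\partial Q$). For a vertex point the only nontrivial case, by the structure of the quadrangle, reduces to the four vertex complex geodesics $C_1,C_2,C_3,C_4$ and, in fact, to a single generic slice point $c_j$ on each, since $C_j$ is a vertex of the polyhedron transverse to the adjacent bisector pair (conditions (Q2)--(Q3)). At such a $c_j$, the tessellation around $c_j$ inside the complex geodesic normal to $C_j$ controls the local picture, exactly as sketched after (Q4): at $c_1$ the cycle is $\{I_1^k\}_{k=0}^{n_1-1}$ with total angle $n_1\cdot(2\pi/n_1)=2\pi$ and cycle relation $I_1^{n_1}=1$; at $c_3$, analogously with relation $I_3^{n_3}=1$; while $c_4=I_1^{-1}c_2$ is identified with $c_2$ by $I_1$ into a single cycle whose total angle is $n_2$ times the sum in (Q4), giving $2\pi$ and cycle relation $(I_3^{-1}I_1^{-1})^{n_2}=I_2^{n_2}=1$.

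Having the infinitesimal tessellation at every point of $\partial Q$, I would then invoke \cite[Lemma 2.10]{discbundles} to upgrade this infinitesimal tessellation to a tessellation of an honest metric neighborhood of $Q$ by translates $g\overline{Q}$, $g\in K_n$. At this stage the hypotheses of \cite[Theorem 3.2]{poincareth} are met: $Q$ is a locally finite polyhedron with side-pairings and cycle relations exactly matching the abstract presentation $\langle I_1,I_3\mid I_1^{n_1}=I_3^{n_3}=(I_3^{-1}I_1^{-1})^{n_2}=1\rangle$. The conclusion is that $K_n$ is discrete and that $Q\cap\HH_\CC^2$ is a fundamental domain for its action.

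The main obstacle is not the group-theoretic matching of the cycle relations (this is built into (Q4) by design), but rather establishing the completeness/local-finiteness hypothesis cleanly: one must rule out accumulation of $K_n$-translates inside the metric neighborhood of $Q$ and check that the side-pairings are \emph{global} isometries of bisector segments, not merely maps between their complex spines. The first point is handled via \cite[Lemma 2.10]{discbundles}; the second follows because a bisector segment is determined by its two bounding slices together with the ambient real spine geodesic, and $I_1,I_3$ act by isometries sending slices to slices.
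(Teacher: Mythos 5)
Your proposal follows essentially the same route as the paper: both apply the Poincar\'e Polyhedron Theorem of \cite{poincareth} with the side-pairings $I_1:\B[C_1,C_4]\to\B[C_1,C_2]$ and $I_3:\B[C_3,C_2]\to\B[C_3,C_4]$, use \cite[Lemma 2.10]{discbundles} to pass from the infinitesimal tessellation to a metric neighborhood, and verify the three vertex cycles via (Q4), with the $C_2$--$C_4$ cycle of length $2n_2$ closing up by $(I_3^{-1}I_1^{-1})^{n_2}=I_2^{n_2}=1$. The argument is correct and matches the paper's proof in all essentials.
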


\begin{proof}
By \cite[Lemma 2.10]{discbundles}, we only need to verify Conditions (i) and (ii) of \cite[Theorem 3.2]{poincareth}. Since $I_1$ maps $\B[C_1,C_4]$ onto $\B[C_1,C_2]$ and $I_3$ maps $\B[C_3,C_2]$ onto $\B[C_3,C_4]$, the Condition (i) of \cite[Theorem 3.2]{poincareth} follows from the definition of counterclockwise-oriented transversal triangles. There are three (geometrical) cycles of vertices. The cycle of $C_1$ have total angle $2\pi$ at $c_1\in C_1$ by \cite[Lemma 3.4]{discbundles}. The same concerns the cycle of $C_3$ at $c_3\in C_3$.

The geometric cycle of $C_2$ has length $2n_2$ due to the relation $I_2^{n_2}=1$. Let us verify that the total angle at $c_2\in C_2$ is $2\pi$. Note that $I_3^{-1}$ sends $\B[C_3,C_4]$ onto $\B[C_3,C_2]$ and sends $\B[C_4,C_1]$ onto $\B[C_2,I_2C_1]$ (indeed, $I_3^{-1}C_1=I_3^{-1}I_1^{-1}C_1=I_2C_1$). Therefore, by the definition of counterclockwise-orientation, the sum of the interior angle from $\B[C_2,C_3]$ to $\B[C_2,C_1]$ at $c_2$ with the interior angle from $\B[C_4,C_1]$ to $\B[C_4,C_3]$ at~$c_4$ equals the angle from $\B[C_2,C_3]$ to $\B[C_2,I_2C_3]$ at $c_2$. By \cite[Lemma 3.4]{discbundles}, this angle equals $\Arg(\beta_1^{-1}\beta_2)=-2\pi/n_2$.
\end{proof}

\newpage

\section{Computational results}\label{section computational results}
We now present examples of discrete faithful representations $\rho: G(n_1,n_2,n_3) \to \PU$.
Their construction is delineated in Section \ref{section discreteness} and all found representations define complex hyperbolic disc orbibundles by design. We denote $\rho(g_j)$ by $I_j$.

As previously said, we consider positive eigenvectors $p_1$, $p_2$, $p_3$ with eigenvalues $\alpha_2,\beta_2,\gamma_2^{-1}$ for $I_1$, $I_2$, $I_3$, respectively, and the complex geodesics $$C_1\coloneq \PP(p_1^\perp)\cap \HH_\CC^2,\quad C_2\coloneq \PP(p_2^\perp)\cap \HH_\CC^2,\quad C_3\coloneq \PP(p_3^\perp)\cap \HH_\CC^2,\quad C_4\coloneq \PP(p_4^\perp)\cap \HH_\CC^2,$$
where $p_4 \coloneq  I_3 p_2 = I_1^{-1} p_2$. Assuming the tessellation conditions outlined in Section \ref{section discreteness}, the quadrangle of bisectors (see Figure \ref{quadrangle of bisectors}) $$\mathcal Q: = B[C_1,C_2]\cup B[C_2,C_3]\cup B[C_3,C_4]\cup B[C_4,C_1]$$
bounds a $4$-ball which tessellates $\HH_\CC^2$ under action of $\rho$. The foliation of the bisector segments by complex geodesics (see Figure \ref{bisector1}) extends to a disc foliation of the $4$-ball bounded by $\mathcal Q$, giving rise to the disc orbibundle structure (see Section \ref{section: Orbifold bundles and Euler number}). 

It is also important to point out that the conditions imposed on the representation for the tessellation are given by strict inequality (see Section \ref{quadrangle revisited}). Thus, given a representation defining a complex hyperbolic disc orbibundle in the character variety, all other representations in a small neighborhood also define complex hyperbolic disc orbibundles.

As described in Remark \ref{remark disc orbibundle}, complex hyperbolic disc orbibundles have three invariants in play: the Euler characteristic
$$\chi \coloneq  -1 + \frac{1}{n_1}+\frac{1}{n_2}+\frac{1}{n_3}$$
of the sphere with $3$ conic points $\SP^2(n_1,n_2,n_3)$,
the Euler number $e$ of the disc orbibundle, and the Toledo invariant $\tau$, following techniques developed in \cite{bot}. For more details, see Section \ref{subsection euler number} and Section \ref{section Toledo invariant}. An example is detailed in Section \ref{section:Explicit example with trivial Euler number}, showing how to analyze examples of complex hyperbolic disc orbibundles explicitly. 

Let us first consider generic representation $G(n_1,n_2,n_3) \to \PU$ where the isometries $I_1$, $I_2$, $I_3$ are regular elliptic with eigenvalues $\alpha_i$'s, $\beta_i$'s, and $\gamma_i^{-1}$'s, respectively, following the Remark~\ref{remark alg}. 
In our computational investigation, we set the following eigenvalues:
{\footnotesize	
\begin{alignat*}{3}
&\alpha_1 = \exp\left(\frac{2 (n_1-k_1) \pi i}{3 n_1}\right), \quad
&&\alpha_2 = \exp\left(\frac{2 (n_1 -k_1 - 3) \pi i }{3 n_1}\right), \quad
&&\alpha_3 = \exp\left( \frac{2 (2 k_1 + n_1 +3) \pi i}{3 n_1}\right),\\ 
&\beta_1 = \exp\left(\frac{-2k_2 \pi i}{3 n_2}\right),\quad
&&\beta_2 = \exp\left(\frac{2 (-k_2 - 3) \pi i}{3 n_2}\right), \quad
&&\beta_3 = \exp\left( \frac{2 (2 k_2+3) \pi i)}{3 n_2}\right),\\
&\gamma_1^{-1} = \exp\left(\frac{2 (2 n_3 d - k_3) \pi i}{3 n_3}\right), \quad
&&\gamma_2^{-1} = \exp\left(\frac{2 (2 n_3 d - k_3 - 3) \pi i}{3 n_3}\right), \quad
&&\gamma_3^{-1} = \exp\left( \frac{2 (2 n_3 d + 2 k_3 + 3) \pi i)}{3 n_3}\right), 
\end{alignat*}}%
where $n_1,n_2,n_3,k_1,k_2,k_3,d$ are integers. 

\begin{rmk} These parameters were chosen so $\alpha_2/\alpha_1 = \exp(-2\pi i/n_1)$, $\beta_2/\beta_1 = \exp(-2\pi i/n_2)$ and $(\gamma_2/\gamma_1)^{-1} = \exp(-2\pi i/n_3)$. This imposition follows from condition $(\textrm{Q4})$ of the quadrangle conditions. Due to the properties of the exponential, we can impose $0\leq k_i\leq n_i-1$ and $d=0,1,2$.

Since we are interested here in cases when the three isometries are regular elliptic, meaning the eigenvalues are pairwise distinct, we exclude here the cases where $k_i = n_i-2$ and $k_i = n_i-1$.
\end{rmk}

\begin{rmk} Following Proposition \ref{prop Euler number}, the Euler number formula is given by
$$e=f-\frac{k_1+1}{n_1}-\frac{k_2+1}{n_2}-\frac{k_3+1}{n_3}$$
for all examples. The parameter $f$ is defined in Section \ref{subsection meridional curve} and is known to value $1$ for all investigated examples. The Toledo invariant computation is outlined in the proof of Proposition~\ref{toledomod2}, where we prove that $\tau\equiv\displaystyle\frac{\Arg(\alpha_1\beta_1\gamma_1^{-1})}{\pi}\mod2$. Empirically, we obtain that for all examples $3\tau = 2(e+\chi)$. This formula holds true for all fundamental domains constructed from quadrangles of bisectors as observed in \cite[Theorem 8]{bot2}. 
\end{rmk}

We have found $3464$ lists $(n_1,n_2,n_3,k_1,k_2,k_3)$, with $3 \leq n_i \leq 15$ and $ 0\leq k_i\leq n_i-3$ and $d=1$, corresponding to generic discrete faithful representations with $I_1$, $I_2$, $I_3$ regular elliptic. 
Thus, we have $3464$ representations $G(n_1,n_2,n_3) \to \PU$ that lead to disc orbibundles over spheres with three conic points. These are examples of complex hyperbolic disc orbibundles $\HH_\CC^2/G \to \HH_\RR^2/G$ as discussed in Remark \ref{remark disc orbibundle}. Additionally, they are topologically distinct because they have different relative Euler numbers $e/\chi$. Of these $3464$ examples, $853$ distinct triples $(n_1,n_2,n_3)$ occur, thus our examples take place in $853$ different character varieties. Passing to finite index, each of these disc orbibundles over an orbifold gives rise to a disc bundle over a surface with the same relative Euler number $e/\chi$ and the same relative Toledo invariant $\tau/\chi$ (for details about these invariants see \cite{bot}). See Figure \ref{euler frequency non rigid}.

By Proposition \ref{prop main result}, each of these orbibundles belongs to a $2$-dimensional family of pairwise non-isometric (nevertheless, diffeomorphic) orbibundles, because the conditions defining these orbibundles define open regions in the $\PU$-character varieties. Clearly, examples in the same family share the same discrete invariants. The corresponding relative Euler numbers vary in the region $\{-1\}\cup (-0.63,0.49)$ and all examples satisfy the relation $3\tau = 2(e+\chi)$, which is a necessary condition for the existence of a holomorphic section (see \cite[Corollary 43]{bot}). 

\begin{figure}[H]
	\centering
	\includegraphics[scale =.8]{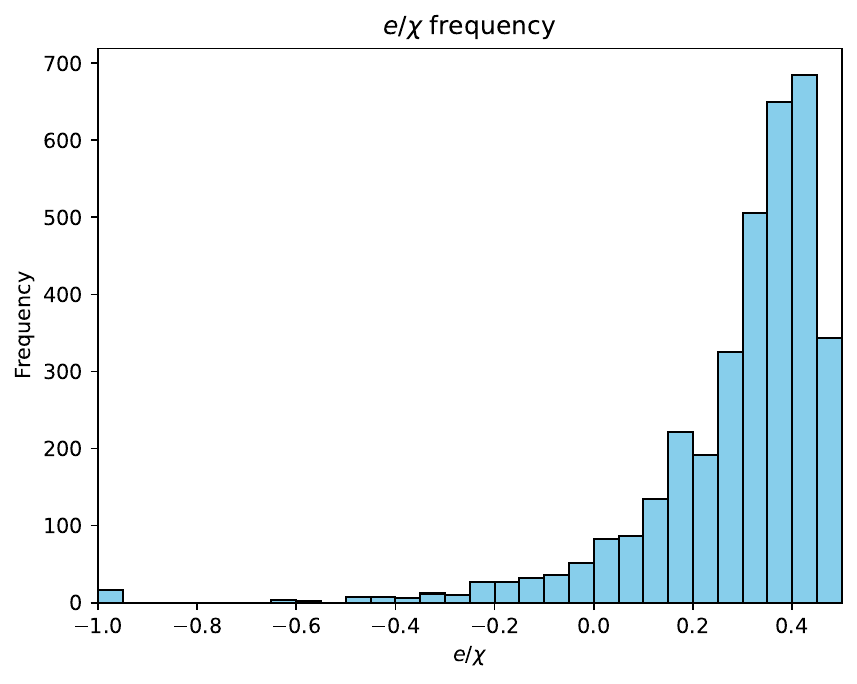}
	\caption{Histogram for the variable $e/\chi$. A total of $3464$ non-rigid disc orbibundles with different relative Euler numbers $e/\chi$ are found in $853$ character varieties.}
	\label{euler frequency non rigid}
\end{figure}

A couple of examples deserve to be highlighted: the cotangent bundle ($e/\chi = -1$) and the trivial bundle ($e/\chi = 0$). This seems to be the first instance of a complex hyperbolic structure on the cotangent bundle of a compact Riemann surface. As for the trivial bundle, an example has been constructed in \cite{agu} thus solving a long-standing conjecture \cite[Open Question 8.1]{eli}, \cite[p. 583]{gol2}, and \cite[p. 14]{sch}. Our construction is quite different from the one in \cite{agu}; while the latter produces, at the orbibundle level, a single rigid example, the former leads to several two-dimensional families of such trivial orbibundles. Finally, we also find non-rigid discrete representations corresponding to disc orbibundles whose relative Toledo invariant vanishes and which are not $\mathbb R$-Fuchsian\footnote{A representation of a orbifold group in $\PU$ with a stable real plane is called $\mathbb R$-Fuchsian.} because, for such examples, $\tr [I_1,I_2] \not \in \RR$ (it is well-known that $\mathbb R$-Fuchsian representations have vanishing Toledo invariant; in this regard, see also \cite{Gus}).
It is interesting to note that we have found $2$ examples with $e/\chi = -0.5$, thus corresponding to a square root of the cotangent bundle.

The regions of the $\PU$-character variety $\mathcal R(n_1,n_2,n_3)$ that we study are those parametrized in the coordinates $(s,t) \in \RR_{\geq 0}\times\RR_{\geq0}$, as described in Remark \ref{remark alg}. 
Once fixed the parameters $\alpha_i$'s, $\beta_i$'s, $\gamma_i^{-1}$'s we determine, when possible, $I_1,I_2,I_3$ via the procedure outlined in the Remark \ref{remark alg}: we start by fixing the isometry $I_1$ as a diagonal matrix with eigenvalues $\alpha_1,\alpha_2,\alpha_3$, then we compute the isometry $I_2$ entries as function of $(s,t)$, where its negative eigenvector is $u\coloneq (\sqrt{1+s+t},\sqrt{s},\sqrt{t})$ and the rest of the information follows by imposing that $I_2$ has eigenvalues $\beta_i$'s and $I_2 I_1$ has eigenvalues $\gamma_i$'s. By imposing that the eigenvalues for $I_2$ are $\beta_1,\beta_2,\beta_3$ we can write $I_2$ explicitly as function of its negative eigenvector $u$ associated to $\beta_1$ and a positive eigenvector $v$, associated to $\beta_2$, using the expression in Equation \eqref{matrices I1, I2}. This second eigenvector is determined by imposing that $I_2I_1$ have trace $\gamma_1+\gamma_2+\gamma_3$, which implies, by a result due to Goldman, that $I_2I_1$ has eigenvalues $\gamma_1,\gamma_2,\gamma_3$, assuming these three numbers are pairwise distinct.
For details, see {\it Regular case: $I_2$ is regular elliptic} in the Section \ref{section proof of main result}.

With that in mind, we list all examples found with $e/\chi =-1,\,-0.5,\,0$.

{\footnotesize
\begin{center}
\begin{table}[H]
    \renewcommand*{\arraystretch}{0.9}

\begin{minipage}{.51\linewidth}
  \begin{tabular}{|c|c|c|c|c|c|c|c|c|}
    \hline
    $n_1$ & $n_2$ & $n_3$ & $k_1$ & $k_2$ & $k_3$ & $s$ & $t$ & $e/\chi$ \\
    \hline
    3 & 4 & 12 & 0 & 0 & 4 & 1.0 & 0.4 & 0 \\
    4 & 3 & 12 & 0 & 0 & 4 & 0.7 & 0.5 & 0 \\
    5 & 3 & 15 & 0 & 0 & 6 & 1.2 & 1.0 & 0 \\
    5 & 3 & 15 & 1 & 0 & 3 & 1.2 & 0.4 & 0 \\
    5 & 5 & 5 & 0 & 2 & 0 & 2.4 & 0.9 & 0 \\
    6 & 4 & 12 & 0 & 0 & 6 & 3.8 & 2.7 & 0 \\
    8 & 4 & 8 & 4 & 0 & 0 & 3.5 & 0.7 & 0 \\
    9 & 3 & 9 & 0 & 0 & 4 & 3.5 & 2.6 & 0 \\
    9 & 3 & 9 & 1 & 0 & 3 & 2.9 & 1.0 & 0 \\
    9 & 3 & 9 & 3 & 0 & 1 & 2.6 & 0.4 & 0 \\
    9 & 3 & 9 & 4 & 0 & 0 & 2.4 & 0.2 & 0 \\
    12 & 3 & 4 & 4 & 0 & 0 & 2.9 & 0.3 & 0 \\
    12 & 3 & 12 & 6 & 0 & 0 & 3.4 & 0.3 & 0 \\
    12 & 4 & 3 & 4 & 0 & 0 & 3.7 & 0.6 & 0 \\
    15 & 3 & 5 & 6 & 0 & 0 & 4.4 & 0.3 & 0 \\
    3 & 3 & 9 & 0 & 0 & 1 & 0.6 & 0.4 & -0.5 \\
    9 & 3 & 3 & 1 & 0 & 0 & 2.5 & 1.2 & -0.5 \\
    \hline
  \end{tabular}
\end{minipage}%
\begin{minipage}{.51\linewidth}
  \begin{tabular}{|c|c|c|c|c|c|c|c|c|}
    \hline
     $n_1$ & $n_2$ & $n_3$ & $k_1$ & $k_2$ & $k_3$ & $s$ & $t$ & $e/\chi$ \\
    \hline
    3 & 3 & 4 & 0 & 0 & 0 & 0.3 & 0.2 & -1 \\
    3 & 3 & 5 & 0 & 0 & 0 & 0.5 & 0.3 & -1 \\
    3 & 3 & 6 & 0 & 0 & 0 & 0.6 & 0.4 & -1 \\
    3 & 3 & 7 & 0 & 0 & 0 & 0.7 & 0.5 & -1 \\
    3 & 4 & 3 & 0 & 0 & 0 & 0.4 & 0.4 & -1 \\
    4 & 3 & 3 & 0 & 0 & 0 & 0.3 & 0.4 & -1 \\
    4 & 3 & 4 & 0 & 0 & 0 & 0.7 & 0.7 & -1 \\
    4 & 3 & 5 & 0 & 0 & 0 & 1.1 & 1.1 & -1 \\
    4 & 3 & 6 & 0 & 0 & 0 & 1.4 & 1.4 & -1 \\
    5 & 3 & 3 & 0 & 0 & 0 & 0.7 & 0.9 & -1 \\
    5 & 3 & 4 & 0 & 0 & 0 & 1.5 & 1.6 & -1 \\
    5 & 3 & 5 & 0 & 0 & 0 & 2.3 & 2.3 & -1 \\
    6 & 3 & 3 & 0 & 0 & 0 & 1.4 & 1.6 & -1 \\
    6 & 3 & 4 & 0 & 0 & 0 & 2.8 & 2.8 & -1 \\
    7 & 3 & 3 & 0 & 0 & 0 & 2.2 & 2.4 & -1 \\
    8 & 3 & 3 & 0 & 0 & 0 & 3.4 & 3.6 & -1 \\
      &   &   &   &   &   &     &     &      \\
    \hline
  \end{tabular}
\end{minipage}
\caption{Non-rigid examples with $e/\chi$ equal to $0,-0.5,-1$.}
\label{table e/x =0, -1, 0.5 non-rigid}
\end{table}
\end{center}
}%
\vspace*{-1cm}

The methodology followed to find examples is the following: for each $(n_1,n_2,n_3,k_1,k_2,k_3,d)$, with $3\leq n_i\leq 15$, $0\leq k_i \leq n_i-3$ and $d=1$, we constructed the representation following the formulas in Section \ref{section proof of main result} and then tested the conditions for discreteness, described in Section \ref{section discreteness} and more in-depth in Section \ref{quadrangle revisited} for $s,t  \in  (0.1 \ZZ) \cap [0,5]$. We found $39431$ examples. Among them, we observed $853$ distinct $(n_1,n_2,n_3,k_1,k_2,k_3,d)$ with different Euler numbers $e/\chi$ (the relative Euler number does not depend on the parameters $s,t$), thus corresponding to distinct topological objects.

\begin{rmk}
    Explicit computations can be found in the Section \ref{section:Explicit example with trivial Euler number} for the example $$(n_1,n_2,n_3,k_1,k_2,k_3,s,t)=(5, 5, 5, 0, 2, 0, 2.4, 0.9),$$ from the Table \ref{table e/x =0, -1, 0.5 non-rigid} above. Such representation is discrete forming a disc orbibundle over $\SP^2(5,5,5)$. Additionally, we compute its  Euler number to be $e = 0$ and the Toledo invariant to be $\tau = \frac23 \chi$.
\end{rmk}

In Figure \ref{fig: R334}, we illustrate a prototypical connected component of $\mathcal R(3,3,4)$ in the coordinates $(s,t)$. The precise parameters for this component are $n_1=n_2=3,n_3=4,k_1=k_2=k_3=0$, as shown in Table~\ref{table e/x =0, -1, 0.5 non-rigid}.

\begin{figure}[H]\label{figure1}
	\centering
	\includegraphics[scale = .45]{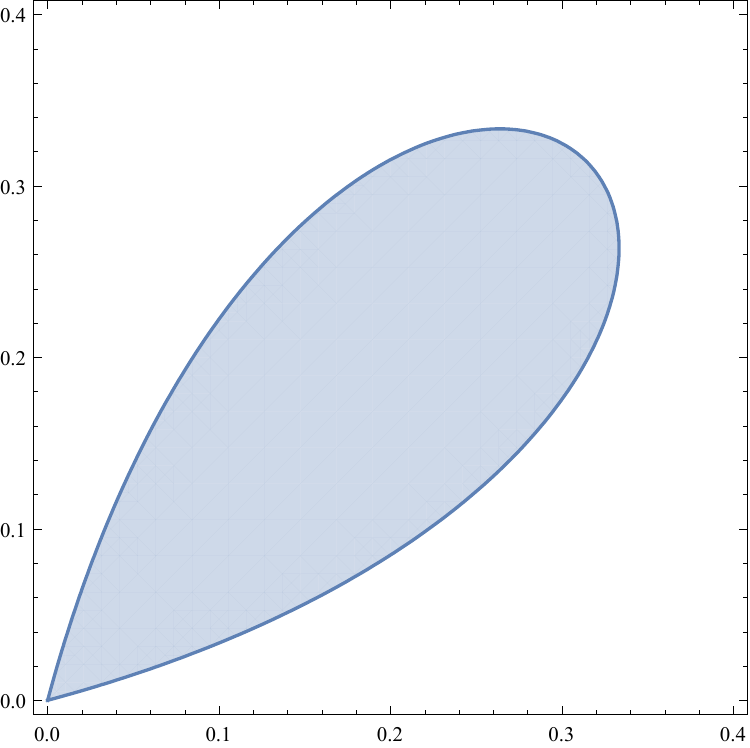}
	\caption{$\mathcal R(3,3,4)$ with regular $I_1,I_2,I_3$}
    \label{fig: R334}
\end{figure}

\noindent
The shaded region in Figure \ref{fig: R334 discbundles} corresponds to the representations modulo $\PU$ where the inequalities outlined in Section \ref{quadrangle revisited} hold. Each point in the shaded region defines a complex hyperbolic disc orbibundle, and two distinct points correspond to a pair of diffeomorphic but non-isometric disc orbibundles.

\begin{figure}[H]
	\centering
	\includegraphics[scale = .45]{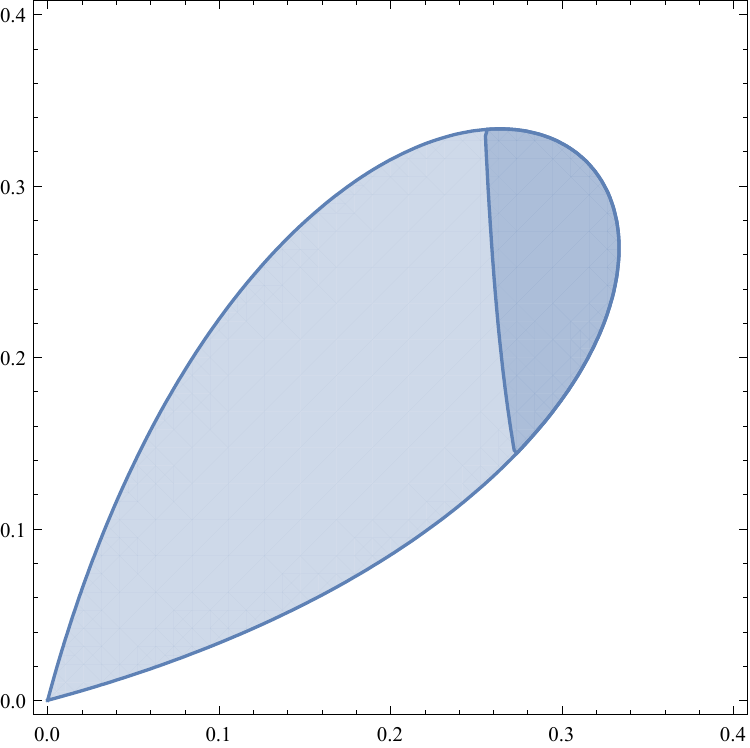}
	\caption{Some disc orbibundles over $\SP^2(3,3,4)$}
     \label{fig: R334 discbundles}
\end{figure}

In principle, it could be that all generic representations $G(n_1,n_2,n_3) \to \PU$ with regular isometries $I_1,I_2,I_3$ were discrete; for a point in the shaded region, discreteness is guaranteed because a particular fundamental domain is shown to exist (see Section \ref{section discreteness}), but nothing is preventing the points outside such region to also correspond to discrete representations. However, this is not the case. Indeed, consider the function
$G(s,t) = f(\tr[I_1,I_2])$, where
\[f(z) =|z|^4 - 8\real(z^3) + 18 |z|^2 -27\]
is Goldman's discriminant (see \cite[p.~204, Theorem 6.2.4]{goldmanbook}). The region in $\mathcal R(3,3,4)$ described by $G(s,t)<0$ is the shaded area in the Figure \ref{334goldman}. Note that $G(0.01,0.01) \simeq -12.282$ is negative and $G(0.05,0.05) = 26.392$ is positive.

\begin{figure}[H]
	\centering
	\includegraphics[scale = .45]{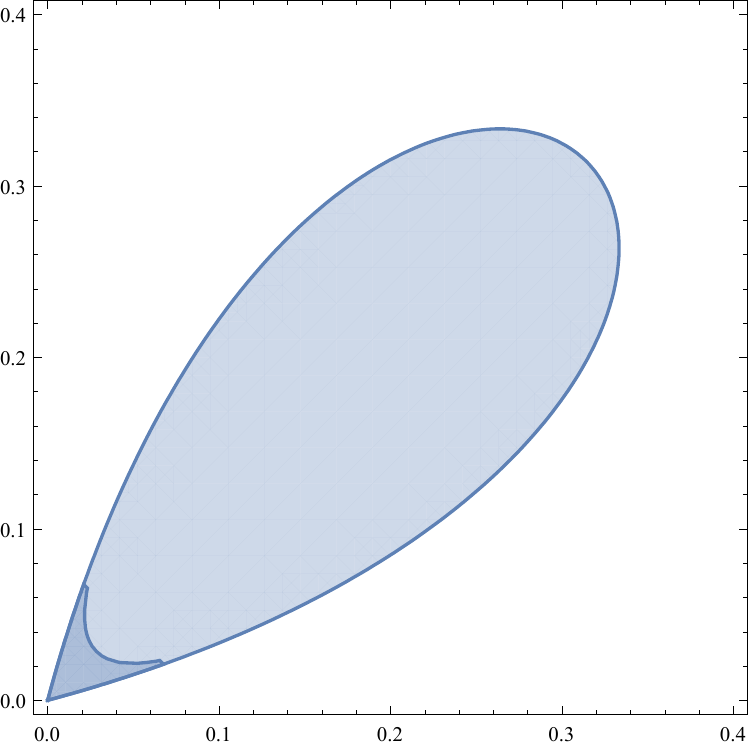}
	\caption{$\mathcal R(3,3,4)$ and $\G(s,t) < 0$}
	\label{334goldman}
\end{figure}

\noindent
Not all examples with $G(s,t)<0$ can be discrete because, by \cite[Theorem 6.2.4]{goldmanbook}, $G(s,t)<0$ means that $[I_1,I_2]$ is regular elliptic. Since in $\mathcal R(3,3,4)$ there are also points where $G(s,t)>0$, we obtain uncountable many distinct negative values for $G(s,t)$. If all representations in $\mathcal R(3,3,4)$ were discrete, the elliptic isometries in the group $\langle I_1,I_2,I_3 \rangle$ would be of finite order, since discrete groups of isometries have finite stabilizers, leading to a countable amount of possibles values for $G(s,t)<0$. By continuity, there is a non-discrete representation with $s=t$ and $t \in (0.01,0.05)$.

\medskip

Now we discuss the representations where $I_1,I_3$ are regular elliptic and $I_2$ is special elliptic with $3\leq n_1,n_3 \leq 30$, $0\leq k_1\leq n_1-3$, $0\leq k_3\leq n_3-3$, $2\leq n_2 \leq 30$, $0\leq k_2\leq n_2-1$ and $d=0,1,2$. When $I_2$ is a rotation about a point ($\beta_2=\beta_3$), we found $34240$ examples in $13345$ different character varieties, with $e/\chi\in[-1,0.5]$. See Figure \ref{euler frequency rigid}. The values $e/\chi =-1,\,-0.5,\,0,\,0.5$ occur here. On the other hand, when $I_2$ is a rotation around a complex geodesic ($\beta_1=\beta_3$), we found $67769$ examples in $17030$ different character varieties, with $e/\chi\in(0,0.5]$, including the right extreme (thus neither $e/\chi=-1$ nor $e/\chi=0$ were observed here, the case $e/\chi = 0.5$ occurs $50$ times, corresponding to square roots of tangent bundles). As in all the examples we found, the identity $3\tau = 2(e+ \chi)$ is satisfied. Note that $e/\chi =0.5$ is the maximal relative Euler number allowed by this formula (because $|\tau/\chi| \leq 1$ by Toledo Rigidity) and that this particular relative Euler number only happens for $\tau/\chi = 1$ (thus, the corresponding representation is $\CC$-Fuchsian).

\begin{figure}[H]
	\centering
	\begin{minipage}{.5\textwidth}
		\centering
		\includegraphics[scale =0.9]{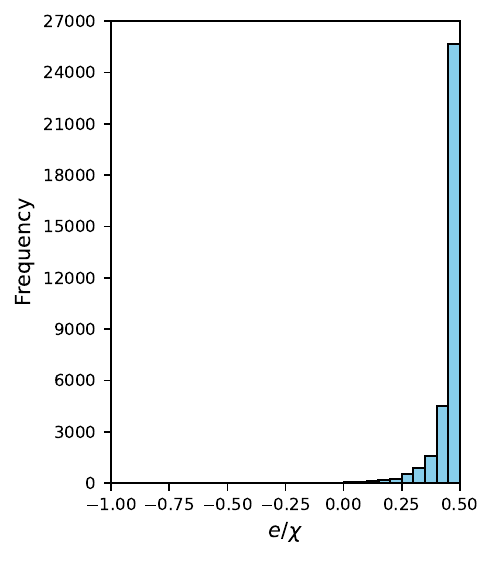}
        \vspace*{-.4cm}
		\caption*{\hspace{1cm}(a)}
	\end{minipage}%
	\begin{minipage}{.5\textwidth}
		\centering

		\includegraphics[scale =0.9]{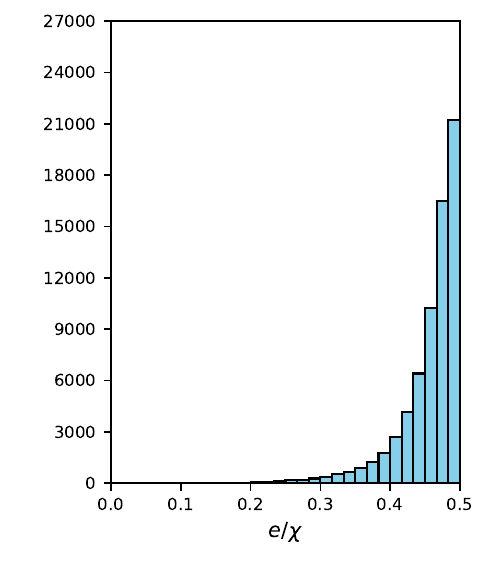}
        \vspace*{-.4cm}
		\caption*{\hspace{1cm}(b)}
	\end{minipage}
	\caption{\textbf{(a)} Histogram for the variable $e/\chi$ when $I_2$ is a rotation about a point. A total of $34240$ rigid disc orbibundles with different relative Euler numbers $e/\chi$ are found in $13345$ character varieties. \textbf{(b)} Histogram for the variable $e/\chi$ when $I_2$ is a rotation about a complex geodesic. A total of $67769$ rigid disc orbibundles with different relative Euler numbers $e/\chi$ are found in $17030$ character varieties.}
	\label{euler frequency rigid}
\end{figure}

We list all the rigid examples with $e/\chi=-1,\,-0.5,\,0,\,0.5$ on the Tables \ref{table e/x =0, -1, 0.5 non-rigid about a point} and \ref{table e/x =0, -1, 0.5 non-rigid about a complex geodesic}. On these tables, there is no parameter $s,t$ displayed because they are determined by the eigenvalues when $I_2$ is a rotation about a point ($\beta_2=\beta_3$) and $I_2$ does not depend them when $I_2$ is a rotation about a geodesic ($\beta_1=\beta_3$), as can be seen from the formula for the matrix $I_2$ at the Equation \eqref{matrices I1, I2}.

{\footnotesize
\begin{center}
\begin{table}[H]
 \setlength{\tabcolsep}{2pt}
    \renewcommand*{\arraystretch}{0.9}

\begin{minipage}{.246\linewidth}
  \resizebox{.95\textwidth}{!}{
  \begin{tabular}{|c|c|c|c|c|c|c|}
    \hline
    $n_1$ & $n_2$ & $n_3$ & $k_1$ & $k_2$ & $k_3$ & $e/\chi$ \\
    
    \hline
16 & 16 & 8 & 0 & 14 & 2 & 0.5 \\
18 & 6 & 5 & 0 & 4 & 1 & 0.5 \\
18 & 12 & 12 & 0 & 10 & 4 & 0.5 \\
18 & 15 & 30 & 0 & 13 & 12 & 0.5 \\
19 & 19 & 19 & 0 & 17 & 7 & 0.5 \\
19 & 19 & 19 & 16 & 17 & 10 & 0.5 \\
20 & 15 & 12 & 0 & 13 & 4 & 0.5 \\
20 & 20 & 30 & 0 & 18 & 12 & 0.5 \\
23 & 23 & 23 & 0 & 21 & 9 & 0.5 \\
24 & 4 & 24 & 0 & 2 & 12 & 0.5 \\
24 & 8 & 3 & 0 & 6 & 0 & 0.5 \\
24 & 8 & 5 & 0 & 6 & 1 & 0.5 \\
24 & 8 & 7 & 0 & 6 & 2 & 0.5 \\
24 & 8 & 9 & 0 & 6 & 3 & 0.5 \\
24 & 8 & 11 & 0 & 6 & 4 & 0.5 \\
24 & 8 & 13 & 0 & 6 & 5 & 0.5 \\
24 & 8 & 15 & 0 & 6 & 6 & 0.5 \\
24 & 8 & 17 & 0 & 6 & 7 & 0.5 \\
24 & 8 & 19 & 0 & 6 & 8 & 0.5 \\
24 & 8 & 21 & 0 & 6 & 9 & 0.5 \\
24 & 8 & 23 & 0 & 6 & 10 & 0.5 \\
24 & 8 & 25 & 0 & 6 & 11 & 0.5 \\
24 & 8 & 27 & 0 & 6 & 12 & 0.5 \\
24 & 8 & 29 & 0 & 6 & 13 & 0.5 \\
24 & 12 & 24 & 0 & 10 & 10 & 0.5 \\
24 & 24 & 12 & 0 & 22 & 4 & 0.5 \\
25 & 5 & 5 & 1 & 3 & 1 & 0.5 \\
    \hline
  \end{tabular}}
  
\end{minipage}
\begin{minipage}{.246\linewidth}
  \resizebox{.95\textwidth}{!}{
  \begin{tabular}{|c|c|c|c|c|c|c|}
    \hline
     $n_1$ & $n_2$ & $n_3$ & $k_1$ & $k_2$ & $k_3$ & $e/\chi$ \\
    \hline
25 & 25 & 25 & 20 & 23 & 15 & 0.5 \\
25 & 5 & 25 & 22 & 3 & 15 & 0.5 \\
25 & 25 & 25 & 22 & 23 & 13 & 0.5 \\
27 & 3 & 27 & 0 & 1 & 15 & 0.5 \\
27 & 9 & 3 & 0 & 7 & 0 & 0.5 \\
27 & 9 & 5 & 0 & 7 & 1 & 0.5 \\
27 & 9 & 7 & 0 & 7 & 2 & 0.5 \\
27 & 9 & 9 & 0 & 7 & 3 & 0.5 \\
27 & 9 & 11 & 0 & 7 & 4 & 0.5 \\
27 & 9 & 13 & 0 & 7 & 5 & 0.5 \\
27 & 9 & 15 & 0 & 7 & 6 & 0.5 \\
27 & 9 & 17 & 0 & 7 & 7 & 0.5 \\
27 & 9 & 19 & 0 & 7 & 8 & 0.5 \\
27 & 9 & 21 & 0 & 7 & 9 & 0.5 \\
27 & 9 & 23 & 0 & 7 & 10 & 0.5 \\
27 & 9 & 25 & 0 & 7 & 11 & 0.5 \\
27 & 9 & 27 & 0 & 7 & 12 & 0.5 \\
27 & 9 & 29 & 0 & 7 & 13 & 0.5 \\
27 & 3 & 27 & 24 & 1 & 18 & 0.5 \\
27 & 6 & 18 & 24 & 4 & 10 & 0.5 \\
27 & 9 & 9 & 24 & 7 & 4 & 0.5 \\
27 & 9 & 27 & 24 & 7 & 15 & 0.5 \\
27 & 18 & 30 & 24 & 16 & 16 & 0.5 \\
30 & 4 & 20 & 0 & 2 & 10 & 0.5 \\
30 & 10 & 3 & 0 & 8 & 0 & 0.5 \\
30 & 10 & 5 & 0 & 8 & 1 & 0.5 \\
30 & 10 & 7 & 0 & 8 & 2 & 0.5 \\
    \hline
  \end{tabular}}
\end{minipage}
\begin{minipage}{.246\linewidth}
  \resizebox{.95\textwidth}{!}{
  \begin{tabular}{|c|c|c|c|c|c|c|}
    \hline
     $n_1$ & $n_2$ & $n_3$ & $k_1$ & $k_2$ & $k_3$ & $e/\chi$ \\
    \hline
30 & 10 & 9 & 0 & 8 & 3 & 0.5 \\
30 & 10 & 13 & 0 & 8 & 5 & 0.5 \\
30 & 10 & 15 & 0 & 8 & 6 & 0.5 \\
30 & 10 & 17 & 0 & 8 & 7 & 0.5 \\
30 & 10 & 19 & 0 & 8 & 8 & 0.5 \\
30 & 10 & 21 & 0 & 8 & 9 & 0.5 \\
30 & 10 & 23 & 0 & 8 & 10 & 0.5 \\
30 & 10 & 27 & 0 & 8 & 12 & 0.5 \\
30 & 10 & 29 & 0 & 8 & 13 & 0.5 \\
5 & 2 & 10 & 0 & 0 & 2 & 0 \\
5 & 2 & 10 & 1 & 0 & 0 & 0 \\
7 & 2 & 14 & 1 & 0 & 2 & 0 \\
7 & 2 & 14 & 2 & 0 & 0 & 0 \\
8 & 2 & 8 & 0 & 0 & 2 & 0 \\
8 & 2 & 8 & 2 & 0 & 0 & 0 \\
10 & 2 & 5 & 0 & 0 & 1 & 0 \\
10 & 2 & 5 & 2 & 0 & 0 & 0 \\
12 & 2 & 12 & 4 & 0 & 0 & 0 \\
14 & 2 & 7 & 2 & 0 & 1 & 0 \\
14 & 2 & 7 & 4 & 0 & 0 & 0 \\
18 & 2 & 9 & 4 & 0 & 1 & 0 \\
18 & 2 & 9 & 6 & 0 & 0 & 0 \\
22 & 2 & 11 & 8 & 0 & 0 & 0 \\
9 & 2 & 6 & 1 & 0 & 0 & -0.5 \\
3 & 2 & 7 & 0 & 0 & 0 & -1 \\
3 & 2 & 8 & 0 & 0 & 0 & -1 \\
3 & 2 & 9 & 0 & 0 & 0 & -1 \\
    \hline
  \end{tabular}}
\end{minipage}
\begin{minipage}{.246\linewidth}
  \resizebox{.95\textwidth}{!}{
  \begin{tabular}{|c|c|c|c|c|c|c|}
    \hline
     $n_1$ & $n_2$ & $n_3$ & $k_1$ & $k_2$ & $k_3$ & $e/\chi$ \\
    \hline
3 & 2 & 10 & 0 & 0 & 0 & -1 \\
3 & 2 & 11 & 0 & 0 & 0 & -1 \\
3 & 2 & 12 & 0 & 0 & 0 & -1 \\
3 & 2 & 13 & 0 & 0 & 0 & -1 \\
3 & 2 & 14 & 0 & 0 & 0 & -1 \\
3 & 2 & 15 & 0 & 0 & 0 & -1 \\
3 & 2 & 16 & 0 & 0 & 0 & -1 \\
4 & 2 & 5 & 0 & 0 & 0 & -1 \\
4 & 2 & 6 & 0 & 0 & 0 & -1 \\
4 & 2 & 7 & 0 & 0 & 0 & -1 \\
4 & 2 & 8 & 0 & 0 & 0 & -1 \\
4 & 2 & 9 & 0 & 0 & 0 & -1 \\
5 & 2 & 4 & 0 & 0 & 0 & -1 \\
5 & 2 & 5 & 0 & 0 & 0 & -1 \\
5 & 2 & 6 & 0 & 0 & 0 & -1 \\
5 & 2 & 7 & 0 & 0 & 0 & -1 \\
6 & 2 & 4 & 0 & 0 & 0 & -1 \\
6 & 2 & 5 & 0 & 0 & 0 & -1 \\
7 & 2 & 3 & 0 & 0 & 0 & -1 \\
7 & 2 & 4 & 0 & 0 & 0 & -1 \\
7 & 2 & 5 & 0 & 0 & 0 & -1 \\
8 & 2 & 3 & 0 & 0 & 0 & -1 \\
8 & 2 & 4 & 0 & 0 & 0 & -1 \\
9 & 2 & 3 & 0 & 0 & 0 & -1 \\
  &   &   &   &   &   &  \\
  &   &   &   &   &   &  \\
  &   &   &   &   &   &  \\
    \hline
  \end{tabular}}
\end{minipage}
\caption{Rigid examples with $e/\chi$ equal to $0.5, 0,-0.5,-1$ when $I_2$ is a rotation about a point. There are 27 instances with $e/\chi = -1$ , $1$ with $e/\chi =- 0.5$, $14$ with $e/\chi=0$, and $63$ with $e/\chi = 0.5$.
}
\label{table e/x =0, -1, 0.5 non-rigid about a point}
\end{table}
\end{center}
}%

{\footnotesize
\begin{center}
\begin{table}[H]
 \setlength{\tabcolsep}{2pt}
     \renewcommand*{\arraystretch}{0.9}

\begin{minipage}{.246\linewidth}
  \resizebox{.95\textwidth}{!}{
  \begin{tabular}{|c|c|c|c|c|c|c|}
    \hline
    $n_1$ & $n_2$ & $n_3$ & $k_1$ & $k_2$ & $k_3$ & $e/\chi$ \\
    \hline
9 & 27 & 27 & 0 & 26 & 7 & 0.5 \\
9 & 27 & 27 & 6 & 26 & 16 & 0.5 \\
9 & 30 & 30 & 6 & 29 & 18 & 0.5 \\
11 & 22 & 22 & 8 & 21 & 12 & 0.5 \\
12 & 28 & 7 & 0 & 27 & 1 & 0.5 \\
14 & 14 & 7 & 0 & 13 & 1 & 0.5 \\
15 & 30 & 30 & 0 & 29 & 10 & 0.5 \\
15 & 25 & 25 & 12 & 24 & 13 & 0.5 \\
15 & 30 & 30 & 12 & 29 & 16 & 0.5 \\
18 & 12 & 4 & 0 & 11 & 0 & 0.5 \\
18 & 18 & 27 & 0 & 17 & 9 & 0.5 \\
20 & 10 & 4 & 0 & 9 & 0 & 0.5 \\
20 & 12 & 30 & 0 & 11 & 10 & 0.5 \\
    \hline
  \end{tabular}}
\end{minipage}
\begin{minipage}{.246\linewidth}
  \resizebox{.95\textwidth}{!}{
  \begin{tabular}{|c|c|c|c|c|c|c|}
    \hline
     $n_1$ & $n_2$ & $n_3$ & $k_1$ & $k_2$ & $k_3$ & $e/\chi$ \\
    \hline
21 & 30 & 30 & 16 & 29 & 18 & 0.5 \\
21 & 21 & 21 & 18 & 20 & 10 & 0.5 \\
21 & 28 & 28 & 18 & 27 & 14 & 0.5 \\
22 & 22 & 11 & 0 & 21 & 3 & 0.5 \\
24 & 8 & 4 & 0 & 7 & 0 & 0.5 \\
24 & 8 & 20 & 0 & 7 & 6 & 0.5 \\
24 & 24 & 18 & 0 & 23 & 6 & 0.5 \\
25 & 25 & 25 & 1 & 24 & 8 & 0.5 \\
25 & 30 & 30 & 1 & 29 & 10 & 0.5 \\
25 & 25 & 25 & 18 & 24 & 16 & 0.5 \\
25 & 25 & 25 & 21 & 24 & 13 & 0.5 \\
25 & 30 & 30 & 21 & 29 & 16 & 0.5 \\
25 & 25 & 25 & 22 & 24 & 12 & 0.5 \\
    \hline
  \end{tabular}}
\end{minipage}
\begin{minipage}{.246\linewidth}
  \resizebox{.95\textwidth}{!}{
  \begin{tabular}{|c|c|c|c|c|c|c|}
    \hline
     $n_1$ & $n_2$ & $n_3$ & $k_1$ & $k_2$ & $k_3$ & $e/\chi$ \\
    \hline
26 & 26 & 13 & 22 & 25 & 6 & 0.5 \\
27 & 18 & 18 & 0 & 17 & 6 & 0.5 \\
27 & 27 & 27 & 0 & 26 & 10 & 0.5 \\
27 & 27 & 27 & 1 & 26 & 9 & 0.5 \\
27 & 27 & 9 & 4 & 26 & 1 & 0.5 \\
27 & 27 & 27 & 18 & 26 & 19 & 0.5 \\
27 & 27 & 27 & 19 & 26 & 18 & 0.5 \\
27 & 27 & 27 & 21 & 26 & 16 & 0.5 \\
27 & 27 & 9 & 22 & 26 & 4 & 0.5 \\
27 & 27 & 27 & 22 & 26 & 15 & 0.5 \\
27 & 9 & 27 & 24 & 8 & 12 & 0.5 \\
27 & 27 & 27 & 24 & 26 & 13 & 0.5 \\
28 & 2 & 28 & 0 & 1 & 4 & 0.5 \\
    \hline
  \end{tabular}}
\end{minipage}
\begin{minipage}{.246\linewidth}
  \resizebox{.95\textwidth}{!}{
  \begin{tabular}{|c|c|c|c|c|c|c|}
    \hline
     $n_1$ & $n_2$ & $n_3$ & $k_1$ & $k_2$ & $k_3$ & $e/\chi$ \\
    \hline
28 & 12 & 21 & 0 & 11 & 7 & 0.5 \\
28 & 14 & 28 & 0 & 13 & 10 & 0.5 \\
29 & 29 & 29 & 1 & 28 & 10 & 0.5 \\
29 & 29 & 29 & 21 & 28 & 19 & 0.5 \\
29 & 29 & 29 & 22 & 28 & 18 & 0.5 \\
29 & 29 & 29 & 23 & 28 & 17 & 0.5 \\
29 & 29 & 29 & 24 & 28 & 16 & 0.5 \\
29 & 29 & 29 & 26 & 28 & 14 & 0.5 \\
30 & 15 & 18 & 0 & 14 & 6 & 0.5 \\
30 & 30 & 5 & 22 & 29 & 2 & 0.5 \\
30 & 30 & 25 & 22 & 29 & 16 & 0.5 \\
  &   &   &   &   &   &  \\
  &   &   &   &   &   &  \\
    \hline
  \end{tabular}}
\end{minipage}
\caption{We have found $50$ rigid examples with $e/\chi=0.5$ when $I_2$ is a rotation about a complex geodesic. No example with $e/\chi < 0.16$ was found where $I_2$ is a rotation about a complex geodesic.
}
\label{table e/x =0, -1, 0.5 non-rigid about a complex geodesic}
\end{table}
\end{center}
}%

\begin{rmk}When we drop the transversalities conditions $(\mathrm{Q2})$ and $(\mathrm{Q3})$ stated in the Section \ref{subsection quadrangle conditions} for the quadrangles, the formula defining $e$ still makes sense, since it only depends on information coming from the eigenvalues of $I_1,I_2,I_3$ and on the integer $f$ defined in Section \ref{subsection holonomy of the quadrangle}. Curiously, the formula $3\tau = 2(e+\chi)$ still holds in the majority of the cases where the transversalities conditions were dropped. In the few cases where it fails, $3/2\tau - \chi$ differs from $e$ by an integer. This suggests that the formula for $f$ needs a correction with respect to the topology of the ``quadrangle'' corresponding to such degenerate cases. We believe that this corrected Euler number $e_{\text{cor}}\coloneq 3/2\tau - \chi$ have a geometrical meaning that is related to the object obtained by gluing the sides of the ``quadrangle'' respectively to the relations defined by $I_1,I_2,I_3$. Thus, in some sense, for all points in the character variety, it seems that there exists a geometric object that behaves as a bundle over $\SP^2(n_1,n_2,n_3)$ with Euler number $e_{\text{cor}}$.
\end{rmk}

\newpage
\section{Orbifold bundles and Euler number} \label{section: Orbifold bundles and Euler number}

\subsection{Deformation lemma}
Given a complex geodesic $C$ with a chosen $c\in C$, we identify $C$ with the unit open disc in $\CC$ as follows. Let $p$ be the point orthogonal to $c$ in the complex projective line extending $C$. Take representatives such that $-\langle c,c\rangle=\langle p,p\rangle=1$. Then every point in $C$ has the form $c+\gamma p$, $|\gamma|\leq1$. For obvious reasons, we call $c$ the center of $C$.

Consider the action of $\SP^1$ on the circle $\partial_\infty C$ by rotations centered at $c$. More precisely, given a unit complex number $\theta\in\SP^1$, we define
\[\gamma[c+\theta p] = [c+\gamma\theta p].\]
In particular, we have an $\SP^1$-action on the vertices $C_i$ of the quadrangle $\mathcal Q$, where each $C_i$ has center $c_i$.

\begin{lemma}\label{deformationlemma} Consider an orientation on $V$ and let $q_i$ be a vector such that $c_i,p_i,q_i$ is a positively oriented orthonormal basis of $V$. There is a family of curves $c_i(\delta),p_i(\delta),q_i(\delta)$, with $\delta\in [0,1]$, such that
	\begin{itemize}
	\item $c_i(0) = c_i$, $p_i(0)= p_i$ and $q_i(0) = q_i$;
	\item for each $\delta$ the vectors $c_i(\delta),p_i(\delta),q_i(\delta)$ form a positively oriented orthonormal basis of $V$;
	\item for all $i,j$, with $i \neq j$, we have $\ta(p_i(\delta),p_j(\delta))>1$;
	\item If $C_i(\delta)\coloneq \PP(p_i(\delta)^\perp)$ and we consider the triangles of bisectors $\triangle_1(\delta)$, with vertices $C_1(\delta)$, $C_2(\delta)$ and $C_4(\delta)$, and $\triangle_2(\delta)$, with vertices $C_2(\delta)$, $C_3(\delta)$ and $C_4(\delta)$, then for each $\delta$ the triangles $\triangle_1(\delta)$ and $\triangle_2(\delta)$ are transversal and counterclockwise oriented.
	\item $q_1(1) = q_2(1) = q_3(1) = q_4(1)$.
	\end{itemize}

The last item means that if $\mathcal Q(\delta)$ is the quadrangle with vertices $C_i(\delta)$, then the bisectors forming the boundary of $\mathcal Q(1)$ all have the same focus (recall that the focus of a bisector is the polar point of its complex spine, see Section \ref{section preliminaries}).
\end{lemma}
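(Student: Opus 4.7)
The plan is to construct the deformation explicitly. The crucial geometric observation is that the target condition $q_1(1)=q_2(1)=q_3(1)=q_4(1)=:q$ is equivalent to all four 2-dimensional complex subspaces $q_i(1)^\perp=\CC c_i(1)+\CC p_i(1)$ coinciding with $q^\perp$; equivalently, the extended projective lines $\PP_\CC(p_i(1)^\perp)$ all pass through the common positive point $[q]\in\EV$, so the $C_i(1)$ form a pencil through $[q]$ --- this is exactly what it means for the bisectors to share a common focus.

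The first step is to exhibit a suitable target configuration at $\delta=1$. Fix a positive point $q\in V$ (for concreteness, $q:=q_1(0)$). Because $q^\perp$ is a Hermitian plane of signature $-+$, the space $\PP_\CC(q^\perp)\cap\HH_\CC^2$ is a Poincar\'e disc, and choosing target orthonormal triples $\bigl(c_i(1),p_i(1),q\bigr)$ with $c_i(1),p_i(1)\in q^\perp$ amounts to placing four centers and their polar complex geodesics inside this disc. Since the conditions of Subsection \ref{subsection quadrangle conditions} are open and the corresponding configuration space is positive-dimensional, a target satisfying pairwise ultraparallelism and counterclockwise-oriented transversality of $\Delta_1(1),\Delta_2(1)$ can always be found.

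The second step is the interpolation. For each $i$, let $U_i\in\mathrm{U}(V)$ be the unique unitary carrying $(c_i(0),p_i(0),q_i(0))$ to the chosen target triple, write $U_i=\exp(A_i)$ for a skew-Hermitian $A_i$, and set $(c_i(\delta),p_i(\delta),q_i(\delta)):=\exp(\delta A_i)(c_i(0),p_i(0),q_i(0))$. Orthonormality and positive orientation are then automatic at every $\delta\in[0,1]$, so the first, second and last bullet points of the lemma hold by construction.

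The main obstacle --- and the technical heart of the proof --- is verifying that the open conditions $\ta(p_i(\delta),p_j(\delta))>1$ and counterclockwise-oriented transversality of $\Delta_1(\delta),\Delta_2(\delta)$ persist throughout $\delta\in[0,1]$. Since these conditions hold at both endpoints and depend continuously on the frames, the strategy is to partition $[0,1]$ into sufficiently small subintervals and choose, at each stage, a close-by admissible intermediate configuration (still lying on the positive-dimensional submanifold of configurations with a common focus at $\delta=1$), so that each short straight-line segment in $\mathrm{U}(V)$ remains inside the admissible open region by continuity. Concatenating these short admissible deformations yields the desired family.
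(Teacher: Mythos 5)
Your proposal has the right picture of what the common-focus condition means and a reasonable setup (move each frame by a one-parameter unitary subgroup), but the step you yourself call ``the technical heart'' is not actually carried out, and it is precisely the content of the lemma. You argue: the admissibility conditions (pairwise ultraparallelism, transversal counterclockwise orientation of $\triangle_1,\triangle_2$) are open, they hold at $\delta=0$ and at the chosen target $\delta=1$, so by subdividing $[0,1]$ and picking ``close-by admissible intermediate configurations'' one can concatenate short admissible segments. This is circular: an open set containing two points need not contain a path between them, and the intermediate configurations you invoke must already lie on a continuous admissible path from the initial quadrangle to the common-focus one --- whose existence is exactly what must be proved. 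Nothing in your argument rules out the initial configuration and the target lying in different connected components of the admissible region. The difficulty is compounded by the fact that your four frames move under four independent unitaries, so all the mutual conditions $\ta(p_i(\delta),p_j(\delta))>1$ and the transversality of both triangles have to be controlled simultaneously; continuity alone gives persistence only on a small initial subinterval, not on all of $[0,1]$.

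The paper's proof supplies the missing connectivity argument concretely. It uses that a transversal counterclockwise triangle of bisectors is determined up to isometry by the invariants $s,t,\epsilon_0$ (Lemma A.31 of \cite{discbundles}), rewrites the transversality conditions as explicit quadratic inequalities in these parameters, and exhibits an explicit path in the parameter space: first deforming $(t,\epsilon_0)$ and $(t',\epsilon_0')$ until $t=t'=s$ and $1+2\epsilon_0 s^3=3s^2$ (checking at each stage, via elementary estimates on the relevant quadratics, that the inequalities are preserved), and only then invoking the path-connectedness of the space of transversal counterclockwise triangles (Lemma 2.28 of \cite{discbundles}) to push the now-symmetric quadrangle to one whose bisectors share a focus. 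To repair your proof you would need either to reproduce an argument of this kind or to cite a connectivity statement for the full space of admissible quadrangle configurations; asserting openness at the endpoints does not suffice.
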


\textbf{Proof:} Consider $s,t,t', \epsilon_0, \epsilon_0'$ as consider in the inequalities \eqref{Q2}. It is know from \cite[Lemma A.31]{discbundles} that the parameters $s,t, \epsilon_0$, with $t,s>1$ and $0<\epsilon_0<1$, determine up to isometry the transversal counterclockwise oriented triangle of bisectors $\triangle_1\coloneq \triangle[C_1,C_2,C_4]$.

Suppose $t>s$. We will show that choosing a convenient $\epsilon_0>0$ we can reduce $t$ until $t=s$. The inequalities which determine that $\triangle_1$ is transversal are the
\[\epsilon_0^2 s^2 + 2t^2 < 1+2\epsilon_0 st^2 \leq 2t^2 + s^2.\]

Consider the quadratic polynomial $f(x)=x^2 s^2 - 2x st^2 + 2t^2 - 1$. The roots of $f(x)=0$ are
\[x = \frac{t^2 \pm (t^2-1)}{s}\]
and, therefore, $f(x)<0$ when $1/s<x<(2t^2 -1)/s$. Notice $x_0 =(2s^2 -1)/s$ is between $1/s$ and $(2t^2 -1)/s$. We can reduce $\epsilon_0$ until $1/s<\epsilon_0 < x_0$. Now, since the inequality $\epsilon_0^2 s^2 + 2t^2 < 1+2\epsilon_0 st^2$ is equivalent
\[t^2 >\frac{s\epsilon_0+1}{2},\]
and, by our choice of $\epsilon_0$,  \[s^2 >\frac{s\epsilon_0+1}{2},\]
we can reduce $t$ until $t=s$.

Note that the inequality $1+2\epsilon_0 st^2 \leq s^2 + 2t^2$ is kept during the above procedure.

Applying the same reasoning to $t'$ we may suppose $1<t,t'\leq s$.

Now, we will show that we can deform $t$ and $\epsilon_0$ until $t=s$ and $1+2\epsilon_0 s^3 =3s^2$ always keeping $1<t\leq s$,  $0<\epsilon_0<1$ and $\epsilon_0^2 t^2+s^2+t^2 < 1+2\epsilon_0 st^2$.

Indeed, if $t<s$, then increase $t$ until one of the two following possibilities happens:
\[t=s \quad \textrm{or} \quad 1+2\epsilon_0st^2 = 2t^2+s^2.\]

If $t=s$, then we have $1+2\epsilon_0s^3 \leq 3s^2$, which is equivalent to
\[\epsilon_0 \leq \frac{3s^2-1}{2s^3}.\]

Now, the function $g(x): = \frac{3x^2-1}{2x^3}$ is strictly decreasing for $x>1$ and, therefore, $g(x) < g(1)=1$ for $x>1$. Therefore, we can increase $\epsilon_0$ until
\[\epsilon_0 = \frac{3s^2-1}{2s^3},\]
or equivalently $1+2\epsilon_0s^3 = 3s^2$.

If $1<t\leq s$ and $1+2\epsilon_0st^2 = 2t^2+s^2$, with $0<\epsilon_0<1$, then we have the inequality
\[ 2t^2+s^2<2st^2+1,\]
or equivalently
\[t^2 > \frac{s^2-1}{2(s-1)} = \frac{s+1}{2}.\]
Therefore, we can increase $t$ until $t=s$ and have $\epsilon_0\in (0,1)$ satisfying $1+2\epsilon_0s^3 = 3s^2$.

So, we can deform $t$ and $\epsilon_0$, always keeping $t>1$ and $0<\epsilon_0<1$ during the process, and in the end we obtain $t=s$ and $1+2\epsilon_0s^3 = 3s^2$.

By the same reasoning we can deform $t'$ and $\epsilon_0'$ such that we always have $t'>1$ and $0<\epsilon_0'<1$ during the deformation and in the end we obtain $t'=s$ and $1+2\epsilon_0's^3 = 3s^2$.

So we reduced the problem to the case where $t = t'= s$ and $\epsilon_0=\epsilon_0'$. Geometrically we deformed the quadrangle $\mathcal Q$ inside $\HH_\CC^2$ always keeping the vertices $C_2$ and $C_4$ fixed and moving $C_1$ and $C_3$ around such that the two triangles of bisectors are kept transversal and counterclockwise oriented. In the case we are now all sides of the quadrangle have the same length.

Now, the quadrangle $\mathcal Q$ depends only on the parameters $s$ and $\epsilon_0$, which means it depends only on the triangle $\triangle_1 $. Let $q$ be the focus of the bisector $\B[C_2,C_4]$. Using that the space of transversal and counterclockwise oriented triangles of bisectors is path-connected \cite[Lemma 2.28]{discbundles} we can deform $\triangle_1$ until $q_1=q_2=q_3 =q$. The same deformation will be done to the triangle
$\triangle_2 =\triangle[C_2,C_3,C_4]$ simultaneously using the same parameters of $\triangle_1$. Therefore, in the end of the deformation we have the desired quadrangle.
\qed
\vspace{0.5cm}

Now we apply lemma \ref{deformationlemma} to the quadrangle $\mathcal Q$. Deform the vertices $C_1,C_2,C_3,C_4$ until the focuses of the four bisectors coincide in one point $q\in\mathrm E(V)$. Let $C_1',C_2',C_3',C_4'$ stand for the vertices at the end of the deformation. We can assume that the deformation is such that the centers $c_1,c_2,c_3,c_4$ belong to $\PP(q^\perp)\cap\HH_\CC^2$ at the end and are the vertices $c_1',c_2',c_3',c_4'$ of a convex quadrilateral $P$. Also, we can suppose that the angles at $c_1',c_2',c_3',c_4'$ are respectively $2\pi/n_1, \pi/n_2, 2\pi/n_3, \pi/n_2$, that is, this quadrilateral constitutes a fundamental polygon for the turnover group $\langle R_1,R_2,R_3\rangle$ action on the hyperbolic plane $\PP(q^\perp)\cap\HH_\CC^2$; here, $R_1,R_2,R_3$ are rotations in $\PP(q^\perp)\cap\HH_\CC^2$ with respective centers $c_1',c_2',c_3'$ and angles $-2\pi/n_1,-2\pi/n_2,-2\pi/n_3$ and satisfying the relation $R_3R_2R_1=1$.

We have a polyhedron $Q'$ bounded in $\HH_\CC^2$ by the quadrangle \[\mathcal Q '\coloneq \B[C_1',C_2'] \cup \B[C_2',C_3'] \cup \B[C_3',C_4'] \cup \B[C_4',C_1'].\]
The deformation gives rise to a diffeomorphism \[F:Q'\to Q\] such that the restriction $F|_{\mathcal Q'}:\mathcal Q'\to\mathcal Q$ maps slices to slices isometrically. Furthermore, we can assume that the geodesic curves $G[c_i',c_{i+1}']$ are mapped by this diffeomorphism to curves $\mu_i$ with end points $c_i$ and $c_{i+1}$ such that $I_1\mu_4=\mu_1$ and $I_3\mu_2=\mu_3$. The curve $\mu_1 \cup \mu_2 \cup \mu_3 \cup \mu_4$ intersects each slice of $\mathcal Q$ in one point, which we will take as a center. Given these centers, we introduce an $\SP^1$-action on each slice of $\mathcal Q$ such that the map $F$ restricted to $\mathcal Q'$ is $\SP^1$-equivariant.

Note that there are two ways of mapping $B[C_1',C_2']$ to $B[C_4,C_1]$. The first one is by the map
$$ [x+\gamma q] \mapsto I_1^{-1}F(x+\gamma q)
$$
and the second one is
$$ [x+\gamma q] \mapsto F(R_1^{-1}x+(\alpha_3/\alpha_1)\gamma q),
$$
and since the diffeomorphism $F$ maps slices to slices isometrically we have this two maps coincide in $C_1'$. Nevertheless, we want these two maps to be equal near $C'_1$ in order to calculate the Euler number of the disc orbibundle over $\SP^2(n_1,n_2,n_3)$ to be constructed, and the lemma bellow tell us that this is possible. The proof of this lemma is based on the idea of ``twisting the tube". More visually, we have the diffeomorphism $G:B[C_1',C_2'] \to B[C_1',C_2']$ given by $[x+\gamma q] \mapsto F^{-1}I_1F(R^{-1}x+(\alpha_3/\alpha_1)\gamma q)$ and it gives us the behavior described in the Figure \ref{twistedtube}.

\begin{figure}[H]
	\centering
	\includegraphics[scale = .7]{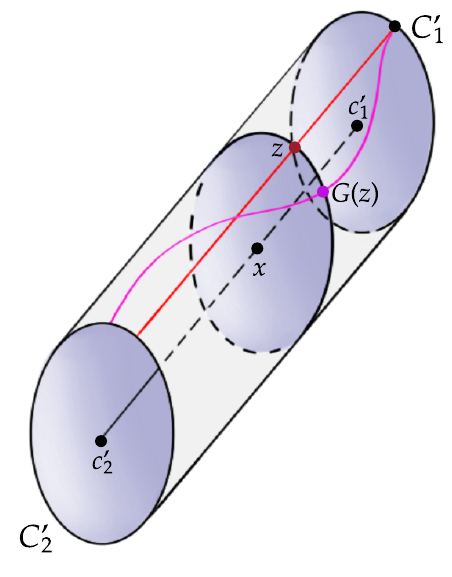}
	\captionof{figure}{Consider the red curve intersecting each slice of the bisector $B[C'_1,C'_2]$ once. If $z$ is in this red curve, then it can be writen as $z=x+\gamma q$, where $x$ is a representative of the center of the disc containing $z$ and satisfying $\langle x,x \rangle =-1$, and $G(z)$ will be a rotation depending on the center $x$. Therefore, for $x$ near $c_1'$ we have $G(x+\gamma q) = \big[x+\exp(i\theta(x))\gamma q\big]$, with $\theta(c_1')=0$. }
	\label{twistedtube}
\end{figure}

We want the red and the pink curve to coincide near $C'_1$.

\begin{lemma} \label{lema difeomorfismo} We can modify the diffeomorphism $F:\mathcal{Q'} \to \mathcal Q$ in such a way it still maps slices to slices isometrically and
\[F(x +\gamma q) = I_1F(R_1^{-1}x + (\alpha_3/\alpha_1)^{-1}\gamma  q) \quad \textrm{for} \quad  x \in N_1 \cap \G[c_1',c_2'],\quad \langle x,x \rangle =-1,\]
for some neighborhood $N_1$ of $c_1'$ in $\PP(q^\perp)$.
\end{lemma}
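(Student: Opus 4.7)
The plan is to apply a localized ``twist'' to $F$ along the bisector $\B[C_1',C_2']$ by a slice-wise fiber rotation that cancels, near $c_1'$, the angular discrepancy $\theta(x)$ provided by the map $G$ introduced just above the statement.

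First, fix a smooth function $\varphi\colon\G[c_1',c_2']\to[0,1]$ identically equal to $1$ on a neighborhood $N_1\cap\G[c_1',c_2']$ of $c_1'$ and identically equal to $0$ outside a slightly larger neighborhood chosen small enough that the angle function $\theta$ remains smoothly defined on the support of $\varphi$. Replace $F$ by the map $\tilde F$ that agrees with $F$ on the bisectors $\B[C_2',C_3']$, $\B[C_3',C_4']$ and $\B[C_4',C_1']$, and on $\B[C_1',C_2']$ is given by
\[
\tilde F\bigl([x+\gamma q]\bigr) := F\bigl([x + e^{i\varphi(x)\theta(x)}\gamma q]\bigr),
\]
where $x\in\G[c_1',c_2']$ parametrizes the real spine and $\gamma\in\DD$ the fiber coordinate. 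Because this correction rotates each slice of $\B[C_1',C_2']$ about its own center, which is a hyperbolic isometry of the Poincar\'e disc, $\tilde F$ still maps slices isometrically to slices; smoothness across the vertex $C_2'$ and along the boundary of the support of $\varphi$ is automatic, as $\varphi\theta$ vanishes identically there.

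It remains to check the stated identity with $\tilde F$ in place of $F$. On $\B[C_4',C_1']$ one has $\tilde F=F$, so the right-hand side of the identity becomes $I_1 F\bigl([R_1^{-1}x + (\alpha_3/\alpha_1)^{-1}\gamma q]\bigr)$. By construction of $\tilde F$, applying $\tilde F^{-1}$ to this point amounts to applying $F^{-1}$ and then rotating the fiber of the resulting slice by $e^{-i\varphi(x')\theta(x')}$, where $x'$ is the slice-center. Combined with the formula $F^{-1}I_1 F\bigl([R_1^{-1}x + (\alpha_3/\alpha_1)^{-1}\gamma q]\bigr) = G([x+\gamma q]) = [x + e^{i\theta(x)}\gamma q]$ coming from the description of $G$ above the statement, this yields
\[
\tilde F^{-1}\bigl(I_1 F\bigl([R_1^{-1}x + (\alpha_3/\alpha_1)^{-1}\gamma q]\bigr)\bigr) = [x + e^{i(1-\varphi(x))\theta(x)}\gamma q].
\]
For $x\in N_1\cap \G[c_1',c_2']$ we have $\varphi(x)=1$, so the right-hand side is $[x+\gamma q]$; applying $\tilde F$ to both sides gives precisely the equation of the lemma, after renaming $\tilde F$ back to $F$.

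The only real obstacle is bookkeeping: one must verify that this local modification does not interfere with the analogous corrections that will be required near the remaining vertices when computing the Euler number. Since the modification is supported in an arbitrarily small neighborhood of $C_1'$, this is guaranteed by shrinking the four local supports so as to be pairwise disjoint.
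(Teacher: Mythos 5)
Your proof is correct and follows essentially the same route as the paper's: both define the angle discrepancy $\theta$ via $F^{-1}I_1F$, multiply it by a cutoff equal to $1$ near $c_1'$ and vanishing away from it, and absorb the resulting fiberwise rotation into $F$, which preserves the slice-to-slice isometry property since each slice is rotated about its own center. The only cosmetic difference is that the paper extends the cutoff angle function to the whole quadrilateral $P$ (so the modified map lives on the solid polyhedron $Q'$ and automatically vanishes on the other three geodesic sides), whereas you modify $F$ only on the face $\B[C_1',C_2']$; for the statement as given this is equivalent.
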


\textbf{Proof:} On the vertices we have
\[\gamma F(c_i'+\theta q) = F(c_i'+\gamma\theta q),\]
and therefore the desired identity holds on $c_1'$.

By continuity, for a small neighborhood $V$ of $c_1'$ on the geodesic $\G[c_1',c_2']$ we have a smooth function $\theta:V \to \RR$ satisfying \[F^{-1}I_1F\big(R_1^{-1}x+ (\alpha_3/\alpha_1)^{-1}\gamma q\big) = \big[x+\exp(i\theta(x))\gamma q\big] \quad \textrm{for} \quad  x \in V \cap \G[c_1',c_2'],\quad \langle x,x \rangle =-1.\]
In particular, we may suppose $\theta(c_1') = 0$.

There is $\widetilde\theta$ in $\G[c_1,c_2]$ such that $\theta(x) =\widetilde\theta(x)$ in a small compact neighborhood $N_1 \subset V$ of $c_1'$ such that $\mathrm{supp} (\widetilde\theta)\subset V$. Furthermore, we can extend $\widetilde\theta$ to all the quadrilateral $P$ in such a way that $\widetilde\theta$ is zero over the geodesics $\G[c_2',c_3']$, $\G[c_3',c_4']$, and $\G[c_4',c_1']$. Therefore, we can consider $\widehat{F}(x+\gamma q) = F\Big(x+\exp\big(i\widetilde\theta(x)\big)\gamma q\Big)$, with $\langle x,x \rangle = -1$. With this new map we have
\[\widehat F(x +\gamma q) = I_1\widehat F\big(R_1^{-1}x + (\alpha_3/\alpha_1)^{-1}\gamma  q\big) \quad \textrm{for} \quad  x \in N_1 \cap \G[c_1',c_2'],\quad \langle x,x \rangle =-1,\]
where we are using $\widetilde\theta(R^{-1}x) =0$ because $R_1^{-1}x \in G[c_4',c_1']$.
\qed

\medskip

We may also suppose the same kind of property for the other vertices: For $i=2,3,4$ we have a small neighborhood $N_i$ of the point $c_i'$ in $\PP(q^\perp)$ such that
\begin{align*}
&F(x + \gamma q) = I_3^{-1} F\big(R_3x+(\beta_3/\beta_1)\gamma q\big) &\textrm{for}&\quad  x \in N_2 \cap \G[c_2',c_3'],\quad \langle x,x \rangle =-1,
\\
&F(x+\gamma q) = I_3 F\big(R_3^{-1}x +  (\gamma_3/\gamma_1)\gamma q\big) &\textrm{for}&\quad  x \in N_3 \cap\G[c_3',c_4'],\quad \langle x,x \rangle =-1,
\\
&  F(x+\gamma q)=I_1^{-1}F\big(R_1x+ \gamma q\big)&\textrm{for} &\quad  x\in N_4 \cap \G[c_4',c_1'],\quad \langle x,x \rangle =-1.
\end{align*}

\subsection{Constructing complex hyperbolic disc orbigoodles over \texorpdfstring{$\SP^2(n_1,n_2,n_3)$}{S2(n1,n2,n3)}}
\label{subsection orbibundle}

An {\bf $n$-orbifold $B$} is a space locally modeled by quotients of the form $\DD^n/\Gamma$, where $\Gamma$ is a finite subgroup of $\mathrm{O}(n)$. All orbifolds considered in this paper are locally oriented, which means that we are only considering trivializations with $\Gamma \subset \mathrm{SO}(n)$. More technically by space we mean diffeological space (for details see \cite{bot}). A diffeomorphism $\phi:\DD^n/\Gamma \to D$, where $D$ stands for an open subset of $B$, is called {\bf orbifold chart}. Furtheremore, if $\phi([0]) = p$ we say that the orbifold chart is centered at $p$. We say that $p$ is a regular point if the finite group $\Gamma$ corresponding to a chart centered at $p$ is trivial, that is, the orbifold is locally Euclidian around $p$; the point is called singular otherwise and the order of the singular point is the cardinality of the group $\gamma$. Since we are interested in orbibundles over $2$-orbifolds, the groups $\Gamma's$ are generated by $\exp(2\pi i/n)$, where we think of $\DD^2$ as the unit open ball on the complex plane.

\begin{defi}(see \cite[3.1.  Orbibundles]{bot})\label{definition orbibundle} Consider a smooth map between orbifolds $\zeta: L \to B$. We say $\zeta$ is a {\bf disc orbibundle}, if for every point $p \in B$ there is an orbifold chart $\phi:\DD^n/\Gamma \to D$ centered at $p$ satisfying the following properties:
    \begin{itemize}
        \item there is a smooth action of $\Gamma$ on $\DD^n \times \DD^2$ of the form $h(x,f) = (hx,a(h,x)f)$, where\break $a: \Gamma \times \DD^n \to \mathrm{Diff}(\DD^2)$ is smooth and $\mathrm{Diff}(\DD^2)$ stands for the group of diffeomorphisms of~$\DD^2$;
        \item there is a diffeomorphism $\Phi:(\DD^n \times \DD^2)/\Gamma \to \zeta^{-1}(D)$ such that the diagram

		\begin{equation*}
		\begin{tikzcd}
		(\DD^n \times \DD^2)/\Gamma \arrow[rr, "\Phi"] \arrow[d,"\mathrm{pr}_1"'] &  & \zeta^{-1}(D) \arrow[d, "\zeta"] \\
		\DD^n/\Gamma \arrow[rr, "\phi"]                                &  & D
		\end{tikzcd}
		\end{equation*}
	    commutes, where $\mathrm{pr}_1([x,f])=[x]$.
	   \end{itemize}

\end{defi}

A {\bf disc orbigoodle} (see \cite[Definition  23]{bot}) is a special case of disc orbibundle. Consider a simply-connected manifold $\HH$ on which acts a group $G$ properly discontinuously. If we have an action of $G$ on $\HH \times \DD^2$ by diffeomorphisms of the form $g(p,v) =(gp, a(g,p)v) $ then the quotient $(\HH \times \DD^2)/G \to \HH/G$ is a disc orbibundle. Such orbibundles are called disc orbigoodles. All disc orbibundles of this paper are disc orbigoodles where $\HH$ is the hyperbolic plane.

\smallskip

A natural \/$\SP^1$-action is defined on $Q'$ (the polyhedron $Q'$ is defined right after Lemma \ref{deformationlemma}) because
\[Q'=\bigcup_{x \in P}\big(\mathrm L[q,x]\cap \HH_\CC^2\big),\]
where $\mathrm L[q,x]$ is the complex projective line connecting $q$ and $x$,
and each disc $\overline{\HH_\CC^2} \cap \mathrm L[q,x] $ has the point $x$ as center. The action we define is simply given by rotation around $x$,
\[\gamma[x+\theta q] = [x+\gamma \theta q],\]
where $\langle x,x \rangle =-1$, $\gamma \in \SP^1$, and $|\theta|\leq 1$. Therefore, we can define an $\SP^1$-action on $Q$ using the diffeomorphism $F$. Since $F$ is an isometry at the level of the discs foliating the quadrangles $\mathcal Q',\mathcal Q$, $I_1\mu_4=\mu_1$ and $I_3\mu_2=\mu_3$ (remind that the curves $\mu_i$'s are image under $F$ of the curves defining the boundary of the quadrilateral $P$), we conclude
$$\gamma I_1F(x)=I_1 \gamma F(x) \quad \text{for}\quad x\in B[C_1',C_4'],
$$
$$\gamma I_3F(x)=I_3\gamma F(x) \quad \text{for}\quad x\in B[C_2',C_3'],
$$
$\gamma \in \SP^1$. In particular, since $F(c_i')=c_i$ for each vertex $C_i=F(C_i')$, we obtain that $F(\gamma x)$ is the rotation of $F(x)$ with respect to the center $c_i$ of $C_i$ and angle given by the unitary complex number~$\gamma$.

Note that the image of the quadrilateral $P$ under $F$ in addition to the action of $G$ on $\HH_\CC^2$ provides an embedded disc $D$ transversal to all discs foliating $\HH_\CC^2$ and stable under action of $\SP^1$. Hence, the quotient $L: = \HH_\CC^2/G\to D/G$ is a disc orbigoodle and by construction $D/G = \SP^2(n_1,n_2,n_3)$.

Furthermore, from $\partial_\infty Q$ we can build the $\SP^1$-orbibundle $\SP^1(L) \to D/G$, from which we will deduce the formula for the Euler number of the disc bundle $ L \to D/G$. Let $\pi: \partial_\infty Q \to \SP^1(L)$ be the quotient map.
It is interesting to note that the action on $\partial L$ is not necessarily principal, i.e., there are points $x \in \partial_\infty Q$ such that the map $\SP^1 \ni \gamma \mapsto \pi(\gamma x) \in \SP^1(L)$ is non-injective. More precisely, the action fails to be principal on the circles $\pi(\partial C_j)$'s.

Take a small ball $V_i$ of radius $r$ and center $c_i'$ on $P$ for $i=1,2,3,4$. Let's see what happens nearby these non-principal circles. Without loss of generality, we will work with $i=1$. We have the open set \[U \coloneq  F\left[\bigcup_{x \in V_1} \mathrm{L}(x,q) \cap \partial \HH_\CC^2 \right]\]
of $\partial_\infty Q$ and the open set $W \coloneq  \pi(U)$ in $\SP^1(L)$. Let $p'_1$ be the orthogonal point $c'_1$ on the projective line $\PP(q^\perp)$ such that $\langle p'_1,p'_1 \rangle =1$, $c'_2 \in \RR c'_1+\RR p'_1$ and the geodesic curve $t \mapsto [\cosh(t)c'_1+\sinh(t)p'_1]$ reaches $c'_2$ for some $t>0$, that is, this curve goes from $c'_1$ to $c'_2$.

Consider the map $\Lambda: \SP^1 \times S \to W$ given by
\[\Lambda (\gamma,z) =\pi \circ F \left[\frac{c_1' + z p_1'}{\sqrt{1-|z|^2}} + \gamma q\right],\]
where $S$ is the intersection of $\overline{\DD_\epsilon^2} \subset \CC$, the disc of center $0$ and radius $\epsilon$ such that $\cosh(r) = 1/\sqrt{1-\epsilon^2}$, and the sector given by the inequality $0\leq \arg(z)\leq 2\pi/n_1$. The sides of $S$ can be glued because, if $z$ is real, $\Lambda(\gamma,z) = \Lambda(\gamma,\xi z)$, where $\xi = \exp(2\pi/n_1)$. Therefore, we have the smooth map
$$\Lambda:\SP^1 \times \big(\overline{\DD_\epsilon^2}/\langle \xi \rangle\big) \to W,
$$
and using the natural projection $\overline{\DD_\epsilon^2} \to \overline{\DD_\epsilon^2}/\langle \xi \rangle$, we have the smooth map
$$\widetilde\Lambda:\SP^1 \times \overline{\DD_\epsilon^2} \to W.
$$

Remember the eigenvalues of $I_1$ are $\alpha_1$, $\alpha_2$ and $\alpha_3$. Let $e^{2\pi i l_1/n_1} = \alpha_3/\alpha_1$ and $e^{-2\pi i /n_1} = \alpha_2/\alpha_1$, with $0\leq l_1 < n_1$.

Taking the diffeomorphism $\eta(\gamma,z) \coloneq  (\xi^{l_1}\gamma ,\xi^{-1} z)$ on $\SP^1 \times \overline{\DD_\epsilon^2}$, we have the equivariant diffeomorphism
\[\widehat\Lambda:\big(\SP^1 \times \overline{\DD_\epsilon^2}\big)/\langle \eta \rangle \to W\]
as a consequence of lemma \ref{lema difeomorfismo}. So $W$ is a solid torus (see \cite[Lemma 20]{bot}) with an $\SP^1$ action which is principal except for the circle $\big(\SP^1 \times 0\big)/\langle \eta \rangle$. Hence we have a trivialization of the $\SP^1$-orbibundle around the fiber $\pi(\partial_\infty C_1)$.

If we write $\beta_3/\beta_1 = e^{2\pi i l_2/n_2}$ and $\gamma_3^{-1}/\gamma_1^{-1} = e^{2\pi i l_3/n_3}$, with $1\leq l_1 <n_i$, we obtain the same kind of trivialization of the $\SP^1$-orbibundle as described above for $\pi(\partial_\infty C_2)$ and $\pi(\partial_\infty C_3)$.

\subsection{An integer contribution to the Euler number}\label{subsection meridional curve}
We now tackle the problem of calculating the Euler number of the constructed orbibundles. First, we need to introduce a particular curve $d$ for the quadrangle~$\mathcal Q$.

\begin{figure}[H]
	\centering
	\includegraphics[scale=.8]{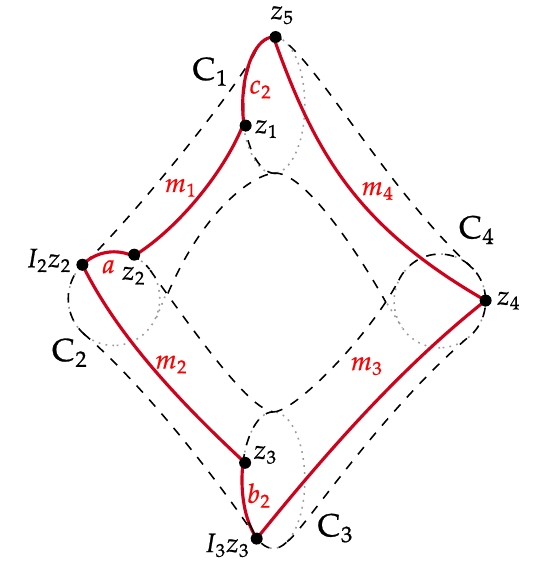}
	\caption{Meridional curve.}
	\label{meridionalcurvefigure}
\end{figure}

Take a point $z_1$ on $\partial_\infty C_1$ and define the following curves:

\smallskip

$\bullet$  the meridional curve $m_1\subset\partial_\infty\B[C_1,C_2]$ that
begins at $z_1\in\partial_\infty C_1$ and ends at
$z_2\in\partial_\infty C_2$;

$\bullet$ the naturally oriented simple arc
$a\subset\partial_\infty C_2$ that begins at $z_2$ and ends at
$I_2 z_2$;

$\bullet$ the meridional curve $m_2\subset\partial_\infty\B[C_2,C_3]$ that
begins at $I_2 z_2$ and ends at $z_3\in\partial_\infty C_3$;

$\bullet$ the naturally oriented simple arc $b_2\subset\partial_\infty S$
that begins at $z_3$ and ends at $I_3z_3$;

$\bullet$ the meridional curve $m_3\subset\partial_\infty\B[C_3,C_4]$ that
begins at $I_3z_3$ and ends at $z_4\in\partial_\infty C_4$;

$\bullet$ the meridional curve $m_4\subset\partial_\infty\B[C_4,C_1]$ that
begins at $z_4$ and ends at $z_5\in\partial_\infty C_1$;

$\bullet$ the naturally oriented simple arc $c_2\subset\partial_\infty C_1$
that begins at $z_5$ and ends at $z_1$.

\smallskip

Note that $z_4 = I_3 I_2 z_2$, because $I_3 m_2 = m_3$. Therefore, $z_4 = I_1^{-1}z_2$ and consequently $z_5 = I_1^{-1}z_1$.

Let
\begin{equation} \label{curved}d \coloneq  m_1 \cup a \cup m_2 \cup b_2 \cup m_3 \cup m_4 \cup c_2
\end{equation}
and let $s$ stand for a generator of $H_1(\partial_\infty Q,\ZZ)$. Then there exists $f\in \ZZ$ such that $d=fs$ in $H_1(\partial_\infty Q,\ZZ)$. This integer $f$ is an important component of the Euler number of the orbibundles we will encounter in Section \ref{subsection euler number}. It will be expressed in a more computational friendly manner in Section \ref{subsection holonomy of the quadrangle}.

\subsection{Euler Number of the constructed disc bundles}\label{subsection euler number}
\begin{defi} \label{Def euler number}
If $M \to B$ is an $\SP^1$-orbibundle over a oriented compact connected $2$-orbifold with singular points $x_1, \ldots,x_n$ then the Euler number is calculated as follows: Take a regular point $x_0$ and for each $i=0,\cdots,n$ consider a small smooth closed disc $D_i$ centered at $x_i$ trivializing the $\SP^1$-orbibundle $M \to B$. The $\SP^1$-orbibundle restricted over the surface with boundary $B'=B\setminus \sqcup_i D_i$ is trivial, since $\SP^1$-bundles over graphs are trivial and $B'$ is homotopically equivalent to a graph. Consider a section $\sigma$ for $M|_B' \to B'$ and a fiber $s$ over a regular point, oriented accordingly to action of $\SP^1$ on $M$. The Euler number of the $\SP^1$-orbibundle $M \to B$ is defined by the identity
$$\sigma|_{\partial B'} = - e(M)s$$
in $H_1(M,\QQ)$ (See \cite[Definition 16]{bot}).
\end{defi}

We now prove the following proposition:

\begin{prop}\label{prop Euler number} Assume that we have constructed a disc orbibundle using the quadrangle of bisectors satisfying the quadrangle conditions in accordance with the Sections \ref{section discreteness} and \ref{subsection orbibundle}.
Let $l_1,l_2,l_3$ be least non-negative integers such that $$\alpha_3/\alpha_1 = e^{2\pi i l_1/n_1}, \qquad \beta_3/\beta_1 = e^{2\pi i l_2/n_2},\qquad \gamma_3^{-1}/\gamma_1^{-1} = e^{2\pi i l_3/n_3}.$$

The Euler number of the disc orbibundle $M \to B$ is
\[e(M) = f-\frac{l_1}{n_1}- \frac{l_2}{n_2} - \frac{l_3}{n_3},\]
where $f$ is the parameter defined in Section \ref{subsection meridional curve}.
\end{prop}

{\bf Proof.} Following \cite[3.2.  Euler number of $\SP^1$-orbibundles over $2$-orbifolds]{bot} the Euler number of the disc orbibundle $L \to D/G$ described in the Section \ref{subsection orbibundle} is the Euler number of the $\SP^1$-orbibundle $\SP^1(L) \to D/G$.

Now we apply the Definition \ref{Def euler number} of Euler number to the particular bundle $\SP^1(L) \to D/G$. Let us also denote $\SP^1(L)$ by $M$ and $D/G$ by $B$. Remember that $B$ is the quotient of the hyperbolic plane by the turnover group. Here we think of $B$ as the quotient of $P$ by the gluing relations described by the turnover group (the quadrilateral $P$ is the fundamental domain for the turnover group as described in Section \ref{turnover definition}). Hence we denote the point under the fiber $\pi(\partial C_i)$ by $[c_i']$. The points $[c_i']$ are the only singular points of $B$.

\begin{figure}[H]
	\centering
	\begin{minipage}{.5\textwidth}
		\centering
		\includegraphics[scale = .59]{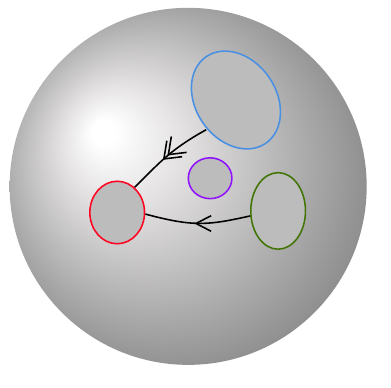}
		\caption*{(a)}
	\end{minipage}%
	\begin{minipage}{.5\textwidth}
		\centering
		\includegraphics[scale =1]{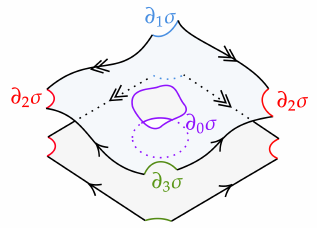}
		\caption*{(b)}
	\end{minipage}
	\caption{\textbf{(a)} Surface $B'$, and \textbf{(b)} Section $\sigma:B' \to M'$}
 \label{section}
\end{figure}
Removing small open discs $D_1$, $D_2$ and $D_3$ on $B$ around the three singular points $[c_1'],[c_3'],[c_2']=[c_4']$ and one small disc $D_0$ around a regular point $[x_0]$ in $B$, with $x_0 \in \mathring P$, we have the surface with boundary $B'\coloneq  B \setminus \sqcup_i D_i$. The $3$-manifold $M'= \zeta^{-1}(B')$ is a principal $\SP^1$-bundle over $B'$. Notice that $M \setminus M'$ is made of four solid tori $W_0,W_1, W_2, W_3$, where $W_i = \zeta^{-1} D_i$.

For any section $\sigma: D' \to M'$, let's denote $\sigma|_{\partial D_i}$ by $\partial_i \sigma$. See Figure \ref{section}. Remember the curve $d$ defined in Section \ref{subsection meridional curve}. Shrinking $F^{-1}(d)$  inside the torus $\partial_\infty Q'$ we can build a section $\sigma: D' \to M'$ satisfying the identities (See Figure \ref{meridionalcurvefigure}) $$\partial_0\sigma=\pi(d),\quad\partial_1\sigma  =-\pi(c_2),\quad\partial_2\sigma=-\pi(b_2),\quad\partial_3\sigma=-\pi(a)$$ in $H_1(M, \QQ)$.

The identity $n_i\partial_i\sigma=-l_i\omega_i$ in $H_1(M,\QQ)$ holds for $i=1,2,3$, where $\omega_i$ is the orbit of a point in $\partial W_i$. Furthermore, $\omega_0 = \omega_1 = \omega_2 = \omega_3 = s$ in $H_1(M,\QQ)$.

Let us prove the identity  $n_i \partial_i \sigma = -l_i\omega_i$ for $i=1$.

Consider a generator $s'$ of the fundamental group of $\pi(\partial C_1)$.
\begin{figure}[H]
	\centering
	\begin{minipage}{.3\textwidth}
		\centering
		\includegraphics[scale = 1]{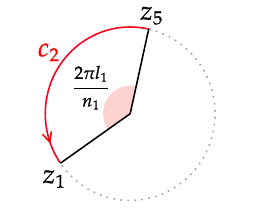}
		\caption*{(a)}
	\end{minipage}
	\begin{minipage}{.3\textwidth}
		\centering
		\includegraphics[scale = 1]{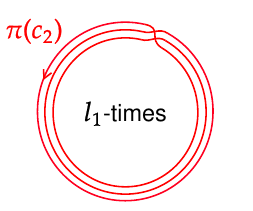}
		\caption*{(b)}
	\end{minipage}
	\begin{minipage}{.3\textwidth}
		\vspace{-1.3cm}
		\centering
		\includegraphics[scale = 1]{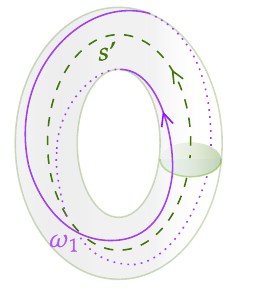}
		\caption*{(c)}
	\end{minipage}
\caption{\textbf{(a)} Curve $c_2$ in $\partial C_1$, \textbf{(b)} loop $\pi(c_2)$ in $\pi(\partial C_1)$, and \textbf{(c)} loop $\omega_1$ on the solid torus $W_1$.}
\label{eulernumber}
\end{figure}

We can think of $\omega_1$ as $\SP^1 \to M$ given by
$$\gamma \mapsto \pi \circ F \left[\frac{(c_1'+ z p_1')}{\sqrt{1-|z|^2}}+\gamma q\right]
$$
for a fixed $z$.

Notice $\omega_1 = n_1s'$, because $\omega_1$ is homotopic to the curve $\gamma \mapsto [c_1' + \gamma q]$ in $M$, which is a curve that goes $n_1$ times around the circle $\pi(\partial C_1)$, and $\partial_1\sigma=-l_1s'$, because $\partial_1\sigma=-\pi(c_2)$ in $M$ and $\pi(c_2)$ goes $l_1$ times around the circle $\pi(\partial C_1)$, since in $\partial C_1$ the curve $c_2$ is constructed as the curve going from $z_5$ to $z_1$ following the natural orientation of the circle and $I_1 z_1 = z_5$.
See Figure \ref{eulernumber}.

Therefore, we have
\[n_1\partial_1\sigma=-l_1\omega_1\quad\text{in}\quad H_1(M,\QQ).\]

In the case of $i=0$, we have $\partial_0\sigma=\pi(d)$ and, therefore, we have $\partial_0\sigma=f\omega_0$ in $H_1(M',\ZZ)$, because $d=fs$.

Note $\partial D_i$ is oriented in opposite direction of $\partial B'$. Therefore, in $H_1(M,\mathbb Q)$ we can write
\[\partial \sigma = \sum_{i=0}^3 - \partial_i \sigma  = \left(-f+\frac{l_1}{n_1}+\frac{l_2}{n_2}+\frac{l_3}{n_3}\right)s\]
and, therefore,
\[e(M) = f-\frac{l_1}{n_1}- \frac{l_2}{n_2} - \frac{l_3}{n_3}. \eqno{_\blacksquare}\]

\subsection{Holonomy of triangle of bisectors: computing the parameter $f$}\label{subsection holonomy of the quadrangle}

In Section \ref{subsection meridional curve} we define the curve $d$, shown in Figure \ref{meridionalcurvefigure}, and the integer $f$, necessary to calculate the Euler number.
In order to express this integer explicitly we use the concept of holomony of a transversal triangle of bisectors.

Given a counterclockwise oriented transversal triangle of bisectors $\Delta(L_1,L_2,L_3)$, let $M_1,M_2,M_3$ be the middle slices (see Section \ref{subsection bisectors}) of the segments of bisectors $\B[L_1,L_2]$, $\B[L_2,L_3]$, $\B[L_3,L_1]$. The product $I$ of the reflections in the middle slices $M_1$, $M_2$, $M_3$ (in that order) is called the {\it holonomy\/} of the triangle $\Delta(L_1,L_2,L_3)$ \cite[Section 2.5.1]{discbundles}. Note that $I$ stabilizes $L_1$.

The triangle $\Delta(L_1,L_2,L_3)$ is respectively called {\it elliptic, parabolic,} or {\it hyperbolic\/} when the holonomy $I$ restricted to $L_1$ is an elliptic, parabolic, or hyperbolic isometry of the Poincar\'e disc $L_1$. The holonomy of a counterclockwise oriented transversal triangle cannot be trivial, that is, $I$ restricted to $L_1$ is never the identical isometry; moreover, parabolic triangles are always $L$-parabolic, that is, the holonomy restricted to $L_1$ moves its non-fixed points in the clockwise sense  \cite[Theorem 2.24]{discbundles}. In the case of a hyperbolic triangle, the action of $I$ on $L_1$ divides $\partial_\infty L_1$ into the $L$ and $R$-parts: the $L$-part (respectively, the $R$-part) consists of those points that are moved by $I$ in the clockwise sense (respectively, counterclockwise sense). In the elliptic and parabolic cases, all (non-fixed) points belong to the $L$-part.

A simple closed curve in the torus $\partial_\infty\Delta(L_1,L_2,L_3)$ is called a {\it trivialing curve} of the triangle if it generates the fundamental group of the solid torus $\Delta(L_1,L_2,L_3)$ and is contractible in the ideal boundary of the polyhedron bounded by $\Delta(L_1,L_2,L_3)$ (see Section \ref{section discreteness}).

\smallskip

As introduced in Section \ref{subsection meridional curve}, let
\begin{equation}\label{meridional curve}
d\coloneq m_1\cup a\cup m_2\cup b_2\cup m_3\cup m_4\cup c_2
\end{equation}
be the oriented closed curve in the boundary of the solid torus $\partial_\infty Q$, where $Q$ stands for the polyhedron of the quadrangle $\mathcal Q$. Remind that the group $H_1(\partial_\infty Q,\ZZ)$ is generated by $[s]$, where $[s]$ stands for the naturally oriented boundary of $C_1$. Hence, $[d]=f[s]$ for some $f\in\ZZ$. In order to express $f$ in terms of the holonomies of the
triangles $\Delta(C_1,C_2,C_4)$ and $\Delta(C_3,C_4,C_2)$, we introduce more
points and curves:

\medskip

$\bullet$ the meridional curve $m_2'\subset\partial_\infty\B[C_2,C_3]$
that begins at $z_2$ and ends at $z_3'\in\partial_\infty C_3$;

$\bullet$ the meridional curve $m\subset\partial_\infty\B[C_2,C_4]$ that
begins at $z_2$ and ends at $z_4'\in\partial_\infty C_4$;

$\bullet$ the meridional curve $m_3'\subset\partial_\infty\B[C_4,C_3]$ that
begins at $z_4'$ and ends at $z_3''\in\partial_\infty C_3$;

$\bullet$ the naturally oriented arc $b\subset\partial_\infty C_3$ that
begins at $z_3'$ and ends at $z_3''$;

$\bullet$ the meridional curve $m_4'\subset\partial_\infty\B[C_4,C_1]$ that
begins at $z_4'$ and ends at $z_5'\in\partial_\infty C_1$;

$\bullet$ the naturally oriented arc $c\in\partial_\infty C_1$ that
begins at $z_5'$ and ends at $z_1$.

\begin{figure}[H]
	\centering
	\includegraphics[scale=.75]{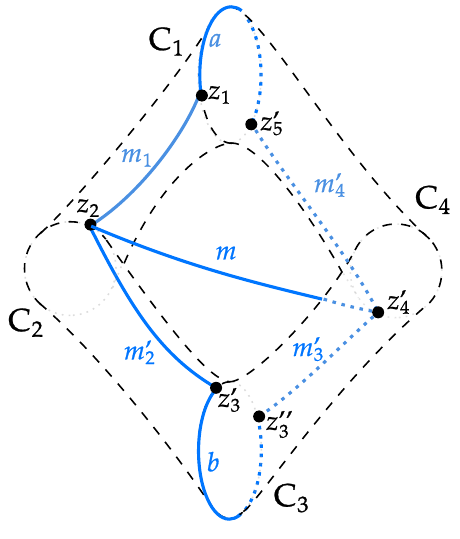}
	\caption{Auxiliary curves}
    \label{fig: aux curves}
\end{figure}

Denote by $I$ the holonomy of the triangle $\Delta(C_1,C_2,C_4)$ and by $J$
the holonomy of the triangle $\Delta(C_3,C_4,C_2)$. By the definition of
holonomy of a triangle, we have $z_3''=J^{-1}z_3'$ and $z_5'=Iz_1$.

Let us assume that $z_1$ belongs to the $L$-part of $\Delta(C_1,C_2,C_4)$ and that $z_3'$ belongs to the $L$-part of $\Delta(C_3,C_4,C_2)$ (this is harmless because all the triangles that appear in the constructed orbibundles are elliptic). In this case, by \cite[Theorem 2.24]{discbundles}, the closed oriented curve $m_1\cup m\cup m_4'\cup c$ is a trivializing curve of $\Delta(C_1,C_2,C_4)$. Similarly, ${m_3'}^{-1}\cup m^{-1}\cup m_2'\cup b$ is a trivializing curve of the triangle $\Delta(C_3,C_4,C_2)$. (We denote by $x^{-1}$ the (not necessarily closed) curve $x$ taken with the opposite orientation.) By \cite[Remark 2.21]{discbundles}, $m_1\cup m_2'\cup b\cup{m_3'}^{-1}\cup m_4'\cup c$ is a trivializing curve of the quadrangle $\mathcal Q$, that is, it generates the fundamental group of $\mathcal Q$ and is contractible in $\partial_\infty Q$. In terms of $1$-chains modulo boundaries, this means that
\begin{equation}\label{holonomy identity 1}
[m_1]+[m_2']+[b]-[m_3']+[m_4']+[c]=0.
\end{equation}

Finally, we introduce the following arcs:

\medskip

$\bullet$ the naturally oriented simple arc $b_1\subset\partial_\infty C_3$
that begins at $z_3'$ and ends at $z_3$;

$\bullet$ the naturally oriented simple arc $b_3\subset\partial_\infty C_3$
that begins at $I_3z_3$ and ends at $z_3''$;

$\bullet$ the naturally oriented simple arc $c_1\subset\partial_\infty C_1$
that begins at $z_5$ and ends at $z_5'$.

\medskip

\begin{figure}[H]
	\centering
	\includegraphics[scale=.8]{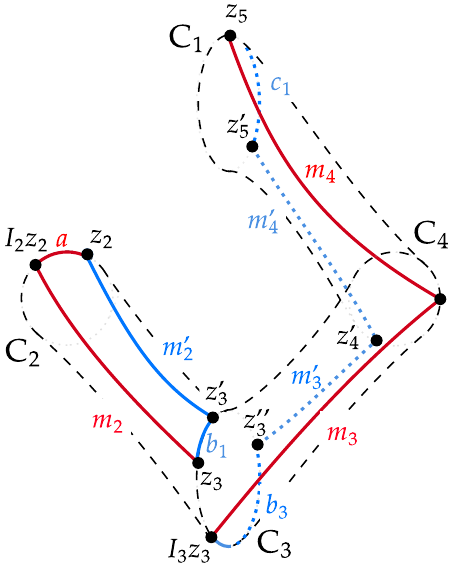}
	\caption{Cylinders $\partial_\infty\B[C_2,C_3]$ and $\partial_\infty\B[C_3,C_4]\cup\partial_\infty\B[C_4,C_1]$.}
 \label{thecylinders}
\end{figure}

By looking at the cylinder $\partial_\infty\B[C_2,C_3]$, represented in Figure \ref{thecylinders}, it is easy to see
that
\begin{equation}\label{holonomy identity 2}
[a]+[m_2]-[b_1]-[m_2']=0.
\end{equation}

Similarly, by considering the cylinder
$\partial_\infty\B[C_3,C_4]\cup\partial_\infty\B[C_4,C_1]$, one obtains that
\begin{equation}\label{holonomy identity 3}[m_3]+[m_4]+[c_1]-[m_4']+[m_3']-[b_3]=0.\end{equation}

It follows from equations \eqref{meridional curve}, \eqref{holonomy identity 2}, and \eqref{holonomy identity 3} that
\begin{multline}\label{holonomy identity 4}
[d]=[m_1]+[a]+[m_2]+[b_2]+[m_3]+[m_4]+[c_2]=\\
[m_1]+[m_2']+[b_1]+[b_2]+[b_3]-[m_3']+[m_4']-[c_1]+[c_2].
\end{multline}

We introduce the following notation. Let $C$ be an oriented circle and
let $t_1,t_2,t_3\in C$. We define $o(t_1,t_2,t_3)=1$ if $t_1,t_2,t_3$ are
pairwise distinct and not in cyclic order. Otherwise, we put
$o(t_1,t_2,t_3)=0$.

\medskip

\begin{lemma}\label{holonomy lemma} Let\/ $C$ be an oriented circle and let\/
	$t_1,t_2,t_3,t_4\in C$ be such that\/ $t_3\ne t_1\ne t_4$. Following the
	orientation of\/ $C$, we define four simple arcs\/ {\rm(}some of them may
	consist of a single point\/{\rm):} $a_i\subset C$ joining\/ $t_i$ and\/
	$t_{i+1}$ for\/ $i=1,2,3$ and\/ $a\subset C$ joining\/ $t_1$ and\/ $t_4$.
	Then we have
	$$[a_1]+[a_2]+[a_3]-[a]=o(t_1,t_2,t_3)[C]+o(t_3,t_4,t_1)[C]$$
	in\/ $C_1(C,\ZZ)/\partial C_0(C,\ZZ)$.
	{\rm(}Of course, we take\/ $[C]$ as a generator of\/
	$H_1(C,\ZZ)$.{\rm)}
\end{lemma}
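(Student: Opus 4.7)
The plan is to observe that $a_1+a_2+a_3-a$ is a cycle, to compute its class in $H_1(C,\ZZ)\cong\ZZ[C]$ by lifting to the universal cover of $C$, and to match the resulting integer with the definition of $o$. A direct computation gives
\[
\partial(a_1+a_2+a_3-a)=(t_2-t_1)+(t_3-t_2)+(t_4-t_3)-(t_4-t_1)=0,
\]
so that $[a_1]+[a_2]+[a_3]-[a]=k\,[C]$ for a unique $k\in\ZZ$, and the content of the lemma is the formula $k=o(t_1,t_2,t_3)+o(t_3,t_4,t_1)$.

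To identify $k$ I would pass to the orientation-preserving universal cover $\pi\colon\RR\to C$ with $\pi(x+1)=\pi(x)$. Every positively oriented simple arc from $x$ to $y$ in $C$ (possibly reduced to a single point when $x=y$) admits a unique lift starting at any prescribed lift $\tilde x$ of $x$, namely the interval $[\tilde x,\tilde y]\subset\RR$ with $\tilde y-\tilde x\in[0,1)$. Fixing a lift $\tilde t_1$ of $t_1$ and lifting $a_1,a_2,a_3$ successively, one obtains lifts $\tilde t_2,\tilde t_3,\tilde t_4$ with $\tilde t_{i+1}-\tilde t_i\in[0,1)$. Lifting $a$ separately from $\tilde t_1$ yields an endpoint $\tilde t_4^{\,0}$ with $\tilde t_4^{\,0}-\tilde t_1\in[0,1)$. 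Both $\tilde t_4$ and $\tilde t_4^{\,0}$ cover $t_4$, and the concatenated lifted path closes up after $k$ lattice periods, so $k=\tilde t_4-\tilde t_4^{\,0}\in\ZZ_{\geq 0}$.

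Finally I would compute this integer by a short case analysis. Parameterizing $C=\RR/\ZZ$ with $t_1\mapsto 0$ and writing $x_i\in[0,1)$ for the coordinate of $t_i$, the recursion forces $\tilde t_3=x_3+\epsilon_1$ with $\epsilon_1\in\{0,1\}$ equal to $1$ exactly when $x_3<x_2$, and then $\tilde t_4=x_4+\epsilon_1+\epsilon_2$ with $\epsilon_2\in\{0,1\}$ equal to $1$ exactly when $x_4<x_3$; since $\tilde t_4^{\,0}=x_4$, one reads off $k=\epsilon_1+\epsilon_2$. The hypothesis $t_3\ne t_1\ne t_4$ gives $x_3,x_4>0$, and direct inspection of the definition of $o$ matches $\epsilon_1$ with $o(t_1,t_2,t_3)$ (the condition $x_3<x_2$ is precisely that the cyclic order from $t_1$ visits $t_3$ before $t_2$) and $\epsilon_2$ with $o(t_3,t_4,t_1)$. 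The main obstacle is precisely this last bookkeeping step: checking that the indicators and the combinatorial $o$ agree in each of the four cyclic configurations and, in particular, that the degenerate subcases where two of the $t_i$'s coincide make the corresponding arc a single point and simultaneously force the corresponding $o$-value to vanish by the distinctness clause in its definition, so that both sides of the lemma degenerate coherently.
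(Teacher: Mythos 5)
Your proof is correct, and it takes a genuinely different route from the paper's. The paper argues entirely inside the chain group: it introduces the auxiliary positively oriented arcs $a_4$ (from $t_4$ to $t_1$), $m_1$ (from $t_1$ to $t_3$), and $m_2$ (from $t_3$ to $t_1$), uses $t_1\ne t_4$ and $t_1\ne t_3$ to get $[a]+[a_4]=[C]$ and $[m_1]+[m_2]=[C]$, verifies the two three-point base cases $[a_1]+[a_2]-[m_1]=o(t_1,t_2,t_3)[C]$ and $[a_3]+[a_4]-[m_2]=o(t_3,t_4,t_1)[C]$ ``by drawing the corresponding arcs,'' and then adds. You instead observe that $a_1+a_2+a_3-a$ is a cycle, pass to the universal cover $\RR\to C$, and read off the coefficient of $[C]$ as the integer $\tilde t_4-\tilde t_4^{\,0}=\epsilon_1+\epsilon_2$, matching each jump indicator with the corresponding value of $o$. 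Your lifting argument has the advantage of replacing the paper's two pictorial base cases by an explicit coordinate computation that handles the degenerate subcases (coinciding points, single-point arcs) uniformly; the paper's decomposition is shorter and stays in the language of arcs and chains, which is the form in which the lemma is actually applied to the circles $\partial_\infty C_1$ and $\partial_\infty C_3$. One small point worth making explicit in your write-up: the identity is asserted in $1$-chains modulo boundaries, so after checking that $\partial(a_1+a_2+a_3-a)=0$ you should note that a cycle's class in that quotient is exactly its class in $H_1(C,\ZZ)$, and that the homology class of the concatenated loop $a_1*a_2*a_3*\bar a$ coincides with the chain $[a_1]+[a_2]+[a_3]-[a]$; with those two standard remarks in place, identifying the class with the winding number of the lifted path is exactly right.
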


\medskip

{\bf Proof.} Define the following oriented simple arcs: $a_4$ joining
$t_4$ and $t_1$; $m_1$ joining $t_1$ and $t_3$; and $m_2$ joining $t_3$
and $t_1$. It follows from $t_1\ne t_4$ that $[a]+[a_4]=[C]$.
Analogously, $t_1\ne t_3$ implies $[m_1]+[m_2]=[C]$. By drawing the
corresponding arcs in $C$, it is easy to see that
$[a_1]+[a_2]-[m_1]=o(t_1,t_2,t_3)[C]$ and
$[a_3]+[a_4]-[m_2]=o(t_3,t_4,t_1)[C]$. So,
$$[a_1]+[a_2]+[a_3]-[a]=[a_1]+[a_2]+[a_3]+[a_4]-[C]=$$
$$=o(t_1,t_2,t_3)[C]+[m_1]+o(t_3,t_4,t_1)[C]+[m_2]-[C]=
o(t_1,t_2,t_3)[C]+o(t_3,t_4,t_1)[C].\eqno{_\blacksquare}$$

Applying Lemma \ref{holonomy lemma} to the naturally oriented circle
$\partial_\infty C_3$ and the points
$z_3',z_3,I_3z_3,J^{-1}z_3'\in\partial_\infty C_3$ (note that
$z_3'\ne J^{-1}z_3'$ always hold and one can assume that $z_3'\ne I_3z_3$)
we obtain
\begin{equation} \label{holonomy identity 5}
[b_1]+[b_2]+[b_3]=[b]+o(z_3',z_3,I_3z_3)[s]+o(I_3z_3,J^{-1}z_3',z_3')[s].
\end{equation}
In the naturally oriented circle $\partial_\infty C_1$ we have
\begin{equation}\label{holonomy identity 6}
[c_1]+[c]=[c_2]+o(I_1^{-1}z_1,Iz_1,z_1)[s]
\end{equation}
since $[\partial_\infty C_1]=[s]$. Therefore, it follows from \eqref{holonomy identity 4}, \eqref{holonomy identity 5}, and
\eqref{holonomy identity 6} that
$$[d]=[m_1]+[m_2']+[b]-[m_3']+[m_4']+[c]+o(z_3',z_3,I_3z_3)[s]+
o(I_3z_3,J^{-1}z_3',z_3')[s]-o(I_1^{-1}z_1,Iz_1,z_1)[s].$$
Hence, by \eqref{holonomy identity 1},
$$[d]=o(z_3',z_3,I_3z_3)[s]+o(I_3z_3,J^{-1}z_3',z_3')[s]-
o(I_1^{-1}z_1,Iz_1,z_1)[s],$$
that is,
$$f=o(z_3',z_3,I_3z_3)+o(z_3',I_3z_3,J^{-1}z_3')-o(z_1,I_1^{-1}z_1,Iz_1).$$

\newpage
\section{Toledo invariant}
\label{section Toledo invariant}
\begin{defi} \label{def toledo}
Let $\rho:G\to\PU$ be a $\PU$-representation of the turnover group $G$ and let $\phi:\HH_\RR^2 \to \HH_\CC^2$ be a $G$-equivariant map. The {\it Toledo invariant\/} of $\rho$ is defined by the formula \[\tau(\rho) = 4\cdot\frac{1}{2\pi} \int_P \phi^\ast\omega,\]
where $P$ is a fundamental domain in $\HH_\RR^2$ for the action of $G$ (see Section \ref{turnover definition} and Figure \ref{fundamental domain and orbifold}). 
\end{defi}
The number $\tau$ does not depend on the choice of the $G$-equivariant map $\phi$. For details about the Toledo invariant in the context of orbifolds, see \cite[Definition 35]{bot} and \cite{krebs}). The factor $4$ in our formula for the Toledo invariant comes from the fact that our metric is four times the usual one.

Let $\mathcal Q$ be the quadrangle associated to the representation $\rho$. We assume that it satisfies the quadrangle conditions in Section \ref{subsection quadrangle conditions}. In order to calculate the Toledo invariant of $\rho$, we introduce in $\mathcal Q$ several curves as illustrated in Figure \ref{toledo curve}. First, we define the oriented meridional curves
$$m_1\coloneq [c_1',c_2]\subset\B[C_1,C_2],\quad
m_2\coloneq [c_2,c_3']\subset\B[C_2,C_3],\quad
m_3^{-1}\coloneq I_3 m_2=[c_4,I_3c_3']\subset\B[C_3,C_4],$$
$$m_4^{-1}\coloneq I_1^{-1}m_1=[I_1^{-1}c_1',c_4]\subset\B[C_4,C_1],$$
with $c_1'\in C_1$ and $c_3'\in C_3$ (note that $c_4 = I_1^{-1}c_2 = I_3 c_2$). We also introduce the oriented geodesics
$$h_1\coloneq \G[c_1,c_1']\subset C_1,\quad
h_2\coloneq \G[c_3',c_3]\subset C_3,\quad
h_3^{-1}\coloneq I_3h_2=\G[I_3c_3',c_3]\subset C_3,$$
$$h_4^{-1}\coloneq I_1^{-1}h_1=\G[c_1,I_1^{-1}c_1']\subset C_1$$
\begin{figure}[H]
	\centering
	\includegraphics[scale=.6]{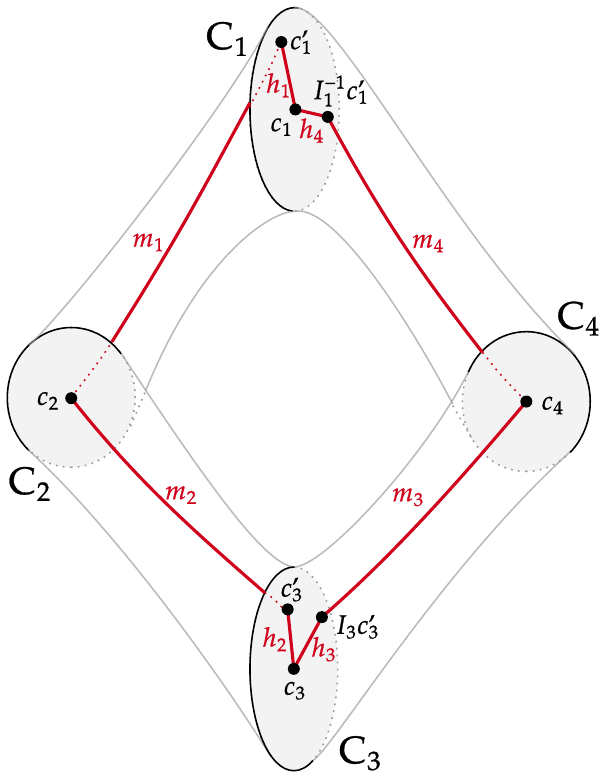}
	\caption{Curve $c$.}
	\label{toledo curve}
\end{figure}
\noindent
thus obtaining the closed oriented curve
\begin{equation}\label{toledo's curve}\zeta\coloneq h_1\cup m_1\cup m_2\cup h_2\cup h_3\cup m_3\cup m_4\cup h_4.
\end{equation}

Following the notation in Section \ref{quadrangle revisited}, let $\alpha_1,\beta_1,\gamma_1^{-1}$ be the eigenvalues of $I_1,I_2,I_3$ corresponding to negative eigenvectors. The proof below is similar to that of \cite[Proposition 2.7]{discbundles}. The strategy of the proof is the following. By Stokes theorem, the Toledo invariant of $\rho$ can be obtained by integrating a K\"ahler potential along $\zeta$ because the quadrangle conditions allow one to build a $G$-equivariant map $\HH_\RR^2\to\HH_\CC^2$ sending $\partial P$ to $\zeta$ (note that $\zeta$ is the boundary of a smooth disc inside the real $4$-ball $Q$). A potential for the K\"ahler is obtained by choosing a basepoint $c\in\HH_\CC^2$ as in formula \eqref{potential}. The boundary of $\zeta$ is made of meridional curves and geodesics. Since each of these curves is contained in a real plane, it follows from formula \eqref{potential} that the integral of a K\"ahler potential along the curve vanishes when we choose the basepoint $c$ in the curve (say, we can take $c$ as the starting point of the curve). So, the contributions to the Toledo invariant come from the changes of basepoints which are explicitly given in \eqref{basepointchange}.

\begin{prop}\label{toledomod2}
Let\/ $\rho:G\to\PU$, $g_j\mapsto I_j$, be a representation satisfying the quadrangle conditions \ref{subsection quadrangle conditions}. Then\/ $\tau\equiv\displaystyle\frac{\Arg(\alpha_1\beta_1\gamma_1^{-1})}{\pi}\mod2$, where\/ $\tau$ stands for the Toledo invariant of\/ $\rho$.
\end{prop}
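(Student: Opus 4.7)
The strategy is the one outlined in the paragraph preceding the statement: use Stokes' theorem to convert the Toledo integral into a line integral along the closed curve $\zeta$ defined in \eqref{toledo's curve}, then evaluate that line integral piecewise with the help of the change-of-basepoint formula \eqref{basepointchange}.

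First I would build a $G$-equivariant smooth map $\phi:\HH_\RR^2\to\HH_\CC^2$ taking the fundamental quadrilateral $P\subset\HH_\RR^2$ to a topological disc $D\subset Q$ with $\partial D=\zeta$. Each side of $P$ naturally decomposes into two subarcs, so the eight arcs $h_1,m_1,m_2,h_2,h_3,m_3,m_4,h_4$ of $\zeta$ are in bijective correspondence with these subarcs; the relations $h_3=I_3h_2^{-1}$, $m_3=I_3m_2^{-1}$, $m_4=I_1^{-1}m_1^{-1}$, $h_4=I_1^{-1}h_1^{-1}$ guarantee the $G$-equivariance of the boundary prescription, and $\phi$ is then extended equivariantly to all of $\HH_\RR^2$. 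Since $\omega=dP_c$ for any $c\in\HH_\CC^2$ by \eqref{potential}, Stokes' theorem yields
\[\int_P\phi^*\omega\;=\;\int_D\omega\;=\;\int_\zeta P_c,\]
and consequently $\tau=\dfrac{2}{\pi}\int_\zeta P_c$.

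Next I would compute $\int_\zeta P_c$ by choosing a different local basepoint on each arc. Each $h_i$ is a real geodesic inside $C_1$ or $C_3$ and therefore lies in a real $2$-plane through either of its endpoints; each $m_i$ lies inside a meridian of the ambient bisector, hence inside an $\RR$-plane containing every point of $m_i$. Taking as basepoint $c_i$ the starting endpoint of the $i$-th arc---namely $c_1,\,c_1',\,c_2,\,c_3',\,c_3,\,I_3c_3',\,c_4,\,I_1^{-1}c_1'$ respectively---the integrand \eqref{potential} becomes real-valued along $\zeta_i$, so $\int_{\zeta_i}P_{c_i}=0$. Applying \eqref{basepointchange} to rewrite each $P_{c_i}$ in terms of the single potential $P_{c_1}$,
\[\int_\zeta P_{c_1}\;=\;\sum_{i=1}^{8}\bigl(f_{c_1,c_i}(\mathrm{end}_i)-f_{c_1,c_i}(\mathrm{start}_i)\bigr),\]
an explicit sum of eight $\frac12\Arg$-expressions in Hermitian brackets.

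The final step is to simplify this sum modulo $\pi$. The identities $\langle c_k,I_k^{-1}x\rangle=\lambda_k\langle c_k,x\rangle$, with $\lambda_1=\alpha_1$, $\lambda_2=\beta_1$, $\lambda_3=\gamma_1^{-1}$ (which follow from $I_k$ being unitary with $I_kc_k=\lambda_kc_k$), together with $c_4=I_1^{-1}c_2$ and the analogous symmetries at the three mirrored arcs, force most of the Hermitian brackets in the sum to cancel in telescoping pairs. In particular, every bracket still depending on the auxiliary choices $c_1'\in C_1$ or $c_3'\in C_3$ should drop out (it must, since $\tau$ is invariant of such choices), leaving exactly the three phases $\Arg\alpha_1$, $\Arg\beta_1$, $\Arg\gamma_1^{-1}$. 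Hence $\int_\zeta P_{c_1}\equiv\frac12\Arg(\alpha_1\beta_1\gamma_1^{-1})\pmod{\pi}$, and multiplication by $\frac{2}{\pi}$ yields the congruence for $\tau$. The main obstacle is precisely the bookkeeping of these eight $\Arg$-terms, i.e., verifying that the auxiliary points truly cancel and that the residual phases combine to the asserted eigenvalue product; the $2\pi$-indeterminacy of $\Arg$ is what reduces the formula to a $\bmod\,2$ statement for $\tau$.
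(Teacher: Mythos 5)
Your proposal is correct and follows essentially the same route as the paper: Stokes' theorem reduces $\tau$ to $\frac{2}{\pi}\int_\zeta P_c$, each arc's contribution is killed by an arc-adapted basepoint lying on the corresponding $\RR$-plane, and the remaining change-of-basepoint terms $f_{\cdot,\cdot}$ are summed via \eqref{basepointchange}; the paper merely organizes this with the single global basepoint $c_2$, so that two of the integrals vanish outright and only six $\Arg$-terms remain. One small correction to your last step: the brackets involving the auxiliary points $c_1',c_3'$ do not literally cancel in telescoping pairs --- they survive as conjugate pairs $\langle x,y\rangle\langle y,x\rangle=|\langle x,y\rangle|^2$, which are positive reals and hence contribute nothing to the total argument, leaving exactly $\Arg(\alpha_1\beta_1\gamma_1^{-1})$ modulo $2\pi$ as claimed.
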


{\bf Proof.} Note that $\alpha_1\beta_1\gamma_1^{-1}$ is well-defined for $\rho$ because we assume the equality $I_3I_2I_1=1$ in $\SU$. We take the quadrangle of bisectors $\mathcal Q$ of $\rho$ described in Section~\ref{subsection quadrangle conditions}, the closed oriented curve $\zeta\subset\mathcal Q$ defined in \eqref{toledo's curve}, and the geodesic polygon $P\subset \HH_\RR^2$ defined in Section \ref{turnover definition}. Let $D\subset\HH_\CC^2$ be a disc with $\partial D=\zeta$ and let $\varphi:\HH_\RR^2\to\HH_\CC^2$ be a $\rho$-equivariant map such that $\varphi P=D$, $\varphi v_j=c_j$, and
$$\varphi e_1=h_1\cup m_1,\quad\varphi e_2=m_2\cup h_2,\quad\varphi e_3=h_3\cup m_3,\quad\varphi e_4=m_4\cup h_4$$
(see Figure \ref{fundamental domain and orbifold}). Then $\tau=\frac4{2\pi}\int_P\varphi^*\omega$, that is,
$$\tau=\frac2\pi\int_D\omega=\frac2\pi\int_{\partial D}P_{c_2}=\frac2\pi\sum_{j=1}^4\left(\int_{m_j}P_{c_2}+\int_{h_j}P_{c_2}\right),$$
where $P_{c_2}$ is a K\"ahler primitive with basepoint $c_2\in\HH_\CC^2$. The choice of the basepoint implies
$\int_{m_1}P_{c_2}=\int_{m_2}P_{c_2}=0$. The remaining integrals can be evaluated with the aid of the formula relating primitives based on distinct points:
$$J_1\coloneq \int_{h_2}P_{c_2}=\int_{h_2}(P_{c_2}-P_{c_3'})=\int_{h_2}\dd f_{c_2,c_3'}=$$
$$=\frac12\Arg\left(\frac{\langle c_2,c_3\rangle\langle c_3,c_3'\rangle}{\langle c_2,c_3'\rangle}\right)-\frac12\Arg\left(\frac{\langle c_2,c_3'\rangle\langle c_3',c_3'\rangle}{\langle c_2,c_3'\rangle}\right)=\frac12\Arg\left(\frac{\langle c_2,c_3\rangle\langle c_3,c_3'\rangle}{\langle c_2,c_3'\rangle}\right)-\frac\pi2;$$
$$J_2\coloneq \int_{h_3}P_{c_2}=\int_{h_3}(P_{c_2}-P_{c_3})=\int_{h_3}\dd f_{c_2,c_3}=\frac12\Arg\frac{\langle c_2,I_3c_3'\rangle\langle I_3c_3',c_3\rangle}{\langle c_2,c_3\rangle}-\frac12\Arg\frac{\langle c_2,c_3\rangle\langle c_3,c_3\rangle}{\langle c_2,c_3\rangle}=$$
$$=\frac12\Arg\left(\frac{\langle c_2,I_3c_3'\rangle\langle I_3c_3',c_3\rangle}{\langle c_2,c_3\rangle}\right)-\frac\pi2=\frac12\Arg\left(\gamma_1^{-1}\frac{\langle c_2,I_3c_3'\rangle\langle c_3',c_3\rangle}{\langle c_2,c_3\rangle}\right)-\frac\pi2.$$
Similarly, one obtains
$$J_3\coloneq \int_{m_3}P_{c_2}=\frac12\Arg\left(\beta_1\frac{\langle c_2,I_1^{-1}c_2\rangle\langle c_2,c_3'\rangle}{\langle c_2,I_3c_3'\rangle}\right)-\frac\pi2;\quad J_4\coloneq \int_{m_4}P_{c_2}=\frac12\Arg\left(\frac{\langle c_2,I_1^{-1}c_1'\rangle\langle c_1',c_2\rangle}{\langle c_2,I_1^{-1}c_2\rangle}\right)-\frac\pi2;$$ $$J_5\coloneq \int_{h_4}P_{c_2}=\frac12\Arg\left(\alpha_1\frac{\langle c_2,c_1\rangle\langle c_1,c_1'\rangle}{\langle c_2,I_1^{-1}c_1'\rangle}\right)-\frac\pi2;\quad J_6\coloneq \int_{h_1}P_{c_2}=\frac12\Arg\left(\frac{\langle c_2,c_1'\rangle\langle c_1',c_1\rangle}{\langle c_2,c_1\rangle}\right)-\frac\pi2.$$
We have $\tau=\displaystyle\frac2\pi\sum_jJ_j$. Calculating mod $2$, we multiply the arguments of every $\Arg$ function participating in the previous sum thus obtaining the result.\hfill$_\blacksquare$

\medskip

The Toledo invariant is in fact an invariant of representations $G\to\PU$; it does not depend on the quadrangle conditions. However, we only consider here representations satisfying such condition since discreteness is our main concern (see Section \ref{section discreteness}).

\begin{rmk}
There is another proof of Proposition \ref{toledomod2}.

Fix a point $x_0 \in \HH_\CC^2$. Consider the Dupont cocycle $\phi:\PU \times \PU\to \RR$ given by $$\phi(g,h) = \int_{[x_0,gx_0,ghx_0]} \frac{1}{\pi}\omega.$$ This cocycle defines an element in the Borel cohomology group $\mathrm H^2(\PU,\ZZ)$. See \cite[Lemma 5.2]{bui}, noting that we use the complex hyperbolic plane with holomorphic curvature~$-4$.

The map $f:\PU \to \RR/\ZZ$ given by $f(g) = \frac{\Arg(\langle g x_0, x_0 \rangle)}{2\pi}$
satisfies $\phi(g,h) = f(g)+f(h)-f(gh)$. Therefore, for an elliptic isometry $g$, the rotation number of $g$ is $\mathrm{rot}(g) = \frac{\Arg(\alpha)}{2\pi}$, where $\alpha$ is the eigenvalue corresponding to a negative eigenvector of $g$. To see this, we lift $f:\langle g \rangle \to \RR/\ZZ$ to $F:\langle g \rangle \to \RR$ satisfying $F(\mathrm{id})=0$ and compute $\mathrm{rot}(g) = \lim\limits_{n\to \infty} \frac{F(g^n)}{n}$.

Removing the three conic points of the sphere $\HH_\CC^1/G$, we obtain a $3$-punctured sphere $\Sigma_{0,3}$, which we can think of as a hyperbolic surface with finite volume. We denote by $r_1,r_2,r_3$ the parabolic elements in $\pi_1(\Sigma_{0,3})$  corresponding to loops around the punctures in the counterclockwise direction. Additionally, $\pi_1(\Sigma_{0,3})$ is a free group generated by $r_1,r_2$. The inclusion $\Sigma_{0,3} \hookrightarrow \HH_\CC^2/G$ induces the homomorphisms $\pi_1(\Sigma_{0,3}) \to G \to \PU$ given by $r_j \mapsto g_j^{-1} \mapsto I_j^{-1}$.
Consider a $G$-equivariant map $s:\HH_\CC^1 \to \HH_\CC^2$. We have
$$\frac{\tau}2 = \int_{\HH_\CC^1/G} \frac{1}{\pi}s^\ast\omega = \int_{\Sigma_{0,3}} \frac{1}{\pi}s^\ast\omega.$$

By \cite[Lemma 8.2]{biw}, we have
$$\int_{\Sigma_{0,3}} \frac{1}{\pi}s^\ast\omega = -\mathrm{rot}_\phi(I_1^{-1})- \mathrm{rot}_\phi(I_2^{-1})- \mathrm{rot}_\phi(I_3^{-1}) \mod 1,$$
$$\int_{\Sigma_{0,3}} \frac{1}{\pi}s^\ast\omega = \frac{\Arg(\alpha_1)}{2\pi}+ \frac{\Arg(\beta_1)}{2\pi}+\frac{\Arg(\gamma_1^{-1})}{2\pi} \mod 1,$$
$$\tau = \frac{\Arg(\alpha_1\beta_1 \gamma_1^{-1})}{\pi} \mod 2.$$

\end{rmk}

\newpage

\section{Explicit example with trivial Euler number}
\label{section:Explicit example with trivial Euler number}
\noindent
 We work with the standard model for the complex hyperbolic plane: consider the Hermitian form $$\langle z,w \rangle = - z_1\overline{w_1}+z_2\overline{w_2}+z_3\overline{w_3}$$ 
 on $\CC^3$ and the complex hyperbolic plane $\HH_\CC^2$, formed by points $[z_1,z_2,z_3]$ of the complex projective plane $\PP_\CC^2$ satisfying $-|z_1|^2+|z_2|^2+|z_3|^2<0$.

The parameters we set are 
$$n_1=n_2=n_3=5, \quad k_1=k_3=0, \quad k_2=2, \quad d=1,\quad s=2.4,\quad t=0.9,$$ 
taken from the Table \ref{table e/x =0, -1, 0.5 non-rigid}. They correspond to an example of a non-rigid representation whose associated disc orbibundle has trivial Euler number, as we will show.

The eigenvalues, as set in the beginning of Section \ref{section computational results}, are
{\footnotesize	
\begin{alignat*}{3}
&\alpha_1 = \exp\left(\frac{2 (n_1-k_1) \pi i}{3 n_1}\right), \quad
&&\alpha_2 = \exp\left(\frac{2 (n_1 -k_1 - 3) \pi i }{3 n_1}\right), \quad
&&\alpha_3 = \exp\left( \frac{2 (2 k_1 + n_1 +3) \pi i}{3 n_1}\right),\\ 
&\beta_1 = \exp\left(\frac{-2k_2 \pi i}{3 n_2}\right),\quad
&&\beta_2 = \exp\left(\frac{2 (-k_2 - 3) \pi i}{3 n_2}\right), \quad
&&\beta_3 = \exp\left( \frac{2 (2 k_2+3) \pi i)}{3 n_2}\right),\\
&\gamma_1^{-1} = \exp\left(\frac{2 (2 n_3 d - k_3) \pi i}{3 n_3}\right), \quad
&&\gamma_2^{-1} = \exp\left(\frac{2 (2 n_3 d - k_3 - 3) \pi i}{3 n_3}\right), \quad
&&\gamma_3^{-1} = \exp\left( \frac{2 (2 n_3 d + 2 k_3 + 3) \pi i)}{3 n_3}\right).
\end{alignat*}}%
In accordance with Remark \ref{remark alg}, define the following terms, which we use to compute $I_2$, $$\alpha_{ij} \coloneq  \alpha_i - \alpha_j, \quad \beta_{ij} \coloneq  \beta_i - \beta_j, \quad \gamma_{ij} \coloneq  \gamma_i - \gamma_j,$$
$$k\coloneq \frac1{\beta_{23}}\left(\gamma_1+\gamma_2+\gamma_3-\alpha_1(\beta_1+
\beta_2-\beta_3)-\beta_3(\alpha_2+\alpha_3)\right),$$
$$z\coloneq \displaystyle\frac{\beta_{13}}{\beta_{23}}\big(\alpha_{21}s+
\alpha_{31}t\big)+k,$$
$$M\coloneq \left[\smallmatrix\real\alpha_{21}&\real\alpha_{31}\\
\imag\alpha_{21}&\imag\alpha_{31}\endsmallmatrix\right].$$

The matrices $I_1, I_2$ are computed by the formulas
\begin{equation}
I_1=\left[\smallmatrix\alpha_1&0&0\\0&\alpha_2&0\\0&0&\alpha_3
\endsmallmatrix\right],\qquad
I_2=\left[\smallmatrix-v_1^2\beta_{23}+u_1^2\beta_{13}+\beta_3&v_1
\overline v_2\beta_{23}-u_2u_1\beta_{13}&v_1\overline
v_3\beta_{23}-u_3u_1\beta_{13}\\-v_1v_2\beta_{23}+u_1u_2\beta_{13}&
|v_2|^2\beta_{23}-u_2^2\beta_{13}+\beta_3&v_2\overline
v_3\beta_{23}-u_3u_2\beta_{13}\\-v_1v_3\beta_{23}+u_1u_3\beta_{13}&
\overline v_2v_3\beta_{23}-u_2u_3\beta_{13}&|v_3|^2\beta_{23}-u_3^2
\beta_{13}+\beta_3\endsmallmatrix\right],
\end{equation}
where we define $$u_1 \coloneq  \sqrt{1+s+t}, \quad u_2\coloneq \sqrt{s}, \quad u_3\coloneq \sqrt{t},$$
and $v_1,v_2,v_3$ is given by
$$v_2=\frac1{2v_1\sqrt{s(1+s+t)}}\big(-t|v_3|^2+(1+s+t)v_1^2+
s|v_2|^2+
i\sqrt\Delta\big),$$
$$v_3=\frac1{2v_1\sqrt{t(1+s+t)}}\big(-s|v_2|^2+(1+s+t)v_1^2+
t|v_3|^2-
i\sqrt\Delta\big).$$

In the above formula we use $$\Delta\coloneq 4v_1^2|v_2|^2s(1+s+t)-\big(-t|v_3|^2+(1+s+t)v_1^2+
s|v_2|^2\big)^2,$$
$$|v_2|^2\coloneq \imag(\alpha_{31}\overline z)/\det M,$$
$$|v_3|^2\coloneq -\imag(\alpha_{21}\overline z)/\det M,$$
$$v_1 \coloneq  \sqrt{1 + |v_2|^2+|v_3|^2}.$$

Note that in order to compute $v_2,v_3$ we must have the formulas defining $\Delta$, $|v_2|^2$, and $|v_3|^2$ non-negative. Once we have $I_1,I_2$, we define $I_3 \coloneq  I_1^{-1}I_2^{-1}$.

Using the described framework, we obtain the matrices
\begin{align*}
&I_1 =
\left[
\begin{smallmatrix}
   -0.5 + 0.8660254038i &\quad\,\,  0 &\quad\,\, 0 \\
    0 &\quad\,\,  0.6691306064 + 0.7431448255i &\quad\,\,  0 \\
    0 &\quad\,\,  0 &\quad\,\,  -0.9781476007 - 0.2079116908i \\
\end{smallmatrix}
\right],\\
&I_2=
\left[
\begin{smallmatrix}
     4.806086358 - 0.9638895607i &\,\, -4.070625549 - 0.2082686381i &\,\, -2.395307321 + 0.8226079388i\\
    3.683729972 + 0.0360116289i &\,\, -3.385353305 - 0.9825139687i &\,\, -1.430999775 + 0.3121734931i\\
    3.027105153 - 0.5413134207i &\,\, -2.276681096 - 0.06434808952i &\,\, -2.229750048 + 0.545144991i\\
\end{smallmatrix}
\right],\\
&I_3 =\left[\begin{smallmatrix}
    -1.568290333 - 4.644137659i &\,\, 1.873051971 + 3.172197922i &\,\, 1.044761403 + 2.892206673i \\
    2.569006381 - 3.164423233i &\,\, -1.535093338 + 3.173237958i &\,\, -1.475577052 + 1.734961052i \\
    -2.513993916 - 0.3066195865i &\,\, 1.464633515 + 0.007830170545i &\,\, 2.294366676 + 0.06964116243i \\
\end{smallmatrix}\right].
\end{align*}

They satisfy $I_3 I_2 I_1 = 1,\, I_1^5 =e^{-2\pi i/3},\, I_2^5 = e^{2\pi i/3},\, I_3^5 = e^{2\pi i/3}$ in $\SU$.

The positive eigenvectors $p_1,p_2,p_3$ for $I_1,I_2,I_3$ with respect to $\alpha_2,\beta_2,\gamma_2$ are
$$
p_1 =\left[
\begin{smallmatrix}
    0 \\
    1 \\
    0
\end{smallmatrix}\right],\quad
p_2 =\left[
\begin{smallmatrix}
   1.648291445 \\
    1.794969768 + 0.1092825754i \\
    0.6716832499 - 0.1784576984i \\
\end{smallmatrix}\right],  \quad
p_3 =\left[\begin{smallmatrix}
        1.662566196 + 0.7047909878i \\
    1.026420435 + 1.623659667i \\
    0.7556765638\\
\end{smallmatrix}\right],
$$
and $$p_4 \coloneq  I_1^{-1} p_2  = \left[\begin{smallmatrix}
    -0.8241457224 - 1.427462264i \\
    1.28228199 - 1.260798179i \\
    -0.6199019175 + 0.3142087697i \\
\end{smallmatrix}\right].
$$

Condition \eqref{Q1} ensuring that the complex geodesics $C_i=\PP(p^\perp) \cap \HH_\CC^2$ are pairwise ultraparallel is satisfied:

$$\ta(p_1,p_2)  \simeq 3.234, \quad \ta(p_2,p_4) \simeq 9.304, \quad \ta(p_2,p_3) \simeq 3.197.$$
Thus, we can connect the complex geodesics with segments of bisectors to construct the quadrangle of bisectors 
$$\mathcal Q = B[C_1,C_2] \cup B[C_2,C_3] \cup B[C_3,C_4] \cup B[C_4,C_1],$$
which will bound our fundamental domain.

Following the notation in \eqref{Q2} we have that the transversality and orientation conditions are met

$$1+2t^2s\epsilon_0- (\epsilon_0^2 t^2 + s^2 + t^2) \simeq 0.0251, \quad   1+2t^2s\epsilon_0- (\epsilon_0^2 s^2 + 2t^2) \simeq 3.476,  \quad \epsilon_1 \simeq -0.754,$$
$$1+2t'^2s\epsilon_0' - (\epsilon_0'^2 t'^2 + s^2 + t'^2) \simeq 0.303, \quad  1+2t'^2s\epsilon_0' -(\epsilon_0'^2 s^2 + 2t'^2) \simeq 3.575,  \quad \epsilon_1'\simeq-0.732$$
where $$t = \sqrt{\ta(p_1,p_2)}, \quad t'=\sqrt{\ta(p_3,p_2)}, \quad s=\sqrt{\ta(p_2,p_4)},$$ $$\epsilon_0+\epsilon_1i = \frac{\langle p_1,p_2 \rangle\langle p_2,p_4 \rangle\langle p_4,p_1 \rangle}{|\langle p_1,p_2 \rangle\langle p_2,p_4 \rangle\langle p_4,p_1 \rangle|}, \quad \epsilon_0'+\epsilon_1'i = \frac{\langle p_3,p_4 \rangle\langle p_4,p_2 \rangle\langle p_2,p_3 \rangle}{|\langle p_3,p_4 \rangle\langle p_4,p_2 \rangle\langle p_2,p_3 \rangle|}.$$

The first three equations of the six considered above mean that the triangle of bisectors $\Delta(C_1,C_2,C_4)$ is transversal and counterclockwise-oriented. The last three ones guarantee that the triangle of bisectors  $\Delta(C_3,C_4,C_2)$ is transversal and counterclockwise-oriented.

Now we must ensure that these two triangles are coupled suitably. More precisely, the conditions below ensure they are transversely adjacent. 

The conditions \eqref{q31}, \eqref{q32}, \eqref{q33} are satisfied via direct computation.
$$\left\vert \real\left( \frac{\langle p_3,p_1 \rangle \langle p_2,p_2 \rangle }{\langle p_3,p_2 \rangle \langle p_2,p_1 \rangle}\right) - 1 \right\vert -\sqrt{1 - \frac{1}{\ta(p_2,p_3)}}\sqrt{1 - \frac{1}{\ta(p_2,p_1)}} \simeq -0.129,$$
$$ \left\vert \real\left( \frac{\langle p_1,p_3 \rangle \langle p_4,p_4 \rangle }{\langle p_1,p_4 \rangle \langle p_4,p_3 \rangle}\right) - 1 \right\vert - \sqrt{1 - \frac{1}{\ta(p_4,p_1)}}\sqrt{1 - \frac{1}{\ta(p_4,p_3)}} \simeq -0.045,$$
$$\imag \left( \frac{\langle p_1,c_3 \rangle \langle c_3,p_2 \rangle}{\langle p_1,p_2 \rangle }\right) \simeq  0.713,$$
$$\imag \left( \frac{\langle p_4,c_3 \rangle \langle c_3,p_1 \rangle}{\langle p_4,p_1 \rangle }\right) \simeq 0.740.$$

The first of the four equations above ensures that $B[C_1,C_2]$ and $B[C_2,C_3]$ are transversal. The second equation does the same for $B[C_1,C_4]$ and $B[C_4,C_3]$. The last two equations guarantees that $c_3$, the negative eigenvector of $I_3$ corresponding to the eigenvalue $\gamma_1^{-1}$, is inside the sector $C_1$ defined by the triangle $\Delta(C_1,C_2,C_4)$. The explicit entries for $c_3$ are in Equation \eqref{eq: the value of c3}.

The above conditions imply that the quadrangle $\mathcal Q$ bounds a fundamental domain candidate for the action of $G(n_1,n_2,n_3)$.

The condition \eqref{q4} is satisfied by our choice of eigenvalues: 
$$\frac{\alpha_2}{\alpha_1} = \exp\left(\frac{-2\pi i}{n_1}\right), \quad\frac{\beta_2}{\beta_1} = \exp\left(\frac{-2\pi i}{n_2}\right), \quad\frac{\gamma_2^{-1}}{\gamma_1^{-1}} = \exp\left(\frac{-2\pi i}{n_3}\right).$$
This condition guarantees the tessellation around each vertex $C_i$ under the action of the group $G(n_1,n_2,n_3)$. From the Theorem \ref{teo: tessalation} we have a tessellation and, as a consequence, a complex hyperbolic orbifold. The structure of disc orbibundle is induced from the natural foliation by complex geodesics on the quadrangle, as it is discussed in Section \ref{section: Orbifold bundles and Euler number}.

Since all discreteness conditions are met, we have a disc orbibundle over $\SP^2(5,5,5)$.

Now we compute its Euler number via the formula outlined at the end of the Section \ref{subsection euler number} and to do so, we must compute the parameter $f$ following the steps in the Section \ref{subsection holonomy of the quadrangle}.

The triangle of bisector $\Delta(C_1,C_2,C_4)$ have holonomy $I$ with trace $|\tr(I)|=1.9244$ and the triangle of bisector $\Delta(C_3,C_4,C_2)$ have holonomy $J$ with trace $|\tr(J)| = 1.9159$. Therefore, both holonomies are elliptic. Since both triangles are counterclockwise oriented, they are L-elliptic, meaning that the holonomies rotate clockwise, and, as consequence, we can use any $z_1$ in our computation of the Euler number. The computation of these holonomies can be done using the formulas outlined in \cite[Lemma A.33]{discbundles}:
$$|\tr(I)| =\sqrt{2 (1+\epsilon_0)}\left(1-\frac{1+2 t_{12} t_{24} t_{41} \epsilon_0-t_{12}^2-t_{24}^2-t_{41}^2}{(t_{12}+1)(t_{24}+1)(t_{41}+1)}\right),$$
$$|\tr(J)| =\sqrt{2 (1+\epsilon_0')}\left(1-\frac{1+2 t_{34} t_{42} t_{23} \epsilon_0'-t_{34}^2-t_{42}^2-t_{23}^2}{(t_{34}+1)(t_{42}+1)(t_{23}+1)}\right),$$
where $t_{jk} =\sqrt{\ta(p_j,p_k)}$.

We choose
$z_1 = \left[\begin{matrix}1 & 0 & 1 \end{matrix}\right]^{\textrm{T}}$.
To construct the points $z_2,z_3,z_3',z_3'',z_4',z_5'$ described in the Section \ref{subsection holonomy of the quadrangle}, we need to compute reflections in the middle slice of each bisector segment defining the quadrangle. Middle slice for $B[C_1,C_2],B[C_2,C_3],B[C_3,C_4],B[C_4,C_1]$ have polar points $\mathfrak{m}_1,\mathfrak{m}_2,\mathfrak{m}_3,\mathfrak{m}_4$ given by
$$\mathfrak{m}_i = \frac{1}{\sqrt{2+2 \langle p_i,p_{i+1} \rangle}} \left( p_i + \frac{\langle p_i, p_{i+1}\rangle}{|\langle p_i, p_{i+1}\rangle|} p_{i+1} \right),$$
where we assume that $\langle p_i,p_i\rangle =1$. We have
\begin{alignat*}{2}
&\mathfrak{m}_1 = \left[\begin{smallmatrix}
    0.6967426286\\
    1.180669147 + 0.07188230542 i \\
    0.2839245175 - 0.07543513392 i
\end{smallmatrix}\right],\quad
&&\mathfrak{m}_2 = \left[\begin{smallmatrix}
    0.6209147951 + 0.9915030695 i \\
    0.2981752232 + 1.436781411 i \\
    0.3611646254 + 0.2914178221 i
\end{smallmatrix}\right],
\\&\mathfrak{m}_3 =\left[ \begin{smallmatrix}
0.3390781806 - 0.9381337155i \\
1.346757824 - 0.4086352516i \\
-0.05189725816 - 0.1078709913i
\end{smallmatrix} \right],\quad
&&\mathfrak{m}_4 = \left[\begin{smallmatrix}
    0.1746385463 - 0.6745010517 i \\
    1.182855316 \\
    -0.2799659635 - 0.08900927095 i
\end{smallmatrix}\right].
\end{alignat*}

We define $\mathfrak{m}_5$ to be the polar for the middle slice of the bisector $B[C_2,C_4]$:
$$\mathfrak{m}_5 = \frac{1}{\sqrt{2+2 \langle p_2,p_4 \rangle}} \left( p_2 + \frac{\langle p_2, p_4\rangle}{|\langle p_2, p_4\rangle|} p_4 \right).$$
$$
\mathfrak{m}_5 = \left[ \begin{smallmatrix}
0.2813718628 - 0.4967217499i \\
1.073868075 - 0.4119239074 i \\
0.02003505593 + 0.05125691089 i
\end{smallmatrix} \right].
$$
The reflection $R_i$ on the middle slice $\textbf{middle}_i$ is given by
$$x \mapsto -x + 2\langle x, \mathfrak{m}_i \rangle \mathfrak{m}_i$$

The points $z_2,z_3,z_3',z_3'', z_4',z_5'$, as described in Section \ref{subsection holonomy of the quadrangle} (see Figures \ref{meridionalcurvefigure} and \ref{fig: aux curves}), are given by
$$z_2=R_1z_1,\quad z_3=R_2 I_2z_2,\quad z_3'=R_2 z_2,\quad z_4'=R_5 z_2, \quad z_3''=R_3 z_4',\quad z_5'=R_4 z_4'.$$
\begin{alignat*}{2}
&z_2=\left[ \begin{smallmatrix}
-1.575255952 + 0.105117747i \\
-0.9856481166 + 0.1187792353i \\
-1.223037447 + 0.105117747i
\end{smallmatrix} \right],\quad
&&z_3=\left[ \begin{smallmatrix}
0.7841986264 + 0.2491608075i \\
0.5437649113 + 0.5867020173i \\
-0.04148256194 + 0.1882239487i
\end{smallmatrix} \right],\\
&z_3'=\left[ \begin{smallmatrix}
1.563008521 + 0.8399546493i \\
0.5593561185 + 0.9874435857i \\
1.340772987 + 0.2508448595i
\end{smallmatrix} \right],\quad
&&z_3''=\left[ \begin{smallmatrix}
-1.400737655 + 0.1542581335i \\
-0.8820775665 - 0.3309459089i \\
-1.028711762 + 0.2000690728i
\end{smallmatrix} \right],\\
&z_4'=\left[ \begin{smallmatrix}
1.755822676 + 0.8252418547i \\
0.07455119635 + 1.559221055i \\
1.142473276 - 0.1482026409i
\end{smallmatrix} \right],\quad
&&z_5'=\left[ \begin{smallmatrix}
-0.8556992719 - 0.6375475231i \\
0 \\
-1.042787938 - 0.2264539772i
\end{smallmatrix} \right].
\end{alignat*}

\begin{rmk}\label{cartan invariant} Given three pairwise distinct points $\xi_1,\xi_2,\xi_3 \in \HH_\CC^2 \cup \partial \HH_\CC^2$, we consider the number
$$\frac{\langle \xi_1,\xi_2 \rangle\langle \xi_2,\xi_3 \rangle\langle \xi_3,\xi_1 \rangle}{|\langle \xi_1,\xi_2 \rangle\langle \xi_2,\xi_3 \rangle\langle \xi_3,\xi_1 \rangle|}.$$
Note that it is unchanged under the change of representatives for the given points. Additionally, it never vanishes and its real part is the determinant of the Gram matrix
$(\langle \xi_i, \xi_j\rangle )$, which is negative when $\xi_1, \xi_2,\xi_3$ are not in the same complex geodesic and $0$ otherwise (see \cite[7.1 Cartan's angular invariant]{goldmanbook} for details). 

If $\xi_1,\xi_2,\xi_3$ are in the same complex geodesic, then, up to choosing representatives for $\xi_1,\xi_2,\xi_3$, we can assume that $\langle \xi_1, \xi_2 \rangle=1$ and $\xi_3 = (\xi_2-\xi_1)+ \omega (\xi_1+\xi_2)$ with $|\omega|=1$. In this representation, we obtain $\langle \xi_1,\xi_2 \rangle\langle \xi_2,\xi_3 \rangle\langle \xi_3,\xi_1 \rangle=-2i\imag(\omega) $. Therefore, if $\xi_1,\xi_2,\xi_3$ follow a cyclic order, then $\imag \langle \xi_1,\xi_2 \rangle\langle \xi_2,\xi_3 \rangle\langle \xi_3,\xi_1 \rangle$ is negative, otherwise, it is positive.

Thus, we can write
$$o(\xi_1,\xi_2,\xi_3)=\begin{cases} 
      1, \text{ if } \imag \langle \xi_1,\xi_2 \rangle\langle \xi_2,\xi_3 \rangle\langle \xi_3,\xi_1 \rangle>0,\\
      0, \text{ if } \imag \langle \xi_1,\xi_2 \rangle\langle \xi_2,\xi_3 \rangle\langle \xi_3,\xi_1 \rangle<0.
   \end{cases}$$
\end{rmk}
\medskip

Since
\begin{align*}
&\langle z_3',z_3 \rangle\langle z_3,I_3z_3 \rangle\langle I_3z_3,z_3' \rangle \simeq -0.387 i,\\
&\langle z_3',I_3 z_3 \rangle\langle I_3 z_3,z_3'' \rangle\langle z_3'',z_3' \rangle \simeq 0.163 i,\\
&\langle z_1,I_1^{-1} z_1 \rangle\langle I_1^{-1} z_1,z_5' \rangle\langle z_5',z_1 \rangle \simeq -0.457i,
\end{align*}
we have
$$o(z_3',z_3,I_3z_3)=0, \quad o(z_3',z_3,I_3z_3)=1, \quad o(z_1,I_1^{-1}z_1,z_5') =0,$$
and, as a consequence,
$$f=o(z_3',z_3,I_3z_3)+o(z_3',I_3z_3,z_3'')-o(z_1,I_1^{-1}z_1,z_5')=1.$$

Therefore, following Proposition \ref{prop Euler number}, we obtain the Euler number
$$e=f-\frac{k_1+1}{n_1}-\frac{k_2+1}{n_2}-\frac{k_3+1}{n_3} =1-\frac{1+3+1}5=0,$$
because
$$\frac{\alpha_3}{\alpha_1} = \exp\left(\frac{(k_1+1)\pi i}{n_1}\right), \quad\frac{\beta_3}{\beta_1} =  \exp\left(\frac{(k_2+1)\pi i}{n_2}\right),\quad \frac{\gamma_3^{-1}}{\gamma_1^{-1}} =  \exp\left(\frac{(k_3+1)\pi i}{n_3}\right).$$

The Toledo invariant can be computed from Proposition \ref{toledomod2}. Note that 
$$\alpha_1 \beta_1 \gamma_1^{-1} = \exp\left(-\frac{4\pi i}{15}\right),$$
$$\tau \equiv \frac1{\pi}\arg(\alpha_1 \beta_1 \gamma_1^{-1}) \equiv -\frac{4}{15} \mod 2.$$

On the other hand, $\chi =  - 2/5$. The Toledo rigidity states that $|\tau| \leq |\chi|$ and, as consequence, we must have $\tau = -4/15$.

Observe as well that $3\tau = 2\chi+2 e = - 4/5$. Alternatively, we can use the formulas in the proof of Proposition \ref{toledomod2} to compute $\tau$ directly, without using Toledo rigidity. Let us do so.

The negative eigenvectors $c_1,c_2,c_3$ for $I_1,I_2,I_3$ associated to the eigenvalues $\alpha_1,\beta_1,\gamma_1$ are

\begin{equation}\label{eq: the value of c3}
c_1=\left[ \begin{smallmatrix}
1 \\
0 \\
0
\end{smallmatrix} \right],\quad
c_2=\left[ \begin{smallmatrix}
2.073644135 \\
1.549193338 \\
0.9486832981
\end{smallmatrix} \right],\quad
c_3=\left[ \begin{smallmatrix}
2.222238961 + 1.252946098i \\
0.7345344135 + 1.665521923i \\
1.481457337
\end{smallmatrix} \right]
\end{equation}

Following the proof of Proposition \ref{toledomod2}, we must consider the points
$c_1' = R_1 c_2$, $c_3' = R_2 c_2$. 

$$c_1' = \left[\begin{smallmatrix}
-1.162802309 - 0.05545449422i \\
0\\
-0.583516738 - 0.1212131456i
\end{smallmatrix} \right],\quad c_3' = \left[\begin{smallmatrix}
-1.788443315 - 1.5119981 i \\
-0.5547823714 - 1.654060537 i\\
-1.03745005 - 0.6038753169 i
\end{smallmatrix} \right].$$

\begin{rmk} Consider two distinct points $\xi_1,\xi_2 \in \HH_\CC^2$. As noted in the Remark \ref{cartan invariant}, for $x \in \HH_\CC^2$ we have $$\frac{\langle \xi_1,x\rangle\langle x,\xi_2 \rangle\langle \xi_2,\xi_1 \rangle}{|\langle \xi_1,x\rangle\langle x,\xi_2 \rangle\langle \xi_2,\xi_1 \rangle|} \not \in \RR_{\geq 0}.$$

Now, consider the branch $\Arg:\CC \setminus \RR_{\geq 0} \to (0,2\pi)$ of the argument function. We conclude that the map
$$x \mapsto \Arg\left (\frac{\langle \xi_1,x\rangle\langle x,\xi_2 \rangle}{\langle \xi_1,\xi_2 \rangle}\right)$$
is well-defined and it is the function used to compute the Toledo invariant in the Proposition \ref{toledomod2} (now without thinking of it as a multivalued function).
\end{rmk}

Thus, from Proposition \ref{toledomod2} we have $\tau = \frac2{\pi}(J_1+J_2+J_3+J_4+J_5+J_6)$, which becomes
   \begin{align*}
    \tau=&\frac1 \pi \Arg\left(\frac{\langle c_2,c_3\rangle\langle c_3,c_3'\rangle}{\langle c_2,c_3'\rangle}\right)+\frac1 \pi\Arg\left(\gamma_1^{-1}\frac{\langle c_2,I_3c_3'\rangle\langle c_3',c_3\rangle}{\langle c_2,c_3\rangle}\right)+\frac1 \pi\Arg\left(\beta_1\frac{\langle c_2,I_1^{-1}c_2\rangle\langle c_2,c_3'\rangle}{\langle c_2,I_3c_3'\rangle}\right)+
    \\&+ \frac1 \pi\Arg\left(\frac{\langle c_2,I_1^{-1}c_1'\rangle\langle c_1',c_2\rangle}{\langle c_2,I_1^{-1}c_2\rangle}\right)+\frac1 \pi\Arg\left(\alpha_1\frac{\langle c_2,c_1\rangle\langle c_1,c_1'\rangle}{\langle c_2,I_1^{-1}c_1'\rangle}\right)+\frac1 \pi\Arg\left(\frac{\langle c_2,c_1'\rangle\langle c_1',c_1\rangle}{\langle c_2,c_1\rangle}\right) - 3 .
    \end{align*}

    And we obtain $\tau =-0.266666666\ldots = -4/15$.

    Therefore, we can compute the Toledo in two ways. It's also possible to prove that the identity $3\tau = 2e+2\chi$ holds for the described type of construction using quadrangles of bisectors (see the preprint \cite{bot2}). With that, we can also derive $e=0$ from the Toledo invariant.

\newpage


\begin{thebibliography}{90}
	
	
    \bibitem[AGG]{discbundles} Sasha Anan'in, Carlos H. Grossi, Nikolay Gusevskii, {\it Complex Hyperbolic Structures on Disc Bundles over Surfaces.} Int.~Math.~Res.~Not., Vol. 2011 {\bf19}, p.~4295--4375
		
	\bibitem[AGu]{agu} Sasha Anan'in, Nikolay Gusevskii, {\it Complex Hyperbolic Structures on Disc Bundles over Surfaces.~II. Example of a Trivial Bundle.} arXiv:math/0512406
	
	\bibitem[AGr1]{coordinatefree} Sasha Anan'in, Carlos H. Grossi, {\it Coordinate-Free Classic Geometries.} Mosc.~Math.~J., Vol.~11, Number 4, October-December 2011, p.~633--655
	
	\bibitem[AGr2]{poincareth} Sasha Anan'in, Carlos H. Grossi, {\it Yet another Poincaré Polyhedron Theorem.} Proc.~Edinb.~Math.~Soc., Vol. 54, 2011, p.~297--308

    \bibitem[Bot1]{bot} H. C. Botós. {\it Orbifolds and orbibundles in complex hyperbolic geometry.} Topology Appl. 341 (2024)
    \bibitem[Bot2]{bot2} H. C. Botós. {\it On the interplay between discrete invariants of complex hyperbolic disc bundles over surfaces.} arXiv:2308.04204

    \bibitem[BuI]{bui} M.~Burger, A.~Iozzi. {\it Bounded Kähler class rigidity of actions on Hermitian symmetric spaces.}, Ann. Sci. École Norm. Sup. (4), 37 (2004) 77–103. 
    \bibitem[BIW]{biw} M.~Burger, A.~Iozzi, A.~Wienhard, {\it Surface group representations with maximal Toledo invariant}, Ann. of Math. (2), 172 (2010) 517–566.
    \bibitem[CuG]{Gus} H.~Cunha, N.~Gusevski, {\it A note on trace fields of complex hyperbolic groups.} Groups Geom.~Dyn. {\bf8} (2014), 355--374
    
    \bibitem[Eli]{eli} Y. Eliashberg, {\it Contact 3-manifolds twenty years since J. Martinet’s work.} Ann.~Inst.~Fourier (Grenoble) {\bf42} (1992), No. 1--2, 165--192

	\bibitem[Gol1]{goldmanbook}	W. M. Goldman, {\it Complex Hyperbolic Geometry,} Oxford Mathematical Monographs. Oxford Science Publications. The Clarendon Press, Oxford University Press, New York, 1999
	
	\bibitem[Gol2]{gol2} W. M. Goldman, {\it Conformally flat manifolds with nilpotent holonomy and the uniformization problem for 3-manifolds.} Trans.~Amer.~Math.~Soc. 278 (1983), No. 2, 573--583
	
	\bibitem[GLT]{GLT} M. Gromov, H. B. Lawson Jr., W. Thurston, {\it Hyperbolic 4-manifolds and conformally flat 3-manifolds,} Inst.~Hautes Études Sci.~Publ. Math. {\bf68} (1988), 27--45
	
	\bibitem[GKL]{GKL} Goldman, W. M., M. Kapovich, and B. Leeb. {\it Complex hyperbolic manifolds homotopy equivalent to a Riemann surface.} Communications in Analysis and Geometry 9, no. 1 (2001): 61--95
	
	\bibitem[Kap1]{kap1} M. Kapovich, {\it On hyperbolic 4-manifolds fibered over surfaces.} Preprint, \url{http://www.math.utah.edu/~kapovich/eprints.html}

    \bibitem[Kap2]{kap2} M. Kapovich, {\it Lectures on complex hyperbolic Kleinian groups,}
    arXiv:1911.12806
	
	\bibitem[Krebs]{krebs}	M. Krebs, {\it Toledo invariants of 2-orbifolds and Higgs bundles on elliptic surfaces.} Michigan Math.~J. 56, no. 1 (2008): 3--27
	
	\bibitem[Sch]{sch} R. E. Schwartz, {\it Spherical CR Geometry and Dehn Surgery,} Ann.~of Math.~ Stud. Princeton Univ. Press, Princeton, NJ, 2007
	
    \bibitem[Sco]{sco} P. Scott. {\it The geometry of $3$-manifolds,} Bulletin of the London Mathematical Society 15, no. 5 (1983), 401--487

	\bibitem[Tol]{tol} D. Toledo, {\it Representations of surface groups in complex hyperbolic space,} J.~Differential Geom. 29, no. 1 (1989), 125--133

\end{thebibliography}
\end{document}